\definecolor{refkey}{rgb}{1,0,0}
\definecolor{labelkey}{rgb}{0,0,1}
\newtheorem{theorem}{Theorem}[section]
\newtheorem{lemma}[theorem]{Lemma}
\newtheorem{proposition}[theorem]{Proposition}
\newtheorem{corollary}[theorem]{Corollary}
\newtheorem{definition}[theorem]{Definition}
\newtheorem{remark}[theorem]{Remark}
\newtheorem{example}[theorem]{Example}
\newtheorem{introthm}{Theorem}
\newtheorem{intropro}[introthm]{Problem}
\newcommand{\mathcalOM}{\mathcal{C}^\infty_M}
\title{\texorpdfstring{$A_\infty$}{Aoo}-algebras from Lie pairs}
\dedicatory{Dedicated to the memory of our colleague and friend Kirill C.\ H.\ MacKenzie}
\thanks{Research partially supported by NSF grants DMS-2001599 and 
DMS-2302447.}
\author{Mathieu Stiénon}
\address{Department of Mathematics, Pennsylvania State University}
\email{stienon@psu.edu}
\author{Luca Vitagliano}
\address{Dipartimento di Matematica, Università di Salerno}
\email{lvitagliano@unisa.it}
\author{Ping Xu}
\address{Department of Mathematics, Pennsylvania State University}
\email{ping@math.psu.edu}
\begin{document}

\begin{abstract}
Given an inclusion $A\hookrightarrow L$ of Lie algebroids
sharing the same base manifold $M$, i.e.\ a Lie pair,
we prove that the space $\Gamma(\Lambda^\bullet A^\vee)\otimes_{R}
\frac{\mathcal{U}(L)}{\mathcal{U}(L)\cdot\Gamma(A)}$, where $R=C^\infty(M)$,
admits an $A_\infty$-algebra structure, unique up to $A_\infty$-isomorphisms.
As a consequence, the Chevalley–Eilenberg cohomology
 $H^\bullet_{\CE} \big( A, \frac{\mathcal{U}(L)}{\mathcal{U}(L)\cdot\Gamma(A)} \big)$
admits a canonical associative algebra structure.
This $A_\infty$-algebra can be considered
as the universal enveloping algebra of the $L_\infty$-algebroid
$A[1]\times_M L/A$.
Our construction is based on the homotopy equivalence of 
the $L_\infty$-algebroid $A[1]\times_M L/A$ and the 
dg Lie algebroid corresponding to the comma double Lie algebroid
of Jotz-Mackenzie.
\end{abstract}

\maketitle
\tableofcontents


\section{Introduction}

A \emph{Lie pair} $(L,A)$ is a pair consisting of two Lie algebroids
(over a field $\KK$ of characteristic zero)
over the same manifold $M$ together with a Lie algebroid inclusion
$A \hookrightarrow L$ (covering the identity map $\id_M$).
Lie pairs appear naturally in a variety of places in
classical areas of mathematics such as
complex geometry, in the theory of foliations, 
and the theory of $\frakg$-manifolds (manifolds equipped with
an infinitesimal action of a Lie algebra $\frakg$).
For instance, the tangent distribution $T_\cF$ to a foliation $\cF$
of a manifold $M$, together with the tangent bundle $T_M$
and the natural inclusion $T_{\cF}\hookrightarrow T_M$ is a Lie pair.

Given a Lie pair $(L,A)$, the quotient vector bundle $L/A$ admits a canonical flat
$A$-connection: the \emph{Bott connection} $\nabla^{\mathrm{Bott}}$ defined by
\[ \nabla^{\mathrm{Bott}}_a q(l) = q ([a,l]),
\quad a \in \Gamma(A), l \in \Gamma(L), \]
where $q : L \to L/A$ is the projection.
As a result, $L/A$ is an $A$-module and
$\Gamma(\Lambda^\bullet A^\vee \otimes L/A)$ is 
naturally a dg module over the commutative differential 
graded algebra (\dga)
$\big(\Gamma(\Lambda^\bullet A^\vee), d_A\big)$.
For instance, when $A = T_\cF$ is the tangent distribution to a foliation
$\cF$ of $M$, the dg module
$\Gamma(\Lambda^\bullet T^\vee_{\cF} \otimes T_M/T_{\cF})$
can be regarded as the space of vector fields transverse to the foliation $\cF$ or
the space of formal vector fields on the differentiable stack
presented by the holonomy groupoid of $\cF$.
In~\cite{MR3277952},
one of the authors proved that the space 
$\Gamma(\Lambda^\bullet T^\vee_\cF \otimes T_M/T_\cF)$ is indeed equipped
with an $L_\infty$-algebra structure (actually a homotopy Lie-Rinehart
algebra structure over $\Gamma(\Lambda^\bullet T^\vee_\cF)$),
unique up to $L_\infty$-isomorphisms.
More generally, for a generic Lie pair $(L,A)$,
the space $\Gamma(\Lambda^\bullet A^\vee \otimes L/A)$
is known to admit an $L_\infty$-algebra structure,
which is in fact a structure of homotopy Lie-Rinehart
algebra over $\Gamma (\Lambda^\bullet A^\vee)$,
unique up to $L_\infty$-isomorphisms 
\cite{MR4325718,MR4091493}.
That is, $A[1]\times_M L/A \to A[1]$
is an $L_\infty$-algebroid.
This leads to the following natural

\begin{intropro}
What is the universal enveloping algebra
of the $L_\infty$-algebroid $A[1]\times_M L/A \to A[1]$?
\end{intropro}

Baranovsky proposed a construction of universal enveloping algebra for $L_\infty$-algebras based on the cobar construction and homological perturbation theory \cite{MR2470385}.
However, as is well-known, there is no canonical
construction of universal enveloping algebra for
$L_\infty$-algebras.
As for $L_\infty$-algebr\emph{oids}, little is known concerning
universal enveloping algebras.

Given a Lie pair $(L,A)$ over a base manifold $M$,
the Lie algebroid $A$ acts on the universal enveloping
algebra of the Lie-Rinehart algebra $(\Gamma(L),R)$
from both the left and the right --- we use the symbol $R$
to denote the algebra $C^\infty(M)$ of smooth functions on 
$M$.
Multiplication by element of $\Gamma(A)$ and $R$
from the left in $\cU(L)$ induces a canonical 
$A$-module structure on the quotient
$\cU_{L/A}=\frac{\cU(L)}{\cU(L)\cdot\Gamma(A)}$.
This $A$-module structure extends in a natural way
the Bott connection. It follows that
$\Gamma(\Lambda^\bullet A^\vee) \otimes_\cirm \cU_{L/A}$
is naturally a differential graded (dg) module over
the \dga\ $\big(\Gamma(\Lambda^\bullet A^\vee),d_A\big)$. 
In the special case of a Lie pair
$(L, A)=(T_M,T_{\cF})$ arising from a foliation $\cF$ of $M$,
this dg module
can be understood as the space of ``differential operators
transverse to the foliation.''

Here is the rationale.

The second author proved in~\cite{MR3313214}
that the dg module
$\Gamma(\Lambda^\bullet T^\vee_{\cF})\otimes_\cirm
\cU_{T_M/T_{\cF}}$ admits an $A_\infty$-algebra structure,
which is unique up to $A_\infty$-isomorphisms.
Hence, its cohomology is canonically an associative algebra.
The proof is based on (1) the construction of a contraction from the dg module $D(T_{\cF}[1])$ of differential operators on the dg manifold $T_{\cF}[1]$ to the dg module
$\Gamma(\Lambda^\bullet T^\vee_{\cF})\otimes_\cirm
\cU_{T_M/T_{\cF}}$ and (2) the homotopy transfer theorem
for $A_\infty$-algebras;
the differential graded algebra structure on $D(T_{\cF}[1])$
transfers to an $A_\infty$-algebra structure on
$\Gamma(\Lambda^\bullet T^\vee_{\cF})\otimes_\cirm \cU_{T_M/T_{\cF}}$.

Furthermore, the $L_\infty$-algebroid structure
on $T_{\cF}[1]\times_M T_M/T_{\cF}\to T_{\cF}[1]$
described by the second author in~\cite{MR3277952}
was obtained by homotopy transfer from the tangent dg Lie 
algebroid of the dg manifold $T_{\cF}[1]$.
Therefore, since the dg algebra $D(T_{\cF}[1])$
is the universal enveloping algebra of the tangent Lie algebroid to the dg manifold $T_{\cF}[1]$,
the $A_\infty$-algebra $\Gamma(\Lambda^\bullet T^\vee_{\cF})
\otimes_\cirm \cU_{T_M/T_{\cF}}$ can be considered
as the universal enveloping algebra of the $L_\infty$-algebroid $T_{\cF}[1]\times_M T_M/T_{\cF}\to T_{\cF}[1]$,
i.e.\ the differential operators transverse to the foliation $\cF$.

It is natural to expect that a similar result holds
for all Lie pairs. More precisely, the dg module
$\Gamma(\Lambda^\bullet A^\vee) \otimes_\cirm \cU_{L/A}$
ought to be the substrate of the universal enveloping algebra
of the $L_\infty$-algebroid $A[1]\times_R L/A \to A[1]$.
This interpretation is also supported by the
fact that the Chevalley–Eilenberg cohomology
$H^\bullet_{\CE} (A,\cU_{L/A})$,
i.e.\ the cohomology of
$\big(\Gamma(\Lambda^\bullet A)\otimes_\cirm
\cU_{L/A},d_A\big)$ is a Lie subalgebra
of a canonical graded Lie algebra \cite{MR4325718}.

The initial step of our strategy,
which is motivated by the constructions 
in~\cite{MR3313214,MR3277952}, consists in constructing,
for a generic Lie pair $(L,A)$, a dg Lie algebroid
that plays a role analogous to the tangent bundle $T_\cM\to\cM$ to the dg manifold $\cM=T_{\cF}[1]$
in the foliation case $(L,A)=(T_M,T_{\cF})$.

Our first result is the following general construction.

\begin{introthm}[See~Theorem~\ref{Zug}]
\label{introthmB}
Let $A$ and $L$ be Lie algebroids over $\KK$ with the same base manifold $M$,
and let $\phi:A\to L$ be a Lie algebroid morphism.
Then the pull-back Lie algebroid
$\pi^! L\xto{\breve{\varpi}} A[1]$ of the Lie algebroid $L\to M$ (see~\cite{MR2157566})
through the surjective submersion $\pi: A[1]\to M$
admits a structure of \emph{dg} Lie algebroid
induced, in a canonical way, by $\phi$.
\end{introthm}

In particular, given any Lie pair $(L,A)$ over $\KK$,
the associated pull-back Lie algebroid
$\pi^! L\xto{\breve{\varpi}} A[1]$
automatically inherits a \emph{dg} Lie algebroid structure,
namely the one induced by the inclusion morphism
$A \hookrightarrow L$.
This dg Lie algebroid is the main object of study
in this paper. Its space of sections $\Gamma(\pi^! L)$
is a dg module over the \dga\
$\big( \Gamma(\Lambda^\bullet A^\vee), d_A\big)$. 
We will use the symbol $\sQ$ to denote the differential
on $\Gamma(\pi^! L)$. 

Next, we build up a
$\Gamma(\Lambda^\bullet A^\vee)$-linear contraction 
\begin{equation*}
\begin{tikzcd}
\big(\Gamma(\pi^! L),\sQ\big)
\arrow[r, "P_0", shift left] \arrow["H_0", loop left]
& \big(\Gamma(\Lambda^\bullet A^\vee\otimes L/A),\dA\big)
\arrow[l, "I_0", shift left]
\end{tikzcd}
\end{equation*}
from a choice of a splitting
of the short exact sequence of vector bundles
\begin{equation}\label{eq:splitting0}
0 \to A \to L \to L/A \to 0
.\end{equation}
This contraction enables us to homotopy
transfer the dg Lie algebroid structure on $\pi^! L$
to an $L_\infty$-algebroid structure on
$A[1]\times_M L/A \to A[1]$.
We prove the following

\begin{introthm}[See~Theorem~\ref{prop:L_infty_alg}]
\label{introthmC}
The $L_\infty$-algebroid structure on 
$A[1]\times_M L/A\to A[1]$ inherited from the dg Lie 
algebroid $\pi^! L\xto{\breve{\varpi}}A[1]$
by way of the homotopy transfer theorem for $L_\infty$-algebroids (see~Theorem~\ref{theor:ht_L_infty_alg})
coincides with the one described in~\cite[Proposition~4.1]{MR4091493}.
\end{introthm}

Thus we expect that one possible
candidate for the universal enveloping algebra of
the $L_\infty$-algebroid $A[1]\times_R L/A \to A[1]$ 
is $\Gamma(\Lambda^\bullet A^\vee)\otimes_R\cU_{L/A}$
endowed with an $A_\infty$-algebra structure obtained
by homotopy transfer from the differential graded
associative algebra structure on the universal enveloping 
algebra $\big(\cU(\pi^! L),D_\cU\Big)$
of the dg Lie algebroid $\pi^! L\xto{\breve{\varpi}}A[1]$.

To construct the desired contraction
from $\big(\cU(\pi^! L),D_\cU\Big)$ to
$\big(\Gamma(\Lambda^\bullet A^\vee)\otimes_R\cU_{L/A},
\dA\big)$,
we use the PBW type isomorphisms established in
\cite{MR4271478, MR3319134, MR3910470} to identify 
$\cU(\pi^! L)$ and $\Gamma(\Lambda^\bullet A^\vee)\otimes_R\cU_{L/A}$
with $\Gamma(S(\pi^! L))$ and $\Gamma \big( \Lambda^\bullet A^\vee\otimes S (L/A) \big)$, respectively,
as dg modules over the \dga\ $\big(\Gamma(\Lambda^\bullet A^\vee), d_A \big)$,
and then use the tensor trick and the homological perturbation lemma.
The homotopy transfer theorem for $A_\infty$-algebras then yields the following 

\begin{introthm}[See~Theorem~\ref{theor:main}]
\label{introthmD}
Let $(L,A)$ be a Lie pair.
\begin{enumerate}
\item Each choice of splitting of the short exact sequence
of vector bundles \eqref{eq:splitting0}, together with
some additional geometric data,
determines an $A_\infty$-algebra structure
on $\Gamma(\Lambda^\bullet A^\vee)\otimes_R\cU_{L/A}$,
whose unary operation is precisely the differential $\dA$.
\item The $A_\infty$-algebras arising from all possible
choices of splitting (and additional geometric data)
are all mutually isomorphic.
\item In particular, the cohomology
$H\big(\Gamma(\Lambda^\bullet A^\vee)\otimes_R\cU_{L/A},\dA\big)$
carries a canonical graded associative algebra structure.
\end{enumerate}
\end{introthm}

Let us summarize. Every Lie pair $(L,A)$ yields
(according to Theorem~\ref{introthmB})
a dg Lie algebroid structure on
$\pi^!L\xto{\breve{\varpi}}A[1]$,
which in turn yields (according to Theorem~\ref{introthmC})
an $L_\infty$-algebroid structure on
$A[1]\times_M L/A\to A[1]$ by homotopy transfer.
Furthermore, the differential graded associative algebra
structure on the universal enveloping algebra
of the dg Lie algebroid $\pi^!L\xto{\breve{\varpi}}A[1]$
yields (according to Theorem~\ref{introthmD})
an $A_\infty$-algebra structure on
$\Gamma(\Lambda^\bullet A^\vee)\otimes_R\cU_{L/A}$
again by homotopy transfer.
Thus, the $A_\infty$-algebra
$\Gamma(\Lambda^\bullet A^\vee)\otimes_R\cU_{L/A}$
can be thought of as
the universal enveloping algebra
of the $L_\infty$-algebroid
$A[1]\times_M L/A\to A[1]$.

As a further motivation for considering the $A_\infty$-algebra structure on $\Gamma(\Lambda^\bullet A^\vee)\otimes_R\cU_{L/A}$ first notice that, in 
various specific cases, its $0$-th cohomology $H^0$ has precise geometric meaning. For instance, we can consider
the following four special cases:
\begin{enumerate}
\item Let $(L,A) = (T_M,T_{\cF})$ be the Lie pair arising
from a \emph{simple} foliation $\cF$
on a manifold $M$.
Then $H^0$ is the space of differential operators on the leaf-space $M/\cF$.
\item Let $(L,A) = (\frakg,\frakh)$, where $\frakg$
is the Lie algebra of a connected Lie group $G$ and
$\frakh$ is the Lie subalgebra of $\frakg$ corresponding
to a closed subgroup $H$ of $G$.
Then $H^0$ is the space of $G$-invariant differential operators on the homogeneous space $G/H$.
\item Let $(L,A) = (T_{\CC}X,T^{0,1}X)$ be the (complex) Lie pair
corresponding to a complex manifold $X$.
This means that $T_{\CC} X = TX \otimes \CC$
is the complexified tangent bundle
and $T^{0,1} X \subseteq T_{\CC} X$
is the bundle of antiholomorphic tangent vectors (i.e.\ $T^{0,1}X$
is the $-i$-eigenbundle of the almost complex structure). Then $H^0$ is the space of holomorphic differential operators on $X$.
\item Let $(L,A)=((\frakg\ltimes M)\bowtie TM,\frakg\ltimes M)$
be the Lie pair arising from a $\frakg$-manifold $M$.
That is, a Lie algebra $\frakg$ acts on a manifold $M$;
$\frakg\ltimes M$ is the action Lie algebroid;
$(\frakg\ltimes M,TM)$ is the corresponding matched pair of Lie algebroids; and we consider the resulting
inclusion of Lie algebroids
$\frakg\ltimes M\hookrightarrow(\frakg\ltimes M)\bowtie TM$
over the base manifold $M$.
In this case $H^0$ is the space of $\frakg$-invariant
differential operators on $M$.
\end{enumerate}

In all four cases above, the associative multiplication
in $H^0$ is exactly the one induced
by the $A_\infty$-algebra structure. The $A_\infty$-algebra 
does of course contain much more information than just
its $0$-th cohomology.
Moreover, in more singular situations,
e.g.\ when the foliation $\cF$ of the first special case
above is non-simple, the $A_\infty$-algebra can be seen
as a regular non-acyclic resolution of its ``singular''
$0$-th cohomology, and as an appropriate replacement
for the latter.

As is well-known, the universal enveloping algebra
of a finite dimensional Lie algebra $\frakg$ gives rise to
a deformation quantization of the linear Poisson structure
$\frakg^\vee$ \cite{MR1747916}.
Provided a connection is chosen, the same idea works
for the linear Poisson structure $A^\vee$ corresponding
to a Lie algebroid $A$ --- see~\cite{MR1687747}.
Analogously, we expect that the $A_\infty$-algebra structure on
$\Gamma(\Lambda^\bullet A^\vee)\otimes_R\cU_{L/A}$
would give rise to a flat deformation quantization of
the linear $P_\infty$-structure on $A [1]\times (L/A)^\vee$
corresponding to the $L_\infty$-algebroid
$A[1]\times_R L/A\to A[1]$. Note that it is still an open
question to determine when a given $P_\infty$-structure admits
a flat deformation quantization \cite{MR2304327}.

The geometric meaning of the dg Lie algebroid of Theorem~\ref{introthmB} is explored in Appendix~\ref{Luca}
where we link it with the comma double Lie algebroid.
Every Lie algebroid morphism $\phi: A\to L$ determines a comma \emph{double} Lie algebroid,
whose explicit construction is due to Jotz-Mackenzie~\cite{MR4322148} (see also \cite[Theorem~2.16]{MR4421028}).
If $\KK$ is the field $\RR$ of real numbers,
it is the infinitesimal counterpart of the comma double Lie groupoid
of Brown--Mackenzie~\cite[Example~1.8]{MR1170713}
(see also \cite[Example~2.5]{MR1174393}).
According to Voronov~\cite{MR2971727}, a double Lie algebroid
in the sense of Mackenzie~\cite{arXiv:math/9808081,MR1650045} gives rise naturally to a dg Lie algebroid. 
In Appendix, we establish the following

\begin{introthm}[See~Theorem~\ref{thm:Cortona}]
Let $A$ and $L$ be Lie algebroids over $\KK$ with the same base manifold $M$,
and let $\phi:A\to L$ be a Lie algebroid morphism.
Then the dg Lie algebroid corresponding to the comma double Lie algebroid coincides with the one in Theorem~\ref{introthmB}.
\end{introthm}

As a consequence of this connection to double Lie groupoids
when working over $\KK=\RR$,
the $A_\infty$-algebra structure on
$\Gamma(\Lambda^\bullet A^\vee)\otimes_R\cU_{L/A}$
arising from a Lie pair $(L,A)$ ought to admit
a geometric interpretation in terms of higher stacks,
which is worthy of further exploration in the future.

Finally, some remarks are in order.
After the initial version of the present paper was posted,
the constructions set forth in it have been applied
successfully to other undertakings.
Recently, Liao~\cite{MR4665716} proved that the Atiyah class 
\cite{MR3319134} of the dg Lie algebroid
$\pi^! L\xto{\breve{\varpi}}A[1]$ is indeed isomorphic
to the Atiyah class of the Lie pair $(L,A)$ \cite{MR3439229}.
In fact, one can prove \cite{BSSX} that the Kapranov $L_\infty[1]$-algebra $\Gamma(A[1];\pi^! L)$
associated to the dg Lie algebroid $\pi^! L\xto{\breve{\varpi}} A[1]$
is quasi-isomorphic to the Kapranov $L_\infty[1]$-algebra
$\Gamma \big(\Lambda^\bullet A^\vee \otimes L/A \big)$
of the Lie pair $(L,A)$.
In~\cite{MR4325718},
the authors considered the dg module
$\Gamma(\Lambda^\bullet A^\vee)\otimes_\cirm\cD_{\mathrm{poly}}^\bullet$
of formal polydifferential operators on the Lie pair $(L,A)$,
where $\cD_{\mathrm{poly}}^\bullet=\bigoplus_{k\geq 0}\cU_{L/A}^{\otimes k}$
(the tensor products are over $\cirm$),
and showed that its cohomology carries a canonical Gerstenhaber algebra structure.
The $L_\infty$-algebra structure on cochains inducing the Gerstenhaber algebra structure on cohomology
is unique up to $L_\infty$-isomorphisms.
For degree reasons, the direct summand
$\Gamma(\Lambda^\bullet A^\vee)\otimes_\cirm\cU_{L/A}$
of $\Gamma(\Lambda^\bullet A^\vee)\otimes_\cirm
\cD_{\mathrm{poly}}^\bullet$ is preserved by the $L_\infty$-brackets.

\section{Dg Lie algebroids associated to Lie algebroid morphisms}
\label{doudou}

In this paper, the symbol $\KK$ denotes either of
the fields $\RR$ and $\CC$, and $R=C^\infty (M)$.

A \emph{$\ZZ$-graded manifold} $\cM$ over $\KK$
consists of a smooth manifold $M$
together with a sheaf $\mathfrak{A}$ of $\ZZ$-graded commutative
$\mathcalOM$-algebras over $M$ such that there exist a covering
$\{U_i\}_{i\in I}$ of $M$ by open submanifolds $U_i\subset M$
and a family of isomorphisms
$\mathfrak{A}(U_i)\cong\mathcal{O}_M(U_i)\otimes_\KK{S}(V^\vee)$,
where $V$ is a fixed $\ZZ$-graded vector space over the field $\KK$,
$V^\vee$ denotes the $\KK$-dual of $V$,
and ${S}(V^\vee)$ denotes the $\KK$-algebra of polynomial functions on $V$.
The manifold $M$ is called the support of the graded manifold $\cM$.
By $C^\infty(\cM)$, we denote the $\ZZ$-graded $\KK$-algebra $\mathfrak{A}(M)$
of global sections of the sheaf $\mathfrak{A}$.
See~\cite{MR2709144,MR2275685,MR2819233}.

A \emph{dg manifold} is a $\ZZ$-graded manifold $\cM$ endowed with a
homological vector field, i.e.\ a degree $+1$ derivation $Q$ on
$C^\infty(\cM)$ satisfying $\lie{Q}{Q}=0$.

Dg Lie algebroids are Lie algebroid objects in the category of dg manifolds.
In the $C^\infty$-context,
they were first introduced by Mehta~\cite{MR2534186},
who called them $Q$-algebroids.
First, recall that a Lie algebroid object in the category of $\ZZ$-graded
manifolds consists of a vector bundle object $\cA\to\cM$ of $\ZZ$-graded
manifolds together with a degree $0$
bundle map $\rho:\cA\to T_{\cM}$, called anchor,
and a structure of graded Lie algebra on $\Gamma(\cA)$
with Lie bracket satisfying
\[ \lie{X}{f\cdot Y} = \big(\rho(X)f\big)\cdot Y
+ (-1)^{\degree{X}\degree{f}} f\cdot \lie{X}{Y} \]
for all homogeneous $X,Y\in\Gamma(\cA)$ and $f\in C^\infty(\cM)$.
According to a well known theorem of Va\u{\i}ntrob~\cite{MR1480150},
the Chevalley--Eilenberg differential
\[ d_{\cA}:\Gamma(\Lambda^\bullet\cA^\vee)
\to\Gamma(\Lambda^{\bullet+1}\cA^\vee) \]
of the Lie algebroid $\cA\to\cM$
can be seen as a homological vector field on $\cA[1]$
so that $( \cA[1], d_{\cA})$ is a dg manifold.

Now assume that $\cA\to\cM$ is a dg vector bundle
--- see~\cite{MR2709144} for the notion of dg vector bundle.
The homological vector field $Q_{\cA}$ on $\cA$
is \emph{linear} with respect to the vector bundle structure
on $\cA\to\cM$. In other words, the derivation $Q_{\cA}$
of $C^\infty(\cA)$ stabilizes the subspace
$\Gamma(\cA^\vee)$ of fiberwise linear functions
on $\cA$.
Therefore the shift functor $[1]$ makes sense \cite{MR2534186}.

By $\cQ$, we denote the homological vector field
on $\cA[1]$ obtained from $Q_{\cA}$ by applying the shift functor $[1]$.
The following definition is due to Mehta \cite{MR2534186}.

\begin{definition}
A dg Lie algebroid consists of a dg vector bundle
$\cA\to\cM$, with homological
vector fields on $\cA$ and
$\cM$ denoted $Q_\cA$ and $Q_\cM$, respectively, and a Lie algebroid structure
on the vector bundle $\cA\to\cM$ such that the dg
and the Lie structures are compatible in the sense
that the Chevalley--Eilenberg differential $d_\cA$
of the Lie algebroid structure on $\cA\to\cM$
and the homological vector field $\cQ$ on $\cA[1]$
induced by $Q_\cA$ ---
two derivations of the graded algebra
$\Gamma(\Lambda^\bullet\cA^\vee)\cong C^\infty(\cA[1])$
--- commute:
\[ \lie{d_\cA}{\cQ}=0 .\]
\end{definition}

As a consequence, the pair of differentials $d_{\cA}$ and $\cQ$
make the algebra $\Gamma(\Lambda^\bullet\cA^\vee)$
--- which is doubly graded by the `$\bullet$-degree'
and the `internal' degree --- a double complex.
While the Lie algebroid differential $d_\cA$ raises
the $\bullet$-degree by $1$,
the differential $\cQ$ raises the internal degree by $1$.
Following Mehta~\cite{MR2534186},
we call the total cohomology of this double complex
the dg Lie algebroid cohomology of $\cA$.

The following proposition describes an efficient way of
producing dg Lie algebroids.

\begin{proposition}\label{Prato}
Let $\cA\to\cM$ be a Lie algebroid object
in the category of $\ZZ$-graded manifolds
with anchor map $\rho:\cA\to T_\cM$.
If $s\in\Gamma(\cA)$ is a section of degree $+1$
of $\cA\to\cM$ satisfying $\lie{s}{s}=0$,
then $\cA\to\cM$ admits a structure of dg Lie algebroid.
\end{proposition}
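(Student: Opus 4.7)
My plan is to produce the two homological vector fields $Q_\cM$ and $Q_\cA$ directly from the Maurer--Cartan section $s$, and then verify the compatibility axiom between $d_\cA$ and the shifted field $\cQ$ through Cartan calculus on $\Gamma(\Lambda^\bullet \cA^\vee)$.

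First, I would set $Q_\cM := \rho(s)$. Since $\rho$ is of degree $0$ and $s$ is of degree $+1$, this is a degree $+1$ vector field on $\cM$. Because $\rho$ restricts to a morphism of graded Lie algebras $\Gamma(\cA) \to \Gamma(T_\cM)$, one obtains
\[ [Q_\cM, Q_\cM] \;=\; [\rho(s),\rho(s)] \;=\; \rho([s,s]) \;=\; 0, \]
so $Q_\cM$ is homological. Next, define $D_s : \Gamma(\cA) \to \Gamma(\cA)$ by $D_s(X) := [s,X]$. The Leibniz rule for a graded Lie algebroid yields
\[ D_s(fX) \;=\; Q_\cM(f)\cdot X \,+\, (-1)^{|f|}\, f \cdot D_s(X) \]
for homogeneous $f\in C^\infty(\cM)$, so $D_s$ is a degree $+1$ derivation of the $C^\infty(\cM)$-module $\Gamma(\cA)$ covering $Q_\cM$. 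By the standard correspondence between linear vector fields on a graded vector bundle and pairs consisting of a vector field on the base together with a covering derivation of sections, this datum determines a unique degree $+1$ linear vector field $Q_\cA$ on the total space $\cA$. Applying graded Jacobi to $[s,s]=0$ yields $D_s\circ D_s = \tfrac12 [[s,s],-]=0$, whence $[Q_\cA,Q_\cA]=0$, so that $\cA\to\cM$ becomes a dg vector bundle.

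It remains to check the compatibility $[d_\cA,\cQ] = 0$. Under the identification $C^\infty(\cA[1]) \cong \Gamma(\Lambda^\bullet \cA^\vee)$, I would identify $\cQ$ with the Lie derivative $\mathcal{L}_s := [d_\cA, \iota_s]$: both are degree $+1$ graded derivations of $\Gamma(\Lambda^\bullet \cA^\vee)$ which agree on the generating subspaces $C^\infty(\cM)$, where each acts as $\rho(s)=Q_\cM$, and $\Gamma(\cA^\vee)$, where both dualise $D_s$ up to a sign. Once this identification is in place, the sought identity follows formally from the graded Jacobi identity on commutators,
\[ 2\,[d_\cA,\cQ] \;=\; 2\,[d_\cA,[d_\cA,\iota_s]] \;=\; [[d_\cA,d_\cA],\iota_s] \;=\; 0, \]
using $d_\cA^2 = 0$.

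The main obstacle, though really of a bookkeeping nature, lies in the identification $\cQ = \mathcal{L}_s$ after the shift: it requires carefully unwinding the sign conventions built into the shift functor in the graded setting and verifying that the two derivations coincide on a generating set of $\Gamma(\Lambda^\bullet \cA^\vee)$. Everything else—both the homological property and the Cartan-style commutation with $d_\cA$—reduces to a twofold application of graded Jacobi to the defining equations $[s,s]=0$ and $[d_\cA,d_\cA]=0$.
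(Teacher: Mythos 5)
Your proposal is correct and follows essentially the same route as the paper: take $Q_\cM=\rho(s)$ and $\cQ=[s,\argument]$, identify $\cQ$ with the Lie derivative $\liederivative{s}$ by checking agreement on the generating subspaces $C^\infty(\cM)$ and $\Gamma(\cA^\vee)$, and conclude $[d_\cA,\cQ]=0$ by Cartan calculus. The only difference is that you spell out a few steps (the graded Jacobi arguments for $[Q_\cM,Q_\cM]=0$ and $D_s^2=0$, and the identity $\liederivative{s}=[d_\cA,\iota_s]$) that the paper leaves implicit.
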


\begin{proof}
From the assumption that $\lie{s}{s}=0$,
it follows that $Q_{\cM}=\rho (s)$ is a homological vector field on $\cM$.
Furthermore, it is routine to see that the operator of degree $+1$
\[ \cQ=\lie{s}{\argument} : \Gamma(\cA)\to \Gamma(\cA) \]
makes $\Gamma(\cA)$ into a dg module over the \dga
 $(C^\infty(\cM),Q_{\cM})$.
Hence $\cA$ acquires a dg manifold structure such that,
according to~\cite[Lemma~1.5]{MR3319134}, $\cA\to \cM$ is a dg vector bundle.
Indeed the algebra $C^\infty(\cA)$ is smoothly generated by its two subspaces $\pi^* C^\infty(\cM)$
and $\Gamma(\cA^\vee)\cong C^\infty_{\operatorname{linear}}(\cA)$ ---
the fiberwise constant and the fiberwise linear functions respectively --- and we have
\begin{gather}
Q_{\cA}(\pi^* f) = \pi^* (Q_{\cM} f) ,\qquad\forall f\in C^\infty(\cM); \label{eq:f} \\
\duality{Q_{\cA}(\varepsilon)}{e} = Q_{\cM}\big(\duality{\varepsilon}{e}\big)
-(-1)^{\degree{\varepsilon}}\duality{\varepsilon}{\lie{s}{e}}
,\qquad\forall\varepsilon\in \Gamma(\cA^\vee), \ e\in\Gamma(\cA) . \label{eq:linear}
\end{gather}
Equations~\eqref{eq:f} and~\eqref{eq:linear} imply that
the differential $\cQ$ on $\Gamma(\Lambda^\bullet\cA)$,
which raises the internal degree by $1$,
coincides with the Lie derivative $\liederivative{s}$ w.r.t.\ the section $s$
on the two subspaces $C^\infty(\cM)$ and $\Gamma(\cA^\vee)$.
It follows from the Leibniz rule that
\[ \cQ=\liederivative{s} \]
as derivations of the whole algebra $\Gamma(\Lambda^\bullet\cA^\vee)$.
Hence we have \[ [d_{\cA},\cQ]=[d_{\cA},\liederivative{s}]=0 \]
and $\cA\to\cM$ is a dg Lie algebroid.
This concludes our proof.
\end{proof}

Next we apply Proposition~\ref{Prato} to a special situation
arising from a Lie algebroid morphism.
Let $A \xto{\pi} M$ and $L \xto{\varpi} M$ be Lie algebroids over $\KK$
with the same base manifold $M$. Let $\phi:A\to L$ be a bundle map
over the identity map $\id_M:M\to M$.
Consider the pull-back of the Lie algebroid $L\to M$ (see~\cite{MR2157566})
through the surjective submersion $\pi: A[1]\to M$:
\[ \pi^! L\xto{\breve{\varpi}} A[1] .\]

As a manifold, $\pi^! L$ is canonically identified
to the fibered product $T_{A[1]}\times_{T_M}L$.
The projection $\breve{\varpi}$ is the composition
of canonical maps $T_{A[1]}\times_{T_M}L\to T_{A[1]}\to A[1]$.
The anchor is the canonical projection map
\[ \pi^! L \cong T_{A[1]}\times_{T_M}L\ni (X,\nu)\longmapsto X\in T_{A[1]} .\]

A section of $\pi^! L\xto{\breve{\varpi}} A[1]$ is a pair $(X,\nu)$,
called \emph{characterizing pair}, consisting of a vector field $X\in \XX(A[1])$
and a map $\nu:A[1]\to L$ covering the identity map $\id_M$ such that the 
diagram
\[ \begin{tikzcd}
A[1] \arrow[r, "X"] \arrow[d, "\nu"'] & T_{A[1]} \arrow[d, "\pi_*"] \\
L \arrow[r, "\rho_L"'] & T_M
\end{tikzcd} \]
commutes.

A section of the Lie algebroid $\pi^! L\xto{\breve{\varpi}}A[1]$
characterized by a pair $(X,\nu)$ is said to be \emph{$\pi$-related}
to a section $v$ of the Lie algebroid $L\xto{\varpi}M$ if
(1) $\nu=v\circ\pi$ and (2) the vector field $X\in\XX(A[1])$
is $\pi$-related to the vector field $\rho_L(v)\in\XX(M)$.
The $C^\infty(A[1])$-module
of sections of $\pi^! L\xto{\breve{\varpi}}A[1]$ is generated
by those sections which are $\pi$-related to sections of $L\xto{\varpi}M$.

A section of the Lie algebroid $\pi^! L\xto{\breve{\varpi}}A[1]$
is said to be \emph{null} if it is $\pi$-related to the zero section of
the Lie algebroid $L\xto{\varpi}M$. In other words, a null section
is characterized by a pair $(X,0)$ such that $\pi_* X=0$.

The Lie bracket on $\Gamma(\pi^! L\xto{\breve{\varpi}}A[1])$ is determined by the
Lie bracket on $\Gamma(L\xto{\varpi}M)$ through the relation
\begin{equation}\label{eq:PEK}
\big[(X, v\circ\pi),(X', v'\circ\pi)\big]=\big( \lie{X}{X'}, \ \lie{v}{v'}\circ\pi \big),
\end{equation}
which holds for all pairs of sections of $\pi^! L$
that are $\pi$-related to sections of $L$.

More generally,
according to \cite[Equation~(20) and Section~4.2]{MR2157566},
the Lie bracket of two homogeneous characterizing pairs
$(X,\nu)$ and $(X',\nu')$ is given by
\begin{equation}\label{eq:BRU}
\big[(X,\nu),(X',\nu')\big]=\big(\lie{X}{X'}, X(\nu')-(-1)^{\degree{X'}\degree{\nu}}X'(\nu)
+\lie{\nu}{\nu'}\big)
.\end{equation}
Here $X(\nu')-(-1)^{\degree{X'}\degree{\nu}}X'(\nu)
+\lie{\nu}{\nu'}$ is an abuse of notation and really means
\begin{equation}\label{eq:PEK12}
\sum_i X(\xi'_i)\otimes l'_i
-(-1)^{\degree{X'}\degree{\nu}} \sum_i X'(\xi_i)\otimes l_i
+\sum_{i,j}(-1)^{\degree{l_i}\degree{\xi'_j}} (\xi_i\wedge\xi'_j)
\otimes \lie{l_i}{l'_j}
\end{equation}
if $\nu=\sum_i \xi_i\otimes l_i$ and $\nu'=\sum_j \xi'_j\otimes l'_j$
with $\xi_i,\xi'_j\in\Gamma(\Lambda^\bullet A^\vee)$
and $l_i,l'_j\in\Gamma(L)$.

Let $\boldphi$ be the composition $A[1]\to A\xto{\phi}L$.
Essentially, $A[1]\xto{\boldphi}L$ is simply the bundle map $\phi$ regarded as a
bundle of two-term complexes over $M$ with $A$ placed in degree $-1$ and $L$ placed in degree $0$.
Now think of the Chevalley--Eilenberg differential $\dA$ as a vector field on
$A[1]$. In this way, we obtain a pair $(\dA,\boldphi)$.

\begin{lemma}\label{lem:s_phi}
The pair $(\dA,\boldphi)$ characterizes a section $s_{\boldphi}$ of $\pi^! L\xto{\breve{\varpi}} A[1]$
if and only if $\rho_L\circ\phi=\rho_A$.
Furthermore, this section $s_{\boldphi}$ satisfies
$\lie{s_{\boldphi}}{s_{\boldphi}}=0$
if and only if the bundle map $\phi: A\to L$ is a Lie algebroid morphism.
\end{lemma}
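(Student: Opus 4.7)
\emph{Proof proposal.}
The plan is to verify each of the two equivalences separately by unwinding the relevant definitions, with the bracket formula~\eqref{eq:BRU}--\eqref{eq:PEK12} doing most of the work for the second equivalence.

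For the first equivalence, I would note that $(\dA,\boldphi)$ is a characterizing pair exactly when the square in the definition of characterizing pair commutes, that is, when $\pi_*\circ\dA=\rho_L\circ\boldphi$ as maps $A[1]\to T_M$. Working in a local frame $\{e_\alpha\}$ of $A$ with dual fiber coordinates $\xi^\alpha$ on $A[1]$ (of degree $+1$), the Chevalley--Eilenberg vector field takes the form
\[ \dA = \rho^i_\alpha(x)\,\xi^\alpha\,\partial_{x^i} - \tfrac{1}{2}\,c^\gamma_{\alpha\beta}(x)\,\xi^\alpha\xi^\beta\,\partial_{\xi^\gamma}, \]
where $\rho_A(e_\alpha)=\rho^i_\alpha\partial_{x^i}$ and $[e_\alpha,e_\beta]_A=c^\gamma_{\alpha\beta}e_\gamma$. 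Hence $\pi_*\dA$ evaluated at a point $\xi\in A[1]$ is $\rho_A(\xi)\in T_M$, while $\rho_L\circ\boldphi$ at that point is $\rho_L(\phi(\xi))$. The square therefore commutes if and only if $\rho_A=\rho_L\circ\phi$.

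For the second equivalence, I would apply~\eqref{eq:BRU} to $X=X'=\dA$ and $\nu=\nu'=\boldphi$. Since $\boldphi$, viewed as an element of $\Gamma(A^\vee)\otimes_{C^\infty(M)}\Gamma(L)$, has total degree $+1$ (as does $\dA$), the sign $(-1)^{\degree{X'}\degree{\nu}}$ equals $-1$ and the formula specializes to
\[ [s_{\boldphi},s_{\boldphi}] = \big([\dA,\dA],\ 2\,\dA(\boldphi)+[\boldphi,\boldphi]\big). \]
Because $\dA$ is a homological vector field, $[\dA,\dA]=2\dA^2=0$, so $[s_{\boldphi},s_{\boldphi}]=0$ amounts to the Maurer--Cartan equation
\[ \dA\boldphi + \tfrac{1}{2}[\boldphi,\boldphi]=0. \]

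To finish, I would identify this Maurer--Cartan equation with the bracket compatibility of $\phi$. Writing $\boldphi=\sum_\alpha \xi^\alpha\otimes\phi(e_\alpha)$ and expanding both $\dA\boldphi$ and $[\boldphi,\boldphi]$ using~\eqref{eq:PEK12} (together with $\dA\xi^\alpha=-\tfrac{1}{2}c^\alpha_{\beta\gamma}\xi^\beta\xi^\gamma$ read off from the local expression above), the coefficient of $\xi^\alpha\wedge\xi^\beta$ with $\alpha<\beta$ in the Maurer--Cartan equation is precisely
\[ [\phi(e_\alpha),\phi(e_\beta)]_L - \phi([e_\alpha,e_\beta]_A). \]
Its vanishing for all $\alpha<\beta$, combined with the anchor identity $\rho_A=\rho_L\circ\phi$ obtained in the first part, is the defining condition for $\phi$ to be a Lie algebroid morphism over $\id_M$. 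The only delicate point is the careful tracking of signs and factors of $2$ in~\eqref{eq:BRU}--\eqref{eq:PEK12} when isolating the Maurer--Cartan equation; otherwise the argument is purely formal.
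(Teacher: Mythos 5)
Your proposal is correct and follows essentially the same route as the paper: the anchor condition is checked by comparing $\pi_*\circ\dA$ and $\rho_L\circ\boldphi$ on (differentials of) functions, and the bracket condition is reduced in a local frame to the vanishing of $\lie{\phi(e_\alpha)}{\phi(e_\beta)}-\phi(\lie{e_\alpha}{e_\beta})$. The only cosmetic difference is that you invoke the general formula~\eqref{eq:BRU} and package the result as a Maurer--Cartan equation for $\boldphi$, whereas the paper decomposes $s_{\boldphi}=\sum_i\xi^i\cdot(X_i,\phi(e_i)\circ\pi)$ and applies~\eqref{eq:PEK} together with the Leibniz rule; the resulting computation is the same.
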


\begin{proof}
For any $f\in\cirm$, by definition of $d_A$, we have
\[ \duality{\pi_* \circ \dA}{df}=\rho^*_A (df)\in \Gamma(A^\vee) ,\]
and
\[ \duality{\rho_L \circ \phi}{df}= \big(\phi^*\circ \rho^*_L\big) (df) \in \Gamma(A^\vee) .\]
Therefore $s_{\boldphi}$ is a section of $\pi^! L\xto{\breve{\varpi}} A[1]$
if and only if $\rho_L \circ \phi=\rho_A$.

Choose local coordinates $(x^1,\cdots,x^n)$ on $M$
and a local frame $(e_1,\cdots,e_m)$ for $A$.
They induce a local chart $(x^1,\cdots,x^n;\xi^1,\cdots,\xi^m)$ on $A[1]$.

We have
\[ \rho_A (e_i) =\sum_j a_i^j (x)\field{x}{j} \qquad\text{and}\qquad
\lie{e_i}{e_j} =\sum_k c^k_{ij} (x) e_k ,\]
where the $a_i^j (x)$ and $c^k_{ij} (x)$ are functions of the local coordinates $x^1,\dots,x^n$.
Then, in the local chart $(x^1,\cdots,x^n,\xi^1,\cdots,\xi^m)$ on $A[1]$,
the homological vector field $\dA$ reads
\begin{equation}
\label{eq:dA}
 \dA=\sum_{i,j} a_i^j (x)\xi^i \field{x}{j}
-\frac{1}{2}\sum_{i,j,k} c^k_{ij}(x) \xi^i\xi^j
\frac{\partial}{\partial \xi^k}=\sum_i \xi^i X_i ,
\end{equation}
where
\begin{equation} 
\label{eq:Xi} 
X_i=\sum_j a_i^j (x)\field{x}{j}-\frac{1}{2}\sum_{j,k} c^k_{ij} (x)\xi^j
\frac{\partial}{\partial \xi^k} .
\end{equation}
On the other hand, we have $\boldphi=\sum\xi^i\cdot\phi(e_i)\circ\pi$ and thence
\begin{equation}
\label{eq:sphi}
s_{\boldphi}=\sum_i\xi^i\cdot(X_i,\phi(e_i)\circ\pi) .
\end{equation}
From $\rho_L \circ \phi=\rho_A$, it follows that, for each $i$,
the pair $(X_i,\phi(e_i)\circ\pi)$ characterizes a section of
$\pi^! L\xto{\breve{\varpi}}A[1]$.
Using Equation~\eqref{eq:PEK} and the Leibniz rule, we obtain
\begin{align*}
\lie{s_{\boldphi}}{s_{\boldphi}}
& =\lie{\sum_i\xi^i(X_i,\phi(e_i)\circ\pi)}{\sum_j\xi^j(X_j,\phi(e_j)\circ\pi)} \\
&=\sum_{i,j}\xi^i\xi^j(0,\big(\lie{\phi(e_i)}{\phi(e_j)}
-\sum_k c^k_{ij}(x)\phi(e_k)\big)\circ\pi)
\end{align*}
The conclusion thus follows.
\end{proof}

As an immediate consequence, we have the following

\begin{theorem}
\label{Zug}
Let $A$ and $L$ be Lie algebroids over $\KK$ with the same base manifold $M$,
and let $\phi:A\to L$ be a Lie algebroid morphism.
Then the pull-back Lie algebroid
$\pi^! L\xto{\breve{\varpi}} A[1]$ is canonically
a dg Lie algebroid.
\end{theorem}

In particular, as an application, we have the following

\begin{corollary}
\label{Zug1}
For any given Lie pair $(L,A)$ over $\KK$, the pull-back Lie algebroid
$\pi^! L\xto{\breve{\varpi}} A[1]$ is canonically
a dg Lie algebroid.
\end{corollary}

\begin{remark}
According to Voronov~\cite{MR2971727}, \emph{double Lie algebroids}
in the sense of Mackenzie~\cite{arXiv:math/9808081,MR1650045} naturally
give rise to dg Lie algebroids.
There is such a double Lie algebroid picture
behind the dg Lie algebroid of Theorem~\ref{Zug}.
Indeed, in the case that $\KK$ is $\RR$,
any Lie algebroid morphism arises from a morphism of local Lie groupoids.
Given any morphism between Lie groupoids with the same base manifolds,
a classical construction due to Brown--Mackenzie \cite[Example~1.8]{MR1170713}
(see also \cite[Example~2.5]{MR1174393})
produces a double Lie groupoid, called \emph{comma double Lie groupoid}.
The infinitesimal counterpart of this comma double Lie groupoid
is a double Lie algebroid in the sense of Mackenzie~\cite{arXiv:math/9808081,MR1650045,MR4322148}
and the dg Lie algebroid to which it gives rise (in the sense of Voronov's theorem
\cite{MR2971727}) is precisely the dg Lie algebroid of Theorem~\ref{Zug}.
Thus, in the case of a morphism of real Lie algebroids,
we obtain a geometric interpretation of the dg Lie algebroid of Theorem~\ref{Zug},
and, in particular, of our main construction
of the $A_\infty$-algebra in Section~\ref{sec:A_infty_LP}
in terms of differentiable stacks.
See Appendix~\ref{Luca} for more details.
Note that, when $\KK$ is the field $\CC$ of complex numbers,
the double Lie groupoid no longer exists but the associated dg Lie algebroid
(or double Lie algebroid) still makes sense.
Finally we note that comma double Lie algebroids also appeared
in \cite[Theorem~2.16]{MR4421028}.
\end{remark}

\section{\texorpdfstring{$L_\infty$}{Loo}-algebroids from Lie pairs}
Recall that a \emph{Lie pair} is a pair $(L,A)$ consisting of a Lie algebroid $\varpi : L\to M$
and a Lie subalgebroid $A\subseteq L$ with the same base manifold $M$.
The algebra $C^\infty(M)$ of smooth functions on $M$ will be denoted by the symbol $\cirm$.

First of all, recall that, given a Lie pair $(L,A)$, the quotient vector bundle $L/A$
is canonically equipped with a flat $A$-connection customarily called \emph{Bott connection}
\cite{MR0362335,MR3439229}.
The Bott connection $\nabla^{\mathsf{Bott}}$ is defined by the relation
\[ \nabla^{\mathsf{Bott}}_e \big(q(v)\big) = q\big([e,v]\big) ,
\qquad \forall e \in \Gamma (A), \ v \in \Gamma (L) ,\]
and induces a differential on $\Gamma(\Lambda^\bullet A^\vee\otimes L/A)$
abusively denoted $\dA$ as it turns
$\left(\Gamma(\Lambda^\bullet A^\vee\otimes L/A),\dA\right)$ into a dg module
over the \dga\ $\left(\Gamma(\Lambda^\bullet A^\vee),\dA\right)$.
Here the symbol $q$ denotes the canonical projection $L\to L/A$.
Notice that the $\Gamma(\Lambda^\bullet A^\vee)$-module $\Gamma(\Lambda^\bullet A^\vee\otimes L/A)$
is the module of sections of the pull-back vector bundle \[ A[1] \times_M L/A\to A[1] ,\]
and the differential \[ \dA:\Gamma(\Lambda^\bullet A^\vee\otimes L/A)
\to\Gamma(\Lambda^{\bullet+1} A^\vee\otimes L/A) \] determines a dg vector bundle structure
on $A[1] \times_M L/A \to A[1]$.

\subsection{A contraction}

Consider the pull-back Lie algebroid
$\breve{\varpi} : \pi^! L\to A[1]$ as in Corollary~\ref{Zug1}.
The inclusion $i : A \to L$ is a Lie algebroid morphism.
So, according to Lemma~\ref{lem:s_phi},
\begin{equation}\label{Arago}
s_{\boldi}=(\dA,\boldi), \qquad\text{with}\qquad \boldi:A[1]\to A\xto{i}L
,\end{equation}
is a section of $\pi^! L\to A[1]$ satisfying $[s_{\boldi},s_{\boldi}]=0$.
Hence, according to Proposition~\ref{Prato}, $\breve{\varpi} : \pi^! L \to A[1]$
is a dg Lie algebroid with $\sQ=[s_{\boldi},\argument]$
as the operator of degree $+1$ on $\Gamma(A[1];\pi^! L)$.

In the sequel, we will denote the quotient $L/A$ simply by $B$ and
the projection $L\to L/A$ by $q$.
Once and for all, we pick a splitting $j : B \to L$ of the short exact sequence
\begin{equation}\label{eq:splitting1}
\begin{tikzcd}
0 \arrow[r] & A \arrow[r, "i"] & L \arrow[r, "q"] & B \arrow[r] & 0 .
\end{tikzcd}
\end{equation}
It determines another short exact sequence of vector bundles
\begin{equation}\label{eq:splitting2}
\begin{tikzcd}
0 & A \arrow[l] & L \arrow[l, "p"] & B \arrow[l, "j"] & 0 \arrow[l]
\end{tikzcd}
\end{equation}
in the usual way, i.e.\ $p : L \to A$ is the projection with kernel $j(B)$.

Next, we introduce three bundle maps $P_0$, $I_0$, and $H_0$.
While the operator $P_0$ is canonical,
the operators $I_0$ and $H_0$ depend on the chosen splitting $j$.

\subsubsection{The surjective map $P_0$}

It is convenient to identify $\Gamma(\Lambda^\bullet A^\vee\otimes B)$
with the module of sections of the dg vector bundle $A[1]\times_M B\to A[1]$.
In turn, sections of $A[1]\times_M B\to A[1]$ can be identified
with smooth maps $A[1] \to B$ covering the identity map $\id_M$.
We will adopt this identification in what follows.

We define a bundle map
\[ P_0: \pi^! L\to A[1]\times_M B \]
over $A[1]$ by the relation
\begin{equation}
P_0(X,\nu) := (\alpha, q \circ \nu ), \qquad\forall(X,\nu)\in\pi^! L|_\alpha
\cong T_{A[1]}|_\alpha \times_{T_M}L|_m, \ \text{where } \alpha\in A[1]|_m .
\end{equation}
It induces a surjection on the spaces of sections, denoted by
the same symbol by abuse of notations, 
\[ P_0 : \Gamma(\pi^! L) \to \Gamma(\Lambda^\bullet A^\vee\otimes B) .\]
More explicitly, expanding $\nu \in \Gamma( \pi^! L)$ as a linear combination
$\nu=\sum_k\xi_k\otimes v_k$ with $\xi_k\in\Gamma(\Lambda^\bullet A^\vee)$
and $v_k\in\Gamma (L)$, we obtain
\[ P_0(X,\nu) := \sum_k\xi_k\otimes q (v_k) .\]

\subsubsection{The injective map $I_0$}
\label{defIo}

Using the splitting \eqref{eq:splitting2}, one can define a map
\[ \Delta:\Gamma(B)\times\Gamma(A)\to\Gamma(A) \]
by the relation \[ \Delta_{b} a=p \lie{j(b)}{i(a)} \]
--- see \cite[Section~3.4.1]{MR3439229}.
This map $\Delta$ satisfies the identities
\begin{equation}
\label{eq:Prague}
\Delta_{f b}a=f \Delta_{b}a
\qquad\text{and}\qquad
\Delta_{b}(fa)=\rho_{j(b)}(f)\cdot a+f\Delta_b a ,
\end{equation}
for all $f\in\cirm$, $b\in\Gamma(B)$, and $a\in\Gamma(A)$.

Fixing the element $b$ of $\Gamma(B)$ and then dualizing, we obtain an operator
$\Delta_{b}:\Gamma(A^\vee)\to\Gamma(A^\vee)$, which we extend,
by the Leibniz rule, to a derivation $\Delta_{b}$ of the algebra
$\Gamma(\Lambda^\bullet A^\vee)$.
Thus $\Delta_{b}$ is a vector field of degree $0$ on $A[1]$.

Indeed, every section $b\in\Gamma(B)$ determines
a derivation of $C^\infty(A[1])$:
\[ C^\infty(A[1])\xfrom{i^\top}
C^\infty(L[1])\xfrom{\liederivative{j(b)}}
C^\infty(L[1])\xfrom{p^\top}C^\infty(A[1]) ,\]
i.e.\ a vector field on $A[1]$,
and a straightforward computation yields
$i^\top\circ\liederivative{j(b)}
\circ p^\top=\Delta_b$.
Here, $i^\top$ and $p^\top$ are the morphisms of algebras
induced by the maps $i:A\to L$ and $p:L\to A$.

By virtue of its construction, the vector field $\Delta_b$
is $\pi_*$-projectable and
\[ \pi_*(\Delta_b)=\rho_{j(b)} .\]
Hence, the pair $\big(\Delta_b,j(b)\circ\pi\big)$ characterizes a section
of $\pi^!L\to A[1]$ which is $\pi$-related to the section $j(b)$ of $L\to M$.

The map $\Gamma(B)\ni b\mapsto\Delta_{b}\in\XX(A[1])$
can be upgraded to a $\Gamma(\Lambda^\bullet A^\vee)$-linear map
\[ \Gamma(\Lambda^\bullet A^\vee\otimes B)\ni\beta\longmapsto
X_\beta\in\XX(A[1]) \]
as follows.
To an element $\beta=\sum_k\xi_k\otimes b_k$ of
$\Gamma(\Lambda^\bullet A^\vee)\otimes_R\Gamma(B)
\cong\Gamma(\Lambda^\bullet A^\vee\otimes B)$,
with $\xi_k\in\Gamma(\Lambda^\bullet A^\vee)$ and $b_k\in\Gamma(B)$,
we assign the derivation $X_\beta$ of $\Gamma(\Lambda^\bullet A^\vee)$ defined by
\begin{equation}\label{eq:SFO}
X_{\beta}(\eta)=\sum_k\xi_k\wedge\Delta_{b_k}(\eta),
\quad\forall\eta\in\Gamma(\Lambda^\bullet A^\vee).
\end{equation}
It is easy to check that $\pi_*\circ X_\beta=\rho\circ\nu_\beta$,
where $\nu_\beta$ is the composition
\[ \nu_\beta:A[1]\xto{\beta}B\xto{j}L .\]
Hence, the pair $(X_\beta,\nu_\beta)$ characterizes
a section of $\pi^!L\to A[1]$.

We define an injective bundle map

\begin{equation}
\label{eq:I0}
I_0 : A[1]\times_M B\to \pi^! L \cong T_{A[1]}\times_{T_M}L
\end{equation}
over $A[1]$ by the relation
\begin{equation}
\label{eq:I00}
I_0 : (\alpha, b)\to (\nabla_b|_\alpha, j (b), ),\ \ \forall (\alpha, b)
\in A[1]\times_M B
\end{equation}
Equation~\eqref{eq:Prague} implies that this is indeed a well-defined
bundle map. It induces an embedding, on the space of sections,
denoted by the same symbol by abuse of notations,
\[ I_0 : \Gamma(\Lambda^\bullet A^\vee\otimes B)\to\Gamma(\pi^! L) \]
by the relation
\[ I_0(\beta) := (X_\beta,\nu_\beta) .\]
We note that $\nu_\beta=\sum_i\xi_i\otimes j(b_i)
\in\Gamma(\Lambda^\bullet A^\vee\otimes L)$ if $\beta=\sum_i\xi_i\otimes b_i$.
It follows immediately from $q\circ j=\id_B$ that $P_0 I_0=\id$.

\subsubsection{The homotopy operator $H_0$}

We define a degree $(-1)$-bundle map
\[ H_0: \pi^! L\to \pi^! L \]
over $A[1]$ by the relation
\begin{equation} 
H_0(X,\nu) := \big(\iota_{p\circ\nu},0\big),\qquad \forall(X,\nu)\in
T_{A[1]}\times_{T_M}L ,
\end{equation}
where $X\in T_{A[1]}|_{\alpha}$, $\alpha\in A[1]|_m$ and $\nu\in L|_m$,
with $m\in M$,
satisfying the condition $\pi_* (X)=\rho_L (\nu)$.
Here $p : L \to A$ is the projection map, and
$\iota_{p\circ\nu}\in T_{A[1]}|_{\alpha}$
is the degree $(-1)$-tangent vector
corresponding to $p\circ\nu$.
On the level of spaces of sections, it induces an operator
$H_0 : \Gamma(\pi^! L) \to \Gamma(\pi^! L)[-1]$, which we
will describe in more details below.

Consider a section $v$ of $\pi : A \to M$. The contraction
$\iota_v : \Gamma (\Lambda^\bullet A^\vee) \to \Gamma(\Lambda^\bullet A^\vee)[-1]$
is a degree $-1$ derivation, i.e.~a degree $-1$ vector field on $A[1]$.
The correspondence $v \mapsto \iota_v$ is a $\cirm$-linear map
that we denote simply $\iota : \Gamma (A) \to \XX (A[1])[-1]$.
Extending it $\Gamma(\Lambda^\bullet A^\vee)$-linearly, we obtain a map
\[ \Gamma(\pi^\ast A) \cong C^\infty(A[1]) \otimes_{R}
\Gamma(A) \to \XX(A[1])[-1] \]
that we also denote $\iota$.
The latter construction has the following geometric interpretation:
the pull-back bundle $\pi^\ast (A[1]) = A[1] \times_M A[1] \to A[1]$ embeds into
the shifted tangent bundle $T_{A[1]}$ as the bundle of $\pi$-vertical tangent vectors in the usual way.
Applying the shift functor $[-1]$ fiberwise, we get an embedding
\[ \pi^\ast A = A[1] \times_M A \hookrightarrow T_{A[1]}[-1] .\]
Given a section $V \in \Gamma (\pi^\ast A)$, its image $\iota_{V} \in \XX (A[1])[-1]$
is nothing but the composition
\[ A[1] \xto{V} A[1] \times_M A \hookrightarrow T_{A[1]}[-1] .\]
of $V$ followed by the inclusion.

For any given section $(X,\nu)\in\Gamma(\pi^! L)$, the second
component $\nu$ is a smooth map $\nu : A[1] \to L$ covering the identity map $\id_M$.
Composing it with the projection $p : L \to A$, we get a map
\[ p\circ\nu : A[1]\to A \]
covering the identity map $\id_M$ or, equivalently, a section of the pull-back bundle
$\pi^\ast A\to A[1]$. In particular, we can consider the vector field
$\iota_{p\circ\nu} \in \XX (A[1])$.

Thus, on the spaces of sections, $H_0$ induces an operator, denoted by
the same symbol by abuse of notations:
\[ H_0 : \Gamma(\pi^! L) \to \Gamma(\pi^! L)[-1] \]
given by
\begin{equation}\label{eq:H00}
H_0(X,\nu) := \big(\iota_{p\circ\nu},0\big)
,\qquad \forall(X,\nu)\in\Gamma(\pi^! L).
\end{equation}
Since $\iota_{p\circ\nu}$ is a vector field
on $A[1]$ vertical relatively to the projection $\pi : A[1]\to M$,
the pair $\big(\iota_{p\circ\nu},0\big)$ characterizes a (null) section of
$\pi^! L \to A[1]$, which shows that $H_0$
is indeed well-defined.
For a characterizing pair of the special type
$(X,v\circ\pi)$, where $X\in\XX(A[1])$ is
$\pi_*$-projectable onto the vector field
$\rho(v)\in\XX(M)$, image of $v\in\Gamma(L)$
under the anchor map, we have
\begin{equation}\label{eq:H000}
H_0(X,v\circ\pi)=\big(\iota_{p(v)},0\big).
\end{equation}
For a generic characterizing pair
$(X,\nu)\in\Gamma(\pi^! L)$, expanding $\nu$
as a linear combination
$\nu=\sum_k\xi_k\otimes v_k$
with $\xi_k\in\Gamma(\Lambda^\bullet A^\vee)$ and $v_k\in\Gamma(L)$, we obtain
\begin{equation}\label{eq:H0000}
H_0(X,\nu)=
\left(\sum_k(-1)^{\degree{\xi_k}}\xi_k\cdot\iota_{p(v_k)},0 \right).
\end{equation}

\begin{theorem}\label{lem:contraction_0}
The triple of operators $(P_0, I_0, H_0)$, denoted by the
same symbols by abuse of notations, 
on the spaces of sections,
induced by the bundle maps $(P_0, I_0, H_0)$ 
constitutes $\Gamma(\Lambda^\bullet A^\vee)$-linear contraction data:
\begin{equation}\label{eq:contraction_0}
\begin{tikzcd}
\big(\Gamma(\pi^! L),\sQ\big)
\arrow[r, "P_0", shift left] \arrow["H_0", loop left]
& \big(\Gamma(\Lambda^\bullet A^\vee\otimes B),\dA\big)
\arrow[l, "I_0", shift left]
.\end{tikzcd}
\end{equation}
Here $\sQ=[s_{\boldi},\argument]$ is the coboundary operator
induced by the section $s_{\boldi}$ defined by Equation~\eqref{Arago},
and $\dA$ is the Chevalley--Eilenberg differential for the Bott representation
of the Lie algebroid $A$ on $B$.
\end{theorem}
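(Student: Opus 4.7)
The plan is to unpack what it means for $(P_0, I_0, H_0)$ to be contraction data and verify the five defining identities separately:
(i) $P_0 I_0 = \id$;
(ii) the chain map property $\sQ \circ I_0 = I_0 \circ \dA$;
(iii) the chain map property $P_0 \circ \sQ = \dA \circ P_0$;
(iv) the homotopy equation $\sQ \circ H_0 + H_0 \circ \sQ = \id - I_0 \circ P_0$;
(v) the side conditions $H_0^2 = 0$, $H_0 \circ I_0 = 0$, $P_0 \circ H_0 = 0$.
Since all three operators are already seen to be $\Gamma(\Lambda^\bullet A^\vee)$-linear, every identity may be checked on the generating set of characterizing pairs of the form $(X, v\circ\pi)$ with $v \in \Gamma(L)$ and $X \in \XX(A[1])$ projectable onto $\rho_L(v)$. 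On such generators the bracket $\sQ(X,\nu) = [s_{\boldi},(X,\nu)]$ is given directly by formula \eqref{eq:BRU}.

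Identity (i) is already noted, since $q\circ j = \id_B$. For (v), the identity $H_0^2 = 0$ is immediate from the vanishing of the second entry of $H_0(X,\nu)$; $H_0 I_0 = 0$ follows from $p\circ j = 0$ applied to the second component of $I_0(\beta)$; and $P_0 H_0 = 0$ from $q\circ 0 = 0$. For the chain map property (iii), the first component of $\sQ(X,\nu)$ does not contribute to $P_0 \sQ(X,\nu)$, so the equation reduces to checking that $q$ applied to the second component of $[s_{\boldi},(X,v\circ\pi)]$ coincides with $\dA$ applied to $q(v)$. Using \eqref{eq:BRU} and the definition of $\boldi$, this second component, after applying $q$, yields precisely the Bott-connection combination $\sum_i \xi^i \otimes q[e_i,v]$ together with the Chevalley--Eilenberg piece coming from $[\dA,X]$ acting on the tensor factors, which is exactly $\dA(q\circ\nu)$. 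For (ii), we compute $[s_{\boldi}, I_0(\beta)] = [s_{\boldi}, (X_\beta, j\circ\beta)]$ via \eqref{eq:BRU}; the identity $\pi_*\circ X_\beta = \rho_L\circ j\circ\beta$ and the definition of $\Delta$ combine to show that both components match those of $I_0(\dA\beta)$, with the crucial point being that the $p$-component of $[j(b),j(b')]$ never enters the answer because $I_0$ always lands in the $j(B)$-part of $L$.

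Identity (iv) is where the main work lies. Expanding $H_0\sQ(X,v\circ\pi) + \sQ H_0(X,v\circ\pi)$ using \eqref{eq:BRU} and equations \eqref{eq:H000}--\eqref{eq:H0000}, one obtains in the first component the Cartan-type relation $[\dA,\iota_{p(v)}] = \liederivative{p(v)}$ balanced against $\iota_{p([e_i,v])}$ terms, while the second component collects a term of the form $\iota_{p(v)}(\boldi) = i(p(v))\circ\pi$ together with contributions from $[\boldi, v\circ\pi]$. Meanwhile $(\id - I_0 P_0)(X,v\circ\pi) = (X - X_{q(v)},\, v\circ\pi - j(q(v))\circ\pi) = (X - X_{q(v)},\, i(p(v))\circ\pi)$, using $v = j(q(v)) + i(p(v))$. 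Matching the two sides reduces, via $\Gamma(\Lambda^\bullet A^\vee)$-linearity, to the identity $[\dA,\iota_{p(v)}] + \iota_{p([e_i,v])}\xi^i = X - X_{q(v)}$ on suitable test sections, which is a direct consequence of the decomposition $v = j(q(v)) + i(p(v))$ and the defining formulas for $\Delta$ and $X_\beta$.

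The main obstacle is therefore the careful sign and index bookkeeping in identity (iv): the formula \eqref{eq:BRU} produces Koszul signs depending on the degrees of $X$, $X'$ and $\nu$, and the operator $\iota_{p\circ\nu}$ is of internal degree $-1$, so that the sign pattern in $\sQ H_0 + H_0 \sQ$ must be traced through the decomposition of $\nu$ as $\sum_k \xi_k \otimes v_k$ and the ensuing graded Leibniz rules. Once this is done cleanly for generators, $\Gamma(\Lambda^\bullet A^\vee)$-linearity propagates the identities to all of $\Gamma(\pi^! L)$ and the theorem follows.
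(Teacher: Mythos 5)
Your strategy is sound and genuinely different in organization from the paper's. You verify the five contraction identities directly on generators, whereas the paper routes everything through an abstract criterion (Lemma~\ref{lem:contraction}): it first checks only $H_0^2=0$ and $H_0\sQ H_0=H_0$ (Lemma~\ref{Leman}), which already yields a contraction of $\Gamma(\pi^!L)$ onto the subcomplex $K^\bullet=\ker(H_0\sQ)\cap\ker(\sQ H_0)$, and then identifies $K^\bullet$ with $\im(I_0)$ by computing only the \emph{second} component of $(\id-[H_0,\sQ])(X,\nu)$ and invoking a uniqueness statement (Corollary~\ref{cor:NAP}) to pin down the first component for free. The payoff of the paper's route is precisely that it never has to carry out the first-component computations that constitute the ``main work'' in your step (iv), nor the analogous matching in step (ii): the identity $[\dA,\Delta_b]=X_{\dA(b)}$ needed for $\sQ I_0=I_0\dA$ is extracted from $\sQ^2=0$ together with the same uniqueness argument rather than computed. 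Your route buys directness and makes the homotopy identity explicit, at the price of a heavier Cartan-calculus computation on $A[1]$.

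Two spots in your sketch need tightening before it is a proof. First, the reduction to generators $(X,v\circ\pi)$ is not justified by the $\Gamma(\Lambda^\bullet A^\vee)$-linearity of $P_0,I_0,H_0$ alone, since $\sQ$ is only a derivation over $\dA$; what one must observe (as the paper does for $H_0\sQ H_0-H_0$) is that each \emph{defect} operator, e.g.\ $\sQ I_0-I_0\dA$ and $\sQ H_0+H_0\sQ-\id+I_0P_0$, is $\Gamma(\Lambda^\bullet A^\vee)$-linear because the $\dA f$ terms cancel. Second, your explanation of (ii) is off target: no bracket $[j(b),j(b')]$ occurs in $\sQ I_0(b)$; the actual cancellation is between $-\Delta_b(\varepsilon^k)\otimes i(e_k)$ and the $i(A)$-component $-\varepsilon^k\otimes i(\Delta_b e_k)$ of $\varepsilon^k\otimes[i(e_k),j(b)]$, and the remaining nontrivial claim is the first-component identity $[\dA,\Delta_b]=X_{\dA(b)}$, which your proposal asserts but does not prove. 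Likewise in (iv) the first-component identity involves not only $[\dA,\interior{p(v)}]=\liederivative{p(v)}$ and the terms $\varepsilon^k\cdot\interior{p[e_k,v]}$ but also the contribution $\sum_k X(\varepsilon^k)\cdot\interior{e_k}$ coming from $-X(\boldi)$, which is what reproduces the arbitrary projectable field $X$ on the right-hand side $X-X_{q(v)}$. These computations do close up, so your plan succeeds, but they are exactly the content the paper's abstract detour is designed to avoid.
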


\subsection{Proof of Theorem~\ref{lem:contraction_0}}

The present section is devoted to
the proof of Theorem~\ref{lem:contraction_0}.

The following lemma is well known.

\begin{lemma}[\cite{MR0220273}]
\label{lem:contraction}
Let $C^\bullet$ be a cochain complex with coboundary operator
$\delta:C^\bullet\to C[1]^\bullet$.
If there exists a linear operator $h:C^\bullet\to C[-1]^\bullet$
satisfying
\[ h^2=0 \qquad\text{and}\qquad h\delta h=h ,\]
then
\begin{enumerate}
\item the operators $h\delta$, $\delta h$, and $[h,\delta]=h\delta+\delta h$
are projection operators;
\item $K^\bullet:=\im(\id-[h,\delta])=\ker([h,\delta])
=\ker(h\delta)\cap\ker(\delta h)$ is a subcomplex of $(C^\bullet,\delta)$;
\item the graded module $C^\bullet$ decomposes as the direct sum
$C^\bullet=K^\bullet\oplus\im(h\delta)\oplus\im(\delta h)$;
\item and we have a contraction
\[ \begin{tikzcd}
(C^\bullet,\delta) \arrow[r, "p", shift left] \arrow["h", loop left] &
(K^\bullet,\delta) \arrow[l, "i", shift left]
\end{tikzcd} ,\]
where $p$ is the projection of $C^\bullet$ onto its direct summand $K^\bullet$
and $i$ is the inclusion of $K^\bullet$ into $C^\bullet$
so that $pi=\id$ and $ip=\id-[h,\delta]$.
\end{enumerate}
\end{lemma}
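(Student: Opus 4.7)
The proof is a routine identity-chase starting from the three relations $\delta^2=0$, $h^2=0$, and $h\delta h=h$. The strategic observation is that the decomposition
\[ \id_C = (\id_C - [h,\delta]) + h\delta + \delta h \]
already encodes everything: once the three summands on the right are shown to be pairwise orthogonal idempotents, claims (1), (2), and (3) all drop out simultaneously, and claim (4) becomes a matter of naming.

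The first technical step is to verify idempotency and orthogonality. Directly, $(h\delta)^2 = (h\delta h)\delta = h\delta$ and $(\delta h)^2 = \delta(h\delta h) = \delta h$, while the cross-products $(h\delta)(\delta h) = h\delta^2 h$ and $(\delta h)(h\delta) = \delta h^2 \delta$ both vanish; hence $[h,\delta]^2 = [h,\delta]$, which settles (1). The same ingredients yield $[h,\delta](h\delta) = h\delta$ and $[h,\delta](\delta h) = \delta h$, so $\id - [h,\delta]$ is orthogonal to $h\delta$ and to $\delta h$ on either side.

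For (2) and (3): since $[h,\delta]$ is idempotent, $\im(\id-[h,\delta]) = \ker[h,\delta]$ is automatic. The inclusion $\ker(h\delta)\cap\ker(\delta h)\subseteq\ker[h,\delta]$ is trivial, and the reverse inclusion is obtained by applying $h$ to the defining relation $h\delta x+\delta h x=0$: using $h^2=0$ and $h\delta h=h$ gives $hx=0$, whence both $\delta h x=0$ and $h\delta x=0$. The identity displayed above, combined with the pairwise orthogonality, then forces the direct-sum decomposition in (3). That $K^\bullet$ is a subcomplex is a two-line check: $\delta(\id-[h,\delta]) = \delta - \delta h\delta = (\id-[h,\delta])\delta$, so $\delta$ maps $\im(\id-[h,\delta])$ into itself.

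Finally, for (4) I would take $p$ to be the projector $\id-[h,\delta]$ corestricted to its image $K^\bullet$ and $i$ the inclusion. Then $pi=\id_{K^\bullet}$ and $ip=\id-[h,\delta]$ are tautological, giving the homotopy identity $\id-ip=h\delta+\delta h$. The standard contraction side conditions $h^2=0$, $ph = h - (h\delta h+\delta h^2)=0$, and $hi=0$ (the last because every element of $\ker[h,\delta]$ is annihilated by $h$, as shown above) all hold on the nose. The only step that requires any thought is the reverse inclusion $\ker[h,\delta]\subseteq\ker(h\delta)\cap\ker(\delta h)$ in (2); everything else is unambiguous substitution in the associative algebra generated by $h$ and $\delta$ modulo the three given relations, and the argument is sign-free because the Koszul bracket of operators of degrees $-1$ and $+1$ is the plain anticommutator $h\delta+\delta h$.
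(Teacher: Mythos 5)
Your proof is correct and complete. The paper itself offers no proof of this lemma --- it is stated as well known with only a citation --- so there is nothing to compare against; your argument (three pairwise orthogonal idempotents summing to the identity, plus the observation that applying $h$ to $[h,\delta]x=0$ forces $hx=0$ and hence the equality $\ker[h,\delta]=\ker(h\delta)\cap\ker(\delta h)$) is exactly the standard verification the authors are taking for granted.
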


\begin{lemma}\label{Leman}
Consider the cochain complex $\big(\Gamma(\pi^! L),\sQ\Big)$
with the coboundary operator $\sQ=[s_{\boldi},\argument]$ introduced earlier
--- see~Equation~\eqref{Arago}.
The operator $H_0$ defined by Equation~\eqref{eq:H00} satisfies
\[ H_0^2=0 \qquad\text{and}\qquad H_0\sQ H_0=H_0 .\]
\end{lemma}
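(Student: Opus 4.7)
The first identity $H_0^2=0$ is immediate from the definition \eqref{eq:H00}: since the second component of $H_0(X,\nu)=(\iota_{p\circ\nu},0)$ vanishes, applying $H_0$ a second time yields $(\iota_{p\circ 0},0)=0$.

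For the more substantive identity $H_0\sQ H_0=H_0$, the plan is to fix a homogeneous characterizing pair $(X,\nu)$ with $\nu=\sum_k\xi_k\otimes v_k$, where $\xi_k\in\Gamma(\Lambda^\bullet A^\vee)$ and $v_k\in\Gamma(L)$, and to unfold $\sQ H_0(X,\nu)=\lie{s_{\boldi}}{(\iota_{p\circ\nu},0)}$ by means of the general bracket formula \eqref{eq:BRU}. Since $H_0$ sees only the $L$-valued component of its argument, it is enough to track the second entry of $\sQ H_0(X,\nu)$. Both $\dA(0)=0$ and $[\boldi,0]=0$, so the only surviving term is the middle one, $-(-1)^{|X_2||\nu_1|}X_2(\nu_1)$, which up to a sign equals $\iota_{p\circ\nu}(\boldi)$.

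The key computation is then to evaluate $\iota_{p\circ\nu}(\boldi)$ and to post-compose the result with $p:L\to A$. The crucial observation is that $\boldi$ is the tautological $L$-valued section of $A[1]\to M$ induced by the inclusion $A\hookrightarrow L$: in a local frame $(e_j)$ of $A$ with induced fibre coordinates $\xi^j$ on $A[1]$, one has $\boldi=\sum_j\xi^j\otimes i(e_j)$. Unpacking the contraction via \eqref{eq:H0000} and the pairing $\duality{\xi^j}{p(v_k)}$ yields, up to a global sign, $\iota_{p\circ\nu}(\boldi)=\pm\sum_k\xi_k\otimes i(p(v_k))$. Post-composition with $p$, combined with the splitting identity $p\circ i=\id_A$, collapses this to $\sum_k\xi_k\otimes p(v_k)=p\circ\nu$. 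Thus $H_0\sQ H_0(X,\nu)=(\iota_{p\circ\nu},0)=H_0(X,\nu)$, as desired.

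The main obstacle is sign bookkeeping: the factor $-(-1)^{|X_2||\nu_1|}$ from \eqref{eq:BRU} and the sign $(-1)^{|\xi_k|}$ appearing in \eqref{eq:H0000} each contribute, and the content of the argument is that they cancel. Conceptually, however, the lemma reduces to the elementary fact that contracting the tautological section $\boldi$ with a section of $\pi^{\ast}A$ and then projecting $L\to A$ recovers the original section --- a direct reformulation of the splitting identity $p\circ i=\id_A$.
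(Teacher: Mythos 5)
Your proof is correct and follows essentially the same route as the paper: in both cases the heart of the matter is the computation $\iota_{p\circ\nu}(\boldi)=\sum_k(\pm 1)\,\xi_k\otimes i(p(v_k))$ combined with the splitting identity $p\circ i=\id_A$, which shows that $H_0\sQ H_0$ reproduces $(\iota_{p\circ\nu},0)$. The only (cosmetic) difference is that the paper first notes that $H_0\sQ H_0-H_0$ is $\Gamma(\Lambda^\bullet A^\vee)$-linear and therefore need only be checked on the generating sections $(X,v\circ\pi)$, where all signs disappear, whereas you compute directly on a general homogeneous characterizing pair and track the signs by hand --- both work.
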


\begin{proof}
It is obvious that $H^2_0=0$.
Unlike $I_0$, $P_0$, and $H_0$,
the operator $\sQ$ is \emph{not}
$C^\infty(A[1])$-linear: we have,
for all $f\in C^\infty(A[1])$ and $s\in\Gamma(\pi^!L)$,
\begin{equation}
\label{eq:H0}
\sQ(f\cdot s)=(\dA f)\cdot s + (-1)^{\degree{f}} f\cdot\sQ(s). 
\end{equation}

Hence
\begin{align*}
(H_0\sQ H_0) (f\cdot s)
&= H_0 \big( (\dA f)\cdot H_0(s) + (-1)^{\degree{f}} f\cdot\sQ(H_0(s))\big) \\
&= (\dA f)\cdot H_0^2 (s)+H_0\big((-1)^{\degree{f}} f\cdot\sQ(H_0(s)))\big) \\
&= f\cdot (H_0\sQ H_0) (s)
.\end{align*}
Therefore it follows
that the operator $H_0\sQ H_0-H_0$ is $\Gamma(\Lambda^\bullet A^\vee)$-linear.
Thus, to prove the identity $H_0\sQ H_0=H_0$,
it suffices to show that the operator $H_0\sQ H_0-H_0$
annihilates every characteristic pair of the special type $(X,v\circ\pi)$,
where the vector field $X\in\XX(A[1])$ is $\pi_*$-projectable
onto the vector field $\rho(v)\in\XX(M)$.
Now we have
\begin{align*}
(H_0\sQ H_0) (X,v\circ\pi)
& = H_0 \lie{(\dA,\boldi)}{(\iota_{p(v)},0)}\\
& = H_0 \big(\lie{\dA}{\iota_{p(v)}}, \ \iota_{p(v)}(\boldi)\big)\\
& = H_0 \big(\lie{\dA}{\iota_{p(v)}}, \ i p(v) \circ\pi\big)\\
& = \left( \iota_{p (v)}, 0 \right)\\
& = H_0(X,v\circ\pi).
\end{align*}
This concludes the proof.
\end{proof}

It follows from Lemmas~\ref{lem:contraction} and~\ref{Leman}
that \[ K^\bullet:=\im(\id-[H_0,\sQ])
=\ker(H_0\sQ)\cap\ker(\sQ H_0) \]
is a subcomplex of $\big(\Gamma(\pi^! L),\sQ\Big)$.

Next, we will identify this subcomplex $K^\bullet$ with
the cochain complex
$\big(\Gamma(\Lambda^\bullet A^\vee\otimes B),d_A\big)$.

\begin{lemma}
\label{lem:ROM}
Given any $b\in\Gamma(B)$, there exists a unique vector field
$X\in\XX(A[1])$, which is $\pi_*$-projectable onto the vector field
$\rho_{j(b)}\in\XX(M)$ and satisfies the condition:
\[ H_0\sQ\big(X,j(b)\circ\pi\big)=0 .\]
That is, $X=\Delta_b$.
\end{lemma}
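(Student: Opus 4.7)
The plan is to compute $H_0\sQ(X,j(b)\circ\pi)$ explicitly by means of the bracket formula~\eqref{eq:BRU} and to extract the linear conditions it imposes on $X$. Working in local coordinates $(x^1,\dots,x^n,\xi^1,\dots,\xi^m)$ on $A[1]$ associated with a local frame $(e_k)$ of $A$, I expand $\boldi=\sum_k\xi^k\otimes e_k$. Applying~\eqref{eq:BRU} to the homogeneous pair $s_{\boldi}=(\dA,\boldi)$ of internal degree $1$ and the degree-$0$ pair $(X,j(b)\circ\pi)$, and using that $\dA$ kills the constant section $1$, I expect the second component of $\sQ(X,j(b)\circ\pi)$ to reduce to
\[ \mu \;=\; -\sum_k X(\xi^k)\otimes e_k \,+\, \sum_k \xi^k\otimes[e_k,j(b)] .\]

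Next, I apply $H_0$. By definition~\eqref{eq:H00}, $H_0$ forgets the first component and records $\iota_{p\circ\mu}$, where $p:L\to A$ is the projection determined by the splitting~\eqref{eq:splitting2}. Using $p(e_k)=e_k$ together with the defining identity $\Delta_b a=p[j(b),i(a)]$, I find
\[ p\circ\mu \;=\; -\sum_k X(\xi^k)\otimes e_k \,-\, \sum_k \xi^k\otimes\Delta_b e_k .\]
Since $\iota:\Gamma(\pi^\ast A)\to\XX(A[1])[-1]$ is injective (a local section $\sum_k\alpha^k\otimes e_k$ of $\pi^\ast A$ is recovered from the derivation $\sum_k\alpha^k\iota_{e_k}$ by evaluating on the fiber coordinates $\xi^l$), the equation $H_0\sQ(X,j(b)\circ\pi)=0$ is equivalent to $p\circ\mu=0$, i.e.\ to the system $X(\xi^l)=-\sum_k(\Delta_b)^l_k\xi^k$ for every $l$, where $(\Delta_b)^l_k$ denotes the matrix of $\Delta_b$ in the frame $(e_k)$.

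For existence I verify that $X=\Delta_b$ solves this system: by construction $\Delta_b$ is $\pi_\ast$-projectable onto $\rho_{j(b)}$, and the Leibniz rule for the natural pairing gives $\langle\Delta_b\xi^l,e_k\rangle=\rho_{j(b)}(\delta^l_k)-(\Delta_b)^l_k=-(\Delta_b)^l_k$, hence $\Delta_b\xi^l=-\sum_k(\Delta_b)^l_k\xi^k$, exactly as required. For uniqueness, a degree-$0$ vector field on $A[1]$ is determined locally by its values on the coordinate functions $x^j$ and $\xi^l$; the projectability condition $\pi_\ast X=\rho_{j(b)}$ fixes $X(x^j)=\rho_{j(b)}(x^j)$, and the system above fixes $X(\xi^l)$. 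Hence $X=\Delta_b$ is the unique solution.

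The main obstacle is the sign bookkeeping inside~\eqref{eq:BRU}--\eqref{eq:PEK12}: the internal degree of $\boldi$ is $1$ while that of the pair $(X,j(b)\circ\pi)$ is $0$, so the factor $(-1)^{|X'||\nu|}$ in~\eqref{eq:BRU} must be tracked carefully, as must the pairing convention used when dualizing $\Delta_b$ from $\Gamma(A)$ to $\Gamma(A^\vee)$. Once $\mu$ is computed correctly, the remainder is a direct linear-algebraic comparison with the definition of $\Delta_b$ from Section~\ref{defIo}.
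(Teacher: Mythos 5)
Your proposal is correct and follows essentially the same route as the paper: expand the bracket via Equation~\eqref{eq:BRU} and~\eqref{eq:PEK12}, observe that $H_0\sQ(X,j(b)\circ\pi)=0$ amounts to the vanishing of $p$ applied to the second component, and solve the resulting linear system $X(\xi^l)=\Delta_b\xi^l$ for the values of $X$ on fiber coordinates, with projectability fixing the values on base coordinates. The only cosmetic difference is that the paper deduces $\degree{X}=0$ from the degree of $\Delta_b\varepsilon^l$ rather than assuming it, but your sign bookkeeping and the dualization of $\Delta_b$ agree with the paper's.
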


\begin{proof}
According to Equation~\eqref{eq:BRU}, we have
\[ \sQ\big(X,\jb\circ\pi\big)
=\lie{(d_A,s_{\boldi})}{(X,\jb\circ\pi)}
=\big(\lie{\dA}{X}, \ \lie{\boldi}{\jb\circ\pi}-(-1)^{\degree{X}}X(\boldi)\big) .\]
Thus $H_0\sQ(X,j(b)\circ\pi)=0$ if and only if
\begin{equation}\label{eq:ORD}
p\big(\lie{\boldi}{j(b)\circ\pi}-(-1)^{\degree{X}}X(\boldi)\big)=0.
\end{equation}
Now, according to~\eqref{eq:PEK12}, we have
\[ \lie{\boldi}{j(b)\circ\pi} - (-1)^{\degree{X}} X(\boldi)
=\sum_k \varepsilon^k \otimes \lie{i(e_k)}{j(b)}
-\sum_k (-1)^{\degree{X}} X(\varepsilon^k) \otimes i(e_k) ,\]
where $e_1,e_2,\dots$ and $\varepsilon^1,\varepsilon^2,\dots$ are any pair
of dual local frames for $A$ and $A^\vee$ respectively.

Therefore, Equation~\eqref{eq:ORD} is equivalent to
\[ -\sum_k \varepsilon^k \otimes \Delta_b e_k
-(-1)^{\degree{X}} \sum_k X(\varepsilon^k)\otimes e_k=0 .\]
Pairing with $\varepsilon^l$, we deduce that
\[ (-1)^{\degree{X}}X(\varepsilon^l)
=-\sum_k \varepsilon^k \langle \Delta_b e_k \mathbin{|} \varepsilon^l \rangle
=\sum_k \langle \Delta_b \varepsilon^l \mathbin{|} e_k \rangle \varepsilon^k
= \Delta_b \varepsilon^l .\]
Hence it follows that $\degree{X}=0$ and $X(\varepsilon^l)=\Delta_b\varepsilon^l$,
for every $l$. Since $\pi_*(X)=\rho_{j(b)}$,
we can conclude that $X=\Delta_{b}$.
\end{proof}

\begin{corollary}
\label{cor:NAP}
Given any section $\beta\in\Gamma(\Lambda^\bullet A^\vee\otimes B)$,
consider the induced map \[ \nu_\beta:A[1]\xto{\beta}B\xto{j}L .\]
There exists a unique vector field $X\in\XX(A[1])$ satisfying
\[ \pi_*\circ X=\rho\circ\nu_\beta \qquad\text{and}\qquad
H_0\sQ(X,\nu_\beta)=0 ,\]
namely the vector field $X_\beta$ defined by Equation~\eqref{eq:SFO}.
\end{corollary}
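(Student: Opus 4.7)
The plan is to separate existence from uniqueness and to reduce both to the degree-zero case already handled in Lemma~\ref{lem:ROM}, exploiting the $\Gamma(\Lambda^\bullet A^\vee)$-linearity built into $X_\beta$, $H_0$, and the Leibniz rule for $\sQ$.

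For uniqueness, suppose $X_1, X_2 \in \XX(A[1])$ both satisfy the two conditions, and set $Y:=X_1-X_2$. Then $\pi_* Y=0$, so $(Y,0)\in\Gamma(\pi^!L)$ is a null section, and by $\KK$-linearity $H_0\sQ(Y,0)=0$. Using Equations~\eqref{eq:BRU} and~\eqref{eq:PEK12},
\[ \sQ(Y,0)=\lie{(\dA,\boldi)}{(Y,0)}=\big(\lie{\dA}{Y},\,-(-1)^{\degree{Y}}Y(\boldi)\big), \]
and by~\eqref{eq:H00},
\[ H_0\sQ(Y,0)=(-1)^{\degree{Y}+1}\big(\iota_{p\circ Y(\boldi)},0\big). \]
Expanding $\boldi=\sum_k\varepsilon^k\otimes i(e_k)$ in dual local frames for $A^\vee$ and $A$, one has $p\circ Y(\boldi)=\sum_k Y(\varepsilon^k)\otimes e_k$, whence $\iota_{p\circ Y(\boldi)}=\sum_k Y(\varepsilon^k)\cdot\iota_{e_k}$. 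Evaluating this derivation on each $\varepsilon^l$ forces $Y(\varepsilon^l)=0$ for every $l$. Together with $\pi_* Y=0$, which says $Y$ vanishes on $\pi^*\cirm$, this shows $Y$ annihilates two families of algebra generators of $C^\infty(A[1])$, hence $Y=0$, i.e.\ $X_1=X_2$.

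For existence, it remains to check that $(X_\beta,\nu_\beta)$ meets both conditions. The projection condition $\pi_*\circ X_\beta=\rho\circ\nu_\beta$ was already observed in Section~\ref{defIo}. For the second, since $s_{\boldi}$ has degree $+1$ and anchor $\dA$, the Lie algebroid Leibniz rule on $\Gamma(\pi^!L)$ yields
\[ \sQ(\xi\cdot\sigma)=\dA(\xi)\cdot\sigma+(-1)^{\degree{\xi}}\xi\cdot\sQ(\sigma) \]
for any homogeneous $\xi\in\Gamma(\Lambda^\bullet A^\vee)$ and $\sigma\in\Gamma(\pi^!L)$. Writing $\beta=\sum_k\xi_k\otimes b_k$ with $\xi_k$ homogeneous, so that $(X_\beta,\nu_\beta)=\sum_k\xi_k\cdot(\Delta_{b_k},j(b_k)\circ\pi)$, and invoking the $\Gamma(\Lambda^\bullet A^\vee)$-linearity of $H_0$,
\[ H_0\sQ(X_\beta,\nu_\beta)=\sum_k\Big(\dA(\xi_k)\cdot H_0(\Delta_{b_k},j(b_k)\circ\pi)+(-1)^{\degree{\xi_k}}\xi_k\cdot H_0\sQ(\Delta_{b_k},j(b_k)\circ\pi)\Big). \]
The first summand vanishes because $p\circ j=0$ together with Equation~\eqref{eq:H000} gives $H_0(\Delta_{b_k},j(b_k)\circ\pi)=(\iota_{p(j(b_k))},0)=0$, while the second vanishes by Lemma~\ref{lem:ROM}.

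The main obstacle is the careful graded-sign bookkeeping arising from Equations~\eqref{eq:BRU} and~\eqref{eq:PEK12}, and verifying that the Leibniz rule for $\sQ$ with respect to the $C^\infty(A[1])$-module structure on $\Gamma(\pi^!L)$ interacts correctly with the $\Gamma(\Lambda^\bullet A^\vee)$-linearity of $H_0$ and with the identification of $\iota_V$ as $\sum_k Y(\varepsilon^k)\cdot\iota_{e_k}$. Once these signs are correctly marshalled, both parts reduce cleanly to the degree-zero case of Lemma~\ref{lem:ROM} by means of the decomposition $\beta=\sum_k\xi_k\otimes b_k$.
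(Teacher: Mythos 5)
Your proof is correct and follows essentially the same route as the paper: existence via the Leibniz rule for $\sQ$ and the $\Gamma(\Lambda^\bullet A^\vee)$-linearity of $H_0$, reducing termwise to Lemma~\ref{lem:ROM}, and uniqueness by subtracting the two candidates and analyzing the resulting null section. The only cosmetic differences are that the paper handles uniqueness by invoking Lemma~\ref{lem:ROM} directly with $b=0$ (giving $X_1-X_2=\Delta_0=0$) rather than re-deriving that special case, and that a Koszul sign $(-1)^{\degree{\xi_k}+1}$ is dropped when you pull $\dA(\xi_k)$ through the degree $-1$ operator $H_0$ --- harmless, since that term vanishes because $p\circ j=0$.
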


\begin{proof}
Without loss of generality, we may assume that $\beta =\xi\otimes b$
with $\xi\in\Gamma(\Lambda^\bullet A^\vee)$ and $b\in\Gamma(B)$
--- note that $\Gamma(\Lambda^\bullet A^\vee\otimes B)
\cong\Gamma(\Lambda^\bullet A^\vee)\otimes_R\Gamma(B)$.
Then, according to Equation~\eqref{eq:SFO}, we have
$X_{\beta}=\xi\cdot\Delta_{b}$ and thence
$(X_\beta,\nu_\beta)=\xi\cdot(\Delta_{b},j(b)\circ\pi)$.
It follows from Equation~\eqref{eq:H000} and Lemma~\ref{lem:ROM} that
\begin{multline*} H_0\sQ(X_\beta,\nu_\beta)
=H_0\sQ\big(\xi\cdot(\Delta_{b},\jb\circ\pi)\big) \\
=H_0\big((\dA\xi)\cdot(\Delta_{b},\jb\circ\pi)
+(-1)^{\degree{\xi}}\xi\cdot\sQ(\Delta_{b},\jb\circ\pi)\big) \\
=(-1)^{\degree{\xi}+1}(\dA\xi)\cdot H_0(\Delta_{b},\jb\circ\pi)
+\xi\cdot H_0\sQ(\Delta_{b},\jb\circ\pi)=0
.\end{multline*}

Conversely, assume that $H_0\sQ(X,\nu_\beta)=0$.
It follows that
\[ H_0\sQ(X-X_\beta,0)=H_0\sQ(X,\nu_\beta)
-H_0\sQ(X_\beta,\nu_\beta)=0 .\]
According to Lemma~\ref{lem:ROM},
we have $X-X_\beta=\Delta_{0}=0$.
Thus $X=X_\beta$.
\end{proof}

Note that $H_0(X_\beta,\nu_\beta)=0$ for all $\beta\in\Gamma(\Lambda^\bullet
A^\vee\otimes B)$ since $p\circ j=0$. It follows that
\[ (X_\beta,\nu_\beta)\in\ker(\sQ H_0)\cap\ker(H_0\sQ)=K^\bullet .\]

\begin{proposition}
\label{cor:NAPbis}
The map
\[ \begin{tikzcd}
\big(\Gamma(\Lambda^\bullet A^\vee\otimes B),\dA\big) \arrow[r, "I_0"]
& \big(\Gamma(\pi^! L),\sQ\big)
\end{tikzcd} \]
is a cochain map.
\end{proposition}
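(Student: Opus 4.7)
The plan is to exploit the uniqueness statement in Corollary~\ref{cor:NAP}: any pair $(Y,\nu_\gamma)\in\Gamma(\pi^!L)$ whose second component has the form $\nu_\gamma=j\circ\gamma$ and which satisfies $H_0\sQ(Y,\nu_\gamma)=0$ must coincide with $I_0(\gamma)=(X_\gamma,\nu_\gamma)$. Applied to $\sQ I_0(\beta)$ with the candidate $\gamma=\dA\beta$, this will yield the desired equality $\sQ I_0(\beta)=I_0(\dA\beta)$. Two things thus need to be checked: (i) the second component of $\sQ I_0(\beta)$ is $\nu_{\dA\beta}=j\circ\dA\beta$; and (ii) $H_0\sQ\big(\sQ I_0(\beta)\big)=0$. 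Condition~(ii) is automatic from $\sQ^2=0$, so the entire content lies in~(i).

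To verify~(i), I would first use $\Gamma(\Lambda^\bullet A^\vee)$-linearity to reduce to decomposable elements $\beta=\xi\otimes b$ with $\xi\in\Gamma(\Lambda^\bullet A^\vee)$ and $b\in\Gamma(B)$, then expand $\sQ I_0(\beta)=[(d_A,\boldi),(X_\beta,\nu_\beta)]$ via equations~\eqref{eq:BRU} and~\eqref{eq:PEK12}. The second component comes out to
\[
 d_A(\nu_\beta)-(-1)^{|\xi|}X_\beta(\boldi)+[\boldi,\nu_\beta].
\]
I would then use the splitting $L=i(A)\oplus j(B)$ (with projections $p$ and $q$) to separate this expression into its $i(A)$- and $j(B)$-parts. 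The $j(B)$-part reassembles, via $q[i(e_k),j(b)]=\nabla^{\mathsf{Bott}}_{e_k}b$ and the sign swap $\varepsilon^k\wedge\xi=(-1)^{|\xi|}\xi\wedge\varepsilon^k$, into exactly $j\circ\dA(\xi\otimes b)=\nu_{\dA\beta}$, matching the Leibniz formula for the Chevalley--Eilenberg differential attached to the Bott connection. The $i(A)$-part, involving $p[i(e_k),j(b)]=-\Delta_b(e_k)$ on one side and $X_\beta(\boldi)=\sum_k(\xi\wedge\Delta_b\varepsilon^k)\otimes i(e_k)$ on the other, cancels upon applying the duality identity $\langle\Delta_b\varepsilon^l\mathbin{|}e_k\rangle=-\langle\varepsilon^l\mathbin{|}\Delta_b e_k\rangle$, itself immediate from the derivation property applied to the constant $\langle\varepsilon^l\mathbin{|}e_k\rangle=\delta^l_k$.

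The main obstacle is keeping the signs and the $i(A)$--$j(B)$ decomposition straight throughout the bracket expansion. Notably, the vanishing of the $i(A)$-contribution, though it appears as a nontrivial coincidence between two terms, is forced a priori by Corollary~\ref{cor:NAP}, since $\im(I_0)$ consists precisely of pairs whose $\nu$-component is valued in $j(B)\subset L$; the duality identity above is the concrete algebraic mechanism that delivers this cancellation. Once~(i) is established, combining it with~(ii) and Corollary~\ref{cor:NAP} immediately yields $\sQ I_0(\beta)=I_0(\dA\beta)$, completing the proof.
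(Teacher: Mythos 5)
Your proposal is correct and follows essentially the same route as the paper's own proof: compute the second component of $\sQ I_0(\beta)$ explicitly via Equation~\eqref{eq:BRU}, show via the splitting $L=i(A)\oplus j(B)$ and the duality identity for $\Delta_b$ that it equals $\nu_{\dA\beta}$, and then invoke $\sQ^2=0$ together with the uniqueness in Corollary~\ref{cor:NAP} to identify the first component with $X_{\dA\beta}$. The only cosmetic difference is that the paper first treats $\beta=1\otimes b$ and extends by $\Gamma(\Lambda^\bullet A^\vee)$-linearity at the end, whereas you carry the factor $\xi$ through the bracket computation.
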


\begin{proof}
We need to show that $\sQ\circ I_0=I_0\circ\dA$.

Given any $b\in\Gamma(B)$,
Equation~\eqref{eq:BRU} yields
\begin{multline*}
\sQ I_0(b)=\sQ\big(\Delta_b,j(b)\circ\pi\big)
=\lie{(\dA,\boldi)}{(\Delta_b,j(b)\circ\pi)} \\
=\big(\lie{\dA}{\Delta_b},\dA(1)\otimes j(b)
-\sum_k\Delta_b(\varepsilon^k)\otimes i(e_k)
+\sum_k\varepsilon^k\otimes\lie{i(e_k)}{j(b)}\big)
\end{multline*}
since $\boldi=(A[1]\to A\xto{i} L)=\sum_k\varepsilon^k\otimes i(e_k)$,
where $e_1,e_2,\dots$ and $\varepsilon^1,\varepsilon^2,\dots$ are any pair of
dual local frames for $A$ and $A^\vee$ respectively.
A straightforward computation gives
\begin{multline*}
\dA(1)\otimes j(b)-\sum_k\Delta_b(\varepsilon^k)\otimes i(e_k)
+\sum_k\varepsilon^k\otimes\lie{i(e_k)}{j(b)} \\
=-\sum_k\Delta_b(\varepsilon^k)\otimes i(e_k)
+\sum_k\varepsilon^k\otimes
\big(-i(\Delta_b e_k)+j(\nabla^{\Bott}_{e_k} b)\big)
=\sum_k\varepsilon^k\otimes j(\nabla^{\Bott}_{e_k} b)
=\nu_{\dA(b)}
.\end{multline*}
Hence, we obtain
\[ \sQ I_0(b)=\big(\lie{\dA}{\Delta_b},\nu_{\dA(b)}\big) .\]
Since $\sQ^2=0$, it follows that
\[ H_0\sQ\big(\lie{\dA}{\Delta_b},\nu_{\dA(b)}\big)
=H_0\sQ\sQ I_0(b)=0 .\]
According to Corollary~\ref{cor:NAP}, we can therefore conclude that
$\lie{\dA}{\Delta_b}=X_{\dA(b)}$ and thence that
\[ \sQ I_0(b)=\big(X_{\dA(b)},\nu_{\dA(b)}\big)=I_0\dA(b) .\]

Consequently, for any $\alpha\otimes b\in\Gamma(\Lambda^\bullet A^\vee\otimes B)$
with $\alpha\in\Gamma(\Lambda^\bullet A^\vee)$ and $b\in\Gamma(B)$, we obtain
\begin{multline*}
\sQ I_0(\alpha\otimes b)=\sQ\big(\alpha\cdot I_0(b)\big)
=(\dA\alpha)\cdot I_0(b)+(-1)^{\degree{\alpha}}\alpha\cdot\sQ I_0(b) \\
=(\dA\alpha)\cdot I_0(b)+(-1)^{\degree{\alpha}}\alpha\cdot I_0\big(\dA(b)\big)
=I_0\big( \dA(\alpha)\cdot b + (-1)^{\degree{\alpha}} \alpha\cdot\dA(b) \big)
=I_0\dA(\alpha\otimes b)
\end{multline*}
since the map $I_0$ is $\Gamma(\Lambda^\bullet A^\vee)$-linear.

This concludes the proof.
\end{proof}

\begin{proof}[Proof of Theorem~\ref{lem:contraction_0}]
First, we will show that the cochain map $I_0$ identifies
$\big(\Gamma(\Lambda^\bullet A^\vee\otimes B),\dA\big)$
to the subcomplex \[ K^\bullet=\ker(H_0\sQ)\cap\ker(\sQ H_0)
=\im\big(\id-\lie{H_0}{\sQ}\big) \]
of the cochain complex $\big(\Gamma(\pi^! L),\sQ\big)$.
Since $I_0$ is injective, it suffices to show that
$\im(I_0)=K^\bullet$.

Since $p\circ j=0$, we have $H_0 I_0=0$ and thence
$\im I_0\subset\ker(\sQ H_0)$.
Furthermore, since $I_0$ is a cochain map, we have
$H_0\sQ I_0=H_0 I_0\dA=0$ and thence
$\im I_0\subset\ker(H_0\sQ)$.
Thus, we obtain $\im(I_0)\subset K^\bullet$.

Let $\breve{\pi}$ denote the map
\[ \pi^! L\cong T_{A[1]}\times_{T_M}L\ni(X,\nu)\longmapsto\nu\in L .\]
Obviously, we have $\breve{\pi}\circ H_0=0$.
Consequently, for all $(X,\nu)\in\Gamma(\pi^! L)$, we have
\[ \begin{split}
\breve{\pi}\circ\big(\id-\lie{H_0}{\sQ}\big)(X,\nu)
&= \breve{\pi}\big((X,\nu)-H_0\sQ(X,\nu)-\sQ H_0(X,\nu)\big) \\
&= \nu-\breve{\pi}\big(\lie{(\dA,\boldi)}{(\iota_{p\circ\nu},0)}\big) \\
&= \nu-\breve{\pi}\big(\lie{\dA}{\iota_{p\circ\nu}},\iota_{p\circ\nu}(\boldi)\big) \\
&= \nu-\iota_{p\circ\nu}(\boldi) \\
&= \nu-i\circ p\circ\nu \\
&= j\circ q\circ\nu
.\end{split} \]
Therefore, we have
\[ \big(\id-\lie{H_0}{\sQ}\big)(X,\nu)=(Y,j\circ q\circ\nu) \]
for some vector field $Y\in\XX(A[1])$
such that $\pi_*\circ Y=\rho\circ j\circ q_b\circ\nu$.
Since \[ \im(\id-\lie{H_0}{\sQ})
=\ker(H_0\sQ)\cap\ker(\sQ H_0)
\subset\ker(H_0\sQ) ,\]
it follows from Corollary~\ref{cor:NAP} that $Y=X_{q\circ\nu}$.
Thus, we obtain
\begin{equation}\label{eq:entretien}
\big(\id-\lie{H_0}{\sQ}\big)(X,\nu)
=(X_{q\circ\nu},j\circ q\circ\nu)=I_0(q\circ\nu)
,\end{equation}
which shows that
\[ K^\bullet=\im(\id-\lie{H_0}{\sQ})\subset\im(I_0) .\]

From Proposition~\ref{cor:NAPbis}, we can now conclude
that $I_0$ does indeed identify the cochain complex
$\big(\Gamma(\Lambda^\bullet A^\vee \otimes B),\dA\big)$
to the subcomplex $K^\bullet$ of
$\big(\Gamma(\pi^! L),\sQ\big)$ isomorphically.

It is immediate that $P_0 I_0=\id$ and it follows from Equation~\eqref{eq:entretien}
that, for all $(X,\nu)\in\Gamma(\pi^! L)$,
\[ \big(\id-\lie{H_0}{\sQ}\big)(X,\nu)
=I_0(q\circ\nu)=I_0 P_0(X,\nu) .\]
Hence $\id-\lie{H_0}{\sQ}=I_0 P_0$.
The conclusion thus follows from Lemma~\ref{lem:contraction}.
\end{proof}

\begin{remark}\label{flageolet}
Every section $a\in\Gamma(A)$ determines
a pair of vector fields on $A[1]$: namely the
Lie derivative $\liederivative{a}$
and the interior product $\interior{a}$.
It is easily seen that the pairs
$(\liederivative{a},i\circ a\circ\pi)$
and $(\interior{a},0)$
characterize a pair of sections of
$\pi^!L\to A[1]$.

On the other hand, every section
$b\in\Gamma(B)$ determines a vector field
on $A[1]$: namely the derivation
$\Delta_b=i^\top\circ\liederivative{j(b)}
\circ p^\top$ of the algebra
$C^\infty(A[1])=\Gamma(\Lambda^\bullet A^\vee)$
--- see Section~\ref{defIo}.
It is easily seen that the pair
$(\Delta_b,j\circ b\circ\pi)$
characterizes a section of $\pi^!L\to A[1]$.

It turns out that every section of $\pi^!L\to A[1]$
can be written (locally) as a linear combination with 
coefficients in $C^\infty(A[1])$ of sections
of the three aforementioned types:
\[ (\liederivative{a},i\circ a\circ\pi),
\qquad (\interior{a},0), \qquad\text{and}\qquad
(\Delta_b,j\circ b\circ\pi) .\]

For instance, the canonical section $s_{\boldi}=(\dA,\boldi)$
of $\pi^!L\to A[1]$ determined by the morphism of Lie
algebroids $i:A\to L$ arises locally as the linear
combination with coefficients in $C^\infty(A[1])$
\begin{equation}\label{eq:Darwin}
s_{\boldi} = (\dA,\boldi) = \sum_k \left\{
\xi^k\cdot(\liederivative{e_k},i\circ e_k\circ\pi)
-(\dA\xi^k)\cdot(\interior{e_k},0) \right\}
\end{equation}
for any pair $\{e_k\}$ and $\{\xi_k\}$
of dual local frames for $A$ and $A^\vee$
respectively --- see Equation~\eqref{eq:italo}
in Appendix~\ref{trenitalia}.

Furthermore, we note that, for all $f\in\cirm$,
$a\in\Gamma(A)$, and $b\in\Gamma(B)$, we have
\[ (\liederivative{f\cdot a},i\circ f\cdot a\circ\pi)=
\pi^*(f)\cdot(\liederivative{a},i\circ a\circ\pi)
+\dA f\cdot(\interior{a},0) \]
and
\[ (\Delta_{f\cdot b},j\circ f\cdot b\circ\pi)
=\pi^*(f)\cdot(\Delta_b,j\circ b\circ\pi) .\]

The following nine relations are immediate consequences
of the definitions of the operators $P_0$, $I_0$, and $H_0$:
\[ \begin{gathered}
P_0(\Delta_b,j\circ b\circ\pi)=1\otimes b \\
P_0(\liederivative{a},i\circ a\circ\pi)=0 \\
P_0(\interior{a},0)=0 \\
P_0(\dA,\boldi)=0
\end{gathered}
\qquad\quad
I_0(1\otimes b)=(\Delta_b,j\circ b\circ\pi)
\qquad\quad
\begin{gathered}
H_0(\Delta_b,j\circ b\circ\pi)=(0,0) \\
H_0(\liederivative{a},i\circ a\circ\pi)=(\interior{a},0) \\
H_0(\interior{a},0)=(0,0) \\
H_0(\dA,\boldi)=\sum_k\xi^k
\cdot(\interior{e_k},0)
\end{gathered} \]

The operator $\sQ$ satisfies the following four relations:
\begin{gather*}
\sQ(\Delta_b,j\circ b\circ\pi)=
\sum_k\xi^k\cdot
\big(\Delta_{\nabla^{\Bott}_{e_k}b},j\circ
(\nabla^{\Bott}_{e_k}b)\circ\pi\big) \\
\sQ(\liederivative{a},i\circ a\circ\pi)=(0,0) \\
\sQ(\interior{a},0)=
(\liederivative{a},i\circ a\circ\pi) \\
\sQ(\dA,\boldi)=(0,0)
\end{gather*}

It follows immediately from the relations above
that, in the direct sum decomposition
\[ C^\bullet=K^\bullet\oplus
\im(H_0\sQ)\oplus
\im(\sQ H_0) ,\]
\begin{enumerate}
\item the direct summand
$K^\bullet=\ker(H_0\sQ)\cap\ker(\sQ H_0)$
is the $C^\infty(A[1])$-submodule
of $C^\bullet=\Gamma(\pi^! L)$ generated by
the sections of type $(\Delta_b,j\circ b\circ\pi)$
for all $b\in\Gamma(B)$;
\item the direct summand $\im(H_0\sQ)$
is comprised of all linear combinations
\[ \sum_k \phi_k\cdot\big(\liederivative{a_k},
i\circ a_k\circ\pi\big)+
\sum_l \psi_l\cdot(\interior{a_l},0) \]
where the coefficients $\phi_k$ are homogeneous
functions on $A[1]$ of odd degree
and the coefficients $\psi_l$ are homogeneous
functions on $A[1]$ of even degree; and
\item the direct summand $\im(\sQ H_0)$
is comprised of all linear combinations
\[ \sum_k \phi_k\cdot\big(\liederivative{a_k},
i\circ a_k\circ\pi\big)+
\sum_l \psi_l\cdot(\interior{a_l},0) \]
where the coefficients $\phi_k$ are homogeneous
functions on $A[1]$ of even degree
and the coefficients $\psi_l$ are homogeneous
functions on $A[1]$ of odd degree.
\end{enumerate}
\end{remark}

\subsection{Homotopy transfer of \texorpdfstring{$L_\infty$}{Loo}-algebroids}

It was proved in~\cite[Proposition~4.1]{MR4091493} that a splitting
of the short exact sequence \eqref{eq:splitting1} determines
an $L_\infty$-algebroid structure on the graded vector bundle
$\cL: =A[1] \times_M L/A \to A[1]$. 
Its multi-anchor maps and multi-brackets can be described explicitly
as follows \cite[Lemma~4.5]{MR4091493}:
\begin{enumerate}
\item The nullary anchor $\rho_0$ is the Chevalley--Eilenberg differential
$\dA:\Gamma(\Lambda^{\bullet}A^\vee)\to\Gamma(\Lambda^{\bullet+1}A^\vee)$
of the Lie algebroid $A$.
\item The unary anchor $\rho_1: \cL\to T_{\LADA[1]}$ is determined by
\[ \rho_1(b)=\Delta_b,\quad \forall b\in \sections{B} .\]
\item The binary anchor $\rho_2: \Lambda^2\cL\to T_{\LADA[1]}$ is determined by
\[ \rho_2(b_1,b_2)= \iota_{p\baL{b_1}{b_2}} ,\qquad \forall b_1,b_2\in \sections{B} .\]
\item The unary bracket $l_1$ coincides with the Chevalley--Eilenberg differential
\[ \dAB:\Gamma(\Lambda^{\bullet}A\otimes B)\to\Gamma(\Lambda^{\bullet+1}A\otimes B) .\]
\item The binary bracket $l_2: \Lambda^2\sections{\cL}\to \sections{\cL}$ is determined by
\[ l_2(b_1,b_2)= \quotientmapLB\baL{ {b_1}}{ {b_2}}
\qquad \forall b_1,b_2\in \sections{B} .\]
\item The ternary bracket $l_3: \Lambda^3\sections{\cL}\to \sections{\cL}$ is determined by
\[ l_3(b_1,b_2,b_3)=0,\qquad \forall b_1,b_2,b_3\in \sections{B} .\]
\item All higher brackets $l_i$ ($i\geqslant 4$), and anchors $\rho_{j}$ ($j\geqslant 3$) vanish.
\end{enumerate}

As a preparation for next section, we prove here that this 
$L_\infty$-algebroid is actually induced by the dg Lie 
algebroid structure on $\pi^! L \to A[1]$
via \emph{homotopy transfer of dg Lie algebroids}.
In order to do this, we begin explaining
how an $L_\infty$-algebroid structure can be transferred 
along appropriate contraction data.
We will use an appropriate tensor trick
for $L_\infty$-algebroids, combined with the 
\emph{homological perturbation lemma}.
To the best of our knowledge, this result is new
and should be of independent interest.

We first introduce the following

\begin{definition}
Let $\mathcal{L}\to\mathcal{M}$ and $\mathcal{B}\to\mathcal{M}$
be dg vector bundles over a dg manifold
$(\mathcal{M},d)$.
A contraction of dg vector bundles is a triple
$(P, I, H)$ of dg vector bundle maps covering the identity of $\mathcal{M}$
such that its induced maps on sections, also denoted $(P, I, H)$ by abuse of notations,
is a contraction of cochain complexes
\begin{equation}\label{eq:contr_dg_vb}
\begin{tikzcd}
(\mathcal{L},d_\mathcal{L}) \arrow[r, "P", shift left] \arrow["H", loop left] &
(\mathcal{B},d_\mathcal{B}). \arrow[l, "I", shift left]
\end{tikzcd} 
\end{equation}
\end{definition}

\begin{theorem}[Homotopy transfer for $L_\infty$-algebroids]
\label{theor:ht_L_infty_alg}
Let $\mathcal{L}\to\mathcal{M}$
be an $L_\infty$-algebroid with finite multi-anchor maps and
multi-brackets, and $\mathcal{B}\to\mathcal{M}$
be a dg vector bundle.
A contraction of dg vector bundles \eqref{eq:contr_dg_vb}
induces an $L_\infty$-algebroid structure on $\mathcal{B}$
extending the dg vector bundle structure.
\end{theorem} 

Before providing the proof of Theorem~\ref{theor:ht_L_infty_alg},
we recall an important lemma, called \emph{the tensor trick},
which will also play an important role in the next section.

\begin{lemma}[{\cite{MR4485797} and \cite[Lemma~A.7]{arXiv:2103.08096}}]
\label{lem:tensor-trick}
Let $(\cR,d)$ be a \dga.
Given a contraction of dg $\cR$-modules
\[ \begin{tikzcd}
(M,d_M) \arrow[r, "P", shift left] \arrow["H", loop left] &
(N,d_N), \arrow[l, "I", shift left]
\end{tikzcd} \]
there exists an induced contraction on the symmetric tensor modules:
\[ \begin{tikzcd}
(S_{\cR}M,d_M) \arrow[r, "P_S", shift left] \arrow["H_S", loop left] &
(S_{\cR}N,d_N), \arrow[l, "I_S", shift left]
\end{tikzcd}\]
where
\begin{align}
P_S &= \bigoplus_{n\geq 0} P^{\odot n} ,&
I_S &= \bigoplus_{n\geq 0} I^{\odot n} ,&
H_S &= \bigoplus_{n\geq 0} \sum_{i=1}^{n}
(I\circ P)^{\odot (i-1)}\odot H\odot\id_M^{\odot (n-i)}. \label{eq:sym}
\end{align}
Here it is understood that
$(S^0_\cR M,d_M) \cong (S^0_\cR N,d_N) \cong (\cR,d)$.
\end{lemma}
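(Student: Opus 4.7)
The plan is to reduce the statement to a degree-by-degree verification on each symmetric power $S^n_{\cR} M$, using a telescoping identity to relate the symmetric-power homotopy to the original one. The preliminary checks are straightforward: since $P$ and $I$ are chain maps and the differentials on $S_{\cR} M$ and $S_{\cR} N$ are extended as derivations, the maps $P_S$ and $I_S$ are chain maps; the identity $P_S I_S = \id_{S_{\cR} N}$ reduces on each $S^n_{\cR} N$ to the $n$-fold symmetric product of $PI = \id_N$.

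The heart of the argument is the homotopy identity $\id_{S_{\cR} M} - I_S P_S = [d, H_S]$, which I would verify on each $S^n_{\cR} M$ separately. The elementary telescoping
\[ \id_M^{\odot n} - (IP)^{\odot n} = \sum_{i=1}^{n} (IP)^{\odot(i-1)} \odot (\id_M - IP) \odot \id_M^{\odot(n-i)} ,\]
combined with the original homotopy relation $\id_M - IP = [d, H]$, yields
\[ \id_M^{\odot n} - (IP)^{\odot n} = \sum_{i=1}^{n} (IP)^{\odot(i-1)} \odot [d, H] \odot \id_M^{\odot(n-i)} .\]
The right-hand side is exactly the \emph{diagonal part} of $[d, H_S^{(n)}]$, i.e.\ the contribution in which $d$ and $H$ act on the same tensor factor. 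What remains is to show that the \emph{off-diagonal} contributions cancel. These arise in matched pairs indexed by ordered pairs of distinct slots: the term in which $d$ lands on slot $k$ of $d \circ H_S^{(n)}$ (with $H$ at slot $i$) is cancelled by the term in which $H$ lands on slot $i$ of $H_S^{(n)} \circ d$ (with $d$ at slot $k$), because $d$ commutes with both $IP$ and $\id_M$ (as $I$ and $P$ are chain maps) and the relevant Koszul signs come out opposite.

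To control the sign combinatorics -- which is the main technical obstacle, since $H$ has degree $-1$ and sliding $d$ past an $H$-slot produces a nontrivial Koszul sign that is easy to mishandle -- I would first perform the entire computation on the tensor algebra $T_{\cR} M$, where slot positions are literal and signs are transparent, and then descend to $S_{\cR} M$ by symmetrization, noting that every operator involved is $\mathfrak{S}_n$-equivariant. Finally, if the notion of contraction in use includes the side conditions $H_S^2 = 0$, $H_S I_S = 0$, and $P_S H_S = 0$, all three follow by direct substitution into the explicit formula~\eqref{eq:sym} for $H_S$: $P_S H_S = 0$ reduces to $PH = 0$ together with $PI = \id_N$; $H_S I_S = 0$ reduces to $HI = 0$; and $H_S^2 = 0$ reduces to $H^2 = 0$ together with one of $HI = 0$ or $PH = 0$ used to kill the cross terms between distinct slots in the telescoping expression for $H_S$.
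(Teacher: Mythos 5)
The paper offers no proof of Lemma~\ref{lem:tensor-trick}: it is invoked as a known result, with pointers to \cite{MR4485797} and \cite[Lemma~A.7]{arXiv:2103.08096}, so there is no in-text argument to compare against. Your argument is the standard proof found in those references and its core is correct: $P_SI_S=\id$ and the chain-map properties are immediate, and on each tensor power the telescoping identity together with the slot-by-slot cancellation of off-diagonal terms (where the opposite Koszul signs produced by $|H|=-1$ against $|d|=+1$ are exactly what lets $d\circ IP=IP\circ d$ kill the mixed terms) yields $[d,H_S^{(n)}]=\id^{\otimes n}-(IP)^{\otimes n}$. Two caveats on the finishing steps. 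First, the descent from $T^n_{\cR}M$ to $S^n_{\cR}M$ is \emph{not} by equivariance of the homotopy: the operator $\sum_i(IP)^{\otimes(i-1)}\otimes H\otimes\id^{\otimes(n-i)}$ is not $\mathfrak{S}_n$-equivariant (a transposition does not permute the summands, since $IP\neq\id$), and the unsymmetrized formula is not even well defined on $S^2_{\cR}M$ applied to an ordered product. What saves the day is that $d$ and $(IP)^{\otimes n}$ \emph{are} equivariant, so conjugating the homotopy by each permutation and averaging --- equivalently, reading the $\odot$ of operators in~\eqref{eq:sym} as pre- and post-composition with the symmetrizer --- still gives a homotopy between $\id$ and $I_SP_S$ on the symmetric part. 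Second, in the side-condition bookkeeping, $H_S^2=0$ at the tensor level needs \emph{both} $HI=0$ (for the cross terms $T_i\circ T_j$ with $i<j$, where slot $i$ carries $H\circ IP$) and $PH=0$ (for $i>j$, where slot $j$ carries $IP\circ H$), not just one of the two; and after symmetrization $H_S^2=0$ can fail altogether. Neither caveat touches the homotopy relation and $P_SI_S=\id$, which is all the paper subsequently uses.
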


\begin{remark}\label{rem:H_S_der}
Let $M, N, P, I, H$ be as in Lemma~\ref{lem:tensor-trick}.
First of all, note that $P_S, I_S$ are morphisms of cdgas.
Now, put $\mathcal{K}= \ker P $ and $\mathcal{I}= \operatorname{im} I$,
so that $M = K \oplus I$, and
$S_{\cR} M \cong S_\cR \mathcal{K}\otimes_\cR S_\cR \mathcal{I}$.
It is easy to see, using the explicit formulas \eqref{eq:sym}, that
\begin{enumerate}
\item $H_S$ is $S_\cR \mathcal{I}$-linear, and
\item for all $k_1, \cdots, k_p \in \mathcal{K}$,
\[ H_S (k_1 \odot \cdots \odot k_p) = \frac{1}{p} \sum_{i = 1}^p (-1)^{|k_1| + \cdots + |k_{i-1}|} k_1 \odot \cdots \odot H (k_i) \odot \cdots \odot k_p .\]
\end{enumerate}
In other words, on elements of $S_\cR^p \mathcal{K} \otimes S_\cR \mathcal{I} \subseteq S_\cR M$, the homotopy $H_S$ acts as $1/p$ times the unique $S_\cR\mathcal{I}$-linear derivation of $S_\cR M$ extending $H$. This simple remark will be useful later.
\end{remark}

\begin{proof}[Proof of Theorem~\ref{theor:ht_L_infty_alg}]
Consider the dual dg vector bundles $(\mathcal{L}^\vee, d_{\mathcal{L}^\vee}), (\mathcal{B}^\vee, d_{\mathcal{B}^\vee})$ of $(\mathcal{L}, d_\mathcal{L}), (\mathcal{B}, d_\mathcal{B})$, respectively. 
The contraction \eqref{eq:contr_dg_vb} induces a \emph{transpose contraction} of dg vector bundles
\[ \begin{tikzcd}
(\mathcal{L}^\vee[-1],d_{\mathcal{L}^\vee}) \arrow[r, "I^\vee", shift left]
\arrow["H^\vee", loop left] &
(\mathcal{B}^\vee[-1],d_{\mathcal{B}^\vee}). \arrow[l, "P^\vee", shift left]
\end{tikzcd} \]

From Lemma~\ref{lem:tensor-trick},
 we obtain a contraction
\[ \begin{tikzcd}
\big(\Gamma (S (\mathcal{L}^\vee[-1]),d_{\mathcal{L}^\vee}\big) \arrow[r, "I^\vee_S", shift left] \arrow["H^\vee_S", loop left] &
\big(\Gamma (S (\mathcal{B}^\vee[-1]),d_{\mathcal{B}^\vee}\big). \arrow[l, "P^\vee_S", shift left]
\end{tikzcd} \]

It is well-known \cite[Proposition~A.1]{MR4091493} 
that a $\ZZ$-vector bundle $\cL\to\cM$
is an $L_\infty$-algebroid if and only if $\cL[1]$ is a dg manifold
whose homological vector field $D_\mathcal{L}$ is tangent to the zero section
$\cM\xto{0}\cL[1]$.
When $\mathcal{L}\to\mathcal{M}$
an $L_\infty$-algebroid with finite multi-anchor maps and
multi-brackets,
$\Gamma(S(\mathcal{L}^\vee[-1]))$ is a differential graded subalgebra
of $(C^\infty(\mathcal{L}[1]),D_\mathcal{L})$. That is, it is preserved
by the homological vector field $D_\mathcal{L}$. We denote by the same symbol
$D_\mathcal{L}:\Gamma(S(\mathcal{L}^\vee[-1]))
\to\Gamma(S(\mathcal{L}^\vee[-1]))$ the differential
induced by the restriction and interpret it
as a perturbation of $d_{\mathcal{L}^\vee}$.
The operator
$H_S^\vee\circ(D_\mathcal{L}-d_{\mathcal{L}^\vee})$ is locally nilpotent.
Indeed, from Remark~\ref{rem:H_S_der}, $H_S^\vee$, hence
$H_S^\vee\circ(D_\mathcal{L}-d_\mathcal{L^\vee})$,
lowers by $1$ the order with respect to the filtration
\[ \cdots\subseteq\bigoplus_{i \leq p} S^i \mathcal{K} \otimes S \mathcal{I}
\subseteq \bigoplus_{i \leq p+1} S^i \mathcal{K} \otimes S \mathcal{I}
\subseteq \cdots \subseteq \Gamma (S \mathcal{L}^\vee[-1]) .\]
It follows that the classical homological perturbation lemma is applicable and we get a new contraction diagram:
\[ \begin{tikzcd}
\big(\Gamma (S \mathcal{L}^\vee[-1]),D_\mathcal{L}\big)
\arrow[r, "I^\mathsf{hp}_S", shift left] \arrow["H^\mathsf{hp}_S", loop left] &
\big(\Gamma (S \mathcal{B}^\vee[-1]),d_\mathcal{B}^\mathsf{hp}\big).
\arrow[l, "P^\mathsf{hp}_S", shift left]
\end{tikzcd} \]
We claim that the perturbation $d_\mathcal{B}^\mathsf{hp}$
is a derivation of $\Gamma (S (\mathcal{B}^\vee[-1]))$.
To see this, note that $d_\mathcal{B}^\mathsf{hp}$
is actually given by the following formula:
\begin{equation}\label{eq:d_B^hp}
d_\mathcal{B}^\mathsf{hp} = d_{\mathcal{B}^\vee} + I_S^\vee\circ
\sum_{i=0}^\infty \big((d_{\mathcal{L}^\vee} - D_\mathcal{L}) \circ H_S^\vee \big)^k
\circ (d_{\mathcal{L}^\vee} - D_\mathcal{L}) \circ P_S^\vee
.\end{equation}
Hence it suffices to show that
\begin{equation}\label{eq:d_hp_k}
I_S^\vee \circ \big((d_{\mathcal{L}^\vee} - D_\mathcal{L}) \circ H_S^\vee \big)^k
\circ (d_{\mathcal{L}^\vee} - D_\mathcal{L}) \circ P_S^\vee
\end{equation}
is a derivation for all $k=0,1,\dots$. For every $k$, let 
\[
\Delta_k := \big((d_{\mathcal{L}^\vee} - D_\mathcal{L}) \circ H_S^\vee \big)^k
\circ (d_{\mathcal{L}^\vee} - D_\mathcal{L}) \circ P_S^\vee .
\]
We prove, by induction on $k$, that, for any
$Q_1,Q_2\in\Gamma(S(\mathcal{L}^\vee[-1]))$,
\begin{equation}\label{eq:Delta_k}
\Delta_k (Q_1 \odot Q_2) = \Delta_k (Q_1) \odot P_S^\vee(Q_2) + (-1)^{|Q_1|} P_S^\vee (Q_1) \odot \Delta_k (Q_2) + \Sigma ,
\end{equation}
where $\Sigma \in \Gamma ((S \mathcal{L}^\vee[-1]))$ is a sum of terms of the form 
\begin{equation}\label{eq:sum_terms}
H_S^\vee (Q) \odot Q', \quad Q,Q' \in \Gamma (S \mathcal{L}^\vee[-1]).
\end{equation}
For $k = 0$, Equation~\eqref{eq:Delta_k} is clearly satisfied (with $\Sigma = 0$).
Now, assume that \eqref{eq:Delta_k} is satisfied for $k = n$,
and compute $\Delta_{n+1} (Q_1 \odot Q_2)$. It suffices to choose
$Q_i \in S^{p_i} \mathcal{K} \otimes S \mathcal{I}$ for some $p_i \in \NN_0$.
In this case, using Remark~\ref{rem:H_S_der}, the fact that
$P_S^\vee (Q_i ) \in S \mathcal{I}$ and $H_S^\vee \circ P_S^\vee = 0$, we 
have 
\[
\begin{aligned}
\Delta_{n+1} (Q_1 \odot Q_2) & = \big((d_{\mathcal{L}^\vee} - D_\mathcal{L}) \circ H_S^\vee \circ \Delta_n\big)(Q_1 \odot Q_2) \\
& = \big((d_{\mathcal{L}^\vee} - D_\mathcal{L}) \circ H_S^\vee \big)\big(\Delta_n (Q_1) \odot P_S^\vee(Q_2) + (-1)^{|Q_1|} P_S^\vee (Q_1) \odot \Delta_n (Q_2) + \Sigma
\big)\\
& = \Delta_{n+1}(Q_1) \odot P_S^\vee (Q_2) + (-1)^{|Q_1|} P_S^\vee (Q_1) \odot \Delta_{n+1} (Q_2) + \Sigma'
\end{aligned}
\]
where $\Sigma'$ is again a sum of terms of the form \eqref{eq:sum_terms}. The claim that \eqref{eq:d_hp_k} is a derivation, now immediately follows
from the fact that $I_S^\vee$ is a graded algebra morphism
and $I_S^\vee \circ H_S^\vee = 0$.

Next we show that $d_\mathcal{B}^{\mathrm{hp}}$ preserves
$\Gamma (S^0 \mathcal{B}^\vee [-1] ) = C^\infty(\mathcal{M})$,
where it acts as $d$ (the homological vector field on $\mathcal{M}$).
But this follows immediately from~\eqref{eq:d_B^hp}, using the fact
that $P^\vee_S, I^\vee_S, d_{\mathcal{L}^\vee}, D_\mathcal{L}, d_{\mathcal{B}^\vee}, H_S^\vee$ all preserve $C^\infty(\mathcal{M})$, and specifically, $d_{\mathcal{B}^\vee}$ acts as $d$, and $H^\vee_S$ vanishes on $C^\infty(\mathcal{M})$.
Hence $d_{\mathcal{B}^\vee}^{\mathrm{hp}}$ 
is indeed a homological vector field on $\mathcal{B}[1]$ tangent to
the zero section of the vector bundle $\mathcal{B}[1] \to \mathcal{M}$.
Therefore $(\mathcal{B}[1], d_{\mathcal{B}^\vee}^{\mathrm{hp}})$ is the dg
manifold associated to an $L_\infty$-algebroid structure on
$\mathcal{B}\to\mathcal{M}$ as desired.
\end{proof}

\begin{theorem}\label{prop:L_infty_alg}
Consider the contraction \eqref{eq:contraction_0}
as in Theorem~\ref{lem:contraction_0}.
The $L_\infty$-algebroid structure on the graded vector bundle
$A[1] \times_M L/A \to A[1]$ induced by the homotopy contraction
in Theorem~\ref{theor:ht_L_infty_alg}
coincides with the one as in \cite[Proposition 4.1]{MR4091493}.
\end{theorem}

\begin{proof}
Note that, shifting by $1$ the fiber degree of the vector bundle
$A[1] \times_M L/A \to A[1]$, we get the graded manifold
$A[1] \times_M L/A[1] = A[1] \times_M B[1] \cong L[1]$,
and recall from~\cite[Proposition~4.1]{MR4091493} that the homological vector field on
$L[1]$ corresponding to the $L_\infty$-algebroid structure on
$A[1] \times L/A \to A[1]$ is just the Chevalley--Eilenberg differential $d_L$.

Consider the contraction \eqref{eq:contraction_0}
as in Theorem~\ref{lem:contraction_0}.
As $\pi^! L \to A[1]$ is a dg Lie algebroid, in particular
an $L_\infty$-algebroid,
Theorem~\ref{theor:ht_L_infty_alg}
implies that there is an induced
$L_\infty$-algebroid structure
on $A[1] \times L/A \to A[1]$.
To prove Theorem~\ref{prop:L_infty_alg}, it suffices to 
show that the corresponding homological
vector field $d^{\mathrm{hp}}$ on $A[1] \times L/A[1] \cong L[1]$
given by the homological perturbation lemma as in the proof
of Theorem~\ref{theor:ht_L_infty_alg} coincides with $d_L$.
This can be done using the formulas in Remark~\ref{flageolet}.
Here we provide a simpler proof in local coordinates.

Choose local coordinates $(x^1, \cdots, x^n)$ on $M$
and a local frame $(e_1, \cdots , e_m)$ for $A$. Complete $(e_1, \cdots, e_m)$
to a local frame $(e_1, \cdots, e_m, j\circ g_1, \cdots, j \circ g_p)$ of $L$
by adding the image $(j\circ g_1, \cdots, j \circ g_p)$ via the splitting
$j : B \to L$ of some local frame $(g_1, \cdots, g_p)$ of $B$.
The coordinates $(x^1, \cdots, x^n)$, together with the local frame
$(e_1, \cdots, e_m, j \circ g_1, \cdots, j \circ g_p)$ 
on $L$ induce
a local chart $(x^1, \cdots, x^n, \xi^1, \cdots, \xi^m, \eta^1, \cdots, \eta^p)$
on $A[1] \times_M B[1] \cong L[1]$ where $(x^1, \cdots, x^n, \xi^1, \cdots, \xi^m)$
are coordinates on $A[1]$ and $(x^1, \cdots, x^n, \eta^1, \cdots, \eta^p)$
are coordinates on $B[1]$. According to Equation
\eqref{eq:dA}, 
the homological vector field $d_A$ on $A[1]$ reads
\[
d_A = \sum_{i, j} a_i^j (x) \xi^i \frac{\partial}{\partial x^j} - \frac{1}{2} \sum_{i,j,k} c_{ij}^k (x) \xi^i \xi^j \frac{\partial}{\partial \xi^k}.
\]
Similarly, the homological vector field $d_L$ on $L[1]$ reads
\begin{equation}\label{eq:d_L}
\begin{aligned}
d_L &= d_A - \sum_{\mu, j} B^j_\mu (x) \eta^\mu \frac{\partial}{\partial x^j} \\
& \quad - \sum_k\left(\sum_{j, \mu}C^k_{j\mu}(x) \xi^j \eta^\mu + \frac{1}{2} \sum_{\mu, \nu} D_{\mu\nu}^k (x) \eta^\mu \eta^\nu \right) \frac{\partial}{\partial \xi^k} \\
& \quad - \sum_\sigma\left(\sum_{j, \mu}E^\sigma_{j\mu}(x) \xi^j \eta^\mu + \frac{1}{2} \sum_{\mu, \nu} F_{\mu\nu}^\sigma (x) \eta^\mu \eta^\nu \right) \frac{\partial}{\partial\eta^\sigma}.
\end{aligned}
\end{equation}
The coordinates $(x^1, \cdots, x^n)$, together with the local frame 
 $(e_1, \cdots, e_m, j \circ g_1, \cdots, j \circ g_p)$
on $L$ do also determine a local frame
on
$\pi^! L\cong T_{A[1]}\times_{T_M}L$:
\[
(\dot{\eta}_1, \cdots, \dot{\eta}_m, A_1, \cdots, A_m, B_1, \cdots, B_p)
\]
given by
\[
\dot{\eta}_j = \left( \frac{\partial}{\partial \eta^j}, 0\right), \quad A_i = \left( a^j_i (x) \frac{\partial}{\partial x^j}, e_i \right), \quad B_\mu = \left( B_\mu^j \frac{\partial}{\partial x^j}, j \circ g_\mu \right),
\]
which in turn determines a local chart
\[
(x^1, \cdots, x^n, \xi^1, \cdots, \xi^n, \dot{\xi}{}^1, \cdots, \dot{\xi}{}^m, A^1, \cdots, A^m, B^1, \cdots, B^p)
\]
on $\pi^! L [1]$.

Next, we write all the relevant operators appearing in the perturbation formula \eqref{eq:d_B^hp}
in these coordinates. In the present situation, $\mathcal{B}=A[1]\times_M B$
and $d_{\mathcal{B}^\vee}$ is the homological vector field $d_A^{\mathsf{Bott}}$
on $A[1]\times_M B[1]$ corresponding to the Bott connection which, in the above coordinates, is given by
\begin{equation}\label{eq:d_A^Bott}
d_A^{\mathsf{Bott}} = d_A - \sum_{j, \mu, \sigma}E^\sigma_{j\mu}(x) \xi^j \eta^\mu \frac{\partial}{\partial\eta^\sigma}.
\end{equation}
Moreover $\mathcal{L}= \pi^! L$,
and the differentials $d_{\mathcal{L}^\vee},D_\mathcal{L}$ are
\begin{itemize}
\item the homological vector field on $\pi^! L [1]$ corresponding to the coboundary operator $\sQ=[s_{\boldi},\argument]$,
\item the homological vector field on $\pi^! L [1]$ corresponding to the dg Lie algebroid structure,
\end{itemize}
respectively. So the difference $D_\mathcal{L} - d_{\mathcal{L}^\vee}$ is the homological vector field $d_{\mathsf{Lie}}$ on $\pi^! L [1]$ corresponding to the graded Lie algebroid structure on $\pi^! L \to A[1]$, and it is locally given by
\begin{equation}\label{eq:d_Lie}
\begin{aligned}
d_{\mathsf{Lie}} & = \sum_j\dot \xi{}^j \frac{\partial}{\partial \xi^j} + \sum_i\left(\sum_j a^i_j(x) A^j + \sum_\mu B^i_\mu (x) B^\mu\right) \frac{\partial}{\partial x^i} \\
& \quad - \sum_k \left( \frac{1}{2} \sum_{i,j}c_{ij}^kA^iA^j + \sum_{j, \mu} C_{j\mu}^k(x) A^j B^\mu + \frac{1}{2} \sum_{\mu, \nu} D^k_{\mu\nu}(x) B^\mu B^\nu\right) \frac{\partial}{\partial A^k}\\
& \quad - \sum_\sigma \left( \sum_{j, \mu}E^\sigma_{j\mu}(x) A^j B^\mu + \frac{1}{2} \sum_{\mu, \nu} F_{\mu\nu}^\sigma (x) B^\mu B^\nu\right) \frac{\partial}{\partial B^\sigma}.
\end{aligned}
\end{equation}
The inclusion $P_S^\vee$ is the pull-back along the vector bundle projection $P_0 : \pi^! L[1] \to A[1] \times_M B[1]$ and it is locally given by
\begin{equation}\label{eq:P_S^vee}
P_S^\vee (x^i) = x^i, \quad P_S^\vee (\xi^j) = \xi^j, \quad P_S^\vee (\eta^\mu) = B^\mu.
\end{equation}
The projection $I_S^\vee$ is the pull-back along the vector bundle inclusion $I_0 : A[1] \times_M B[1] \to \pi^! L[1]$ and it is locally given by
\begin{equation}\label{eq:I_S^vee}
I_S^\vee (x^i) = x^i, \quad I_S^\vee (\xi^j) = \xi^j, \quad I_S^\vee (\dot\xi{}^k) = C_{j\mu}^k \xi^j \eta^\mu, \quad I^\vee_S (A^l) = 0, \quad I^\vee_S (B^\nu) = \eta^\mu.
\end{equation}
Finally, $H_S^\vee$ acts on coordinates as the transpose of the homotopy $H_0 : \pi^! L[1] \to \pi^! L[1]$, i.e.
\begin{equation}\label{eq:H_S^vee}
H_S^\vee (x^i) = 0, \quad H_S^\vee (\xi^j) = 0, \quad H_S^\vee (\dot\xi{}^k) = A^k, \quad H^\vee_S (A^l) = 0, \quad H^\vee_S (B^\nu) = 0.
\end{equation}

It is now easy to see in coordinates, combining \eqref{eq:d_A^Bott}, \eqref{eq:d_Lie}, \eqref{eq:P_S^vee}, \eqref{eq:I_S^vee}, \eqref{eq:H_S^vee}, and the 
fact that $d^{\mathrm{hp}}$ is a derivation, that
\begin{itemize}
\item the terms with $i > 1$ in the perturbation expansion \eqref{eq:d_B^hp}
 of $d^{\mathrm{hp}}$ vanish, while
\item the rest amounts exactly to $d_L$
(as locally given by~\eqref{eq:d_L}).
\end{itemize}
This concludes the proof.
\end{proof}

\section{\texorpdfstring{$A_\infty$}{Aoo}-algebras from Lie pairs}\label{sec:A_infty_LP}
\subsection{Statement of the main theorem}

In this section, we show how an $A_\infty$-algebra
can be constructed from a Lie pair $(L,A)$
equipped with some appropriate additional data.
In order to state the main theorem, some preliminary
remarks are in order.

The universal enveloping algebra $\cU(L)$ of the Lie algebroid $L$
--- see~\cite{MR2653938,MR1687747} --- is naturally a module over the Lie--Rinehart algebra
$\big(\Gamma(A),R\big)$: the sections of $A$ and the functions on $M$ act by
multiplication from the left in the universal enveloping algebra $\cU(L)$.
This module structure descends to the quotient
\[ \cU_{L/A} := \frac{\cU(L)}{\cU(L) \cdot \Gamma (A)} .\]

It follows that
$\Gamma (\Lambda^\bullet A^\vee) \otimes_{R} \cU_{L/A}$
is also equipped with a differential, again abusively denoted $\dA$,
which turns $\left( \Gamma (\Lambda^\bullet A) \otimes_R \cU_{L/A}, \dA\right)$
into a dg module over the \dga\ $\left( \Gamma (\Lambda^\bullet A^\vee), \dA \right)$.

We are now ready to state our main result of the paper.

\begin{theorem}\label{theor:main}
Let $(L,A)$ be a Lie pair.
\begin{enumerate}
\item Each choice of splitting of the short exact sequence of vector bundles
\[ 0 \to A \to L \to L/A \to 0 ,\]
(and of some additional geometric data to be specified later)
determines an $A_\infty$-algebra structure
on $\Gamma(\Lambda^\bullet A^\vee)\otimes_R\cU_{L/A}$,
which extends the differential $\dA$.
\item The $A_\infty$-algebras arising from all possible
choices of splitting (and of additional geometric data)
are all mutually isomorphic.
\item In particular, the cohomology
$H\big(\Gamma(\Lambda^\bullet A^\vee)\otimes_R\cU_{L/A},\dA\big)$
carries a canonical graded associative algebra structure.
\end{enumerate}
\end{theorem}

\begin{remark}
Part~(1) of Theorem~\ref{theor:main} was proved in~\cite{MR3313214} for the case where
$L = T_M$ is the tangent bundle of $M$ and $A \subseteq L$ is an involutive distribution on $M$.
In that case, $\cU(L) = \cU(T_M)$ consists of linear differential operators
on $M$, and $\cU_{L/A}$ can be interpreted as differential operators on $M$
\emph{transverse to the foliation $\cF$} integrating the distribution $A$.
In turn, the cochain complex
$\left( \Gamma (\Lambda^\bullet A^\vee) \otimes_R \cU_{L/A}, \dA \right)$
can be considered as the space of
 \emph{differential operators on the leaf space
$M/\cF$ of the foliation $\cF$}.
It inherits an algebraic structure from $\cU_{L/A}$ (the differential operators on $M$
transverse to $\cF$): namely, the $A_\infty$-algebra mentioned in Theorem~\ref{theor:main}.
\end{remark}

The proof of Theorem~\ref{theor:main} consists of various steps, which we present in the form
of a list of propositions and lemmas as they might be of independent interest.

\subsection{Induced contraction of symmetric algebras}

We will need to consider the symmetric algebra $\Gamma (S \pi^! L)$
of the dg Lie algebroid $\pi^! L \to A[1]$.
The algebra $\Gamma (S \pi^! L)$ of sections of $S \pi^! L$
is a graded Poisson algebra with degree $0$ Poisson bracket $\{-,-\}$
induced by the Lie algebroid $\pi^! L$ in the
usual way, i.e.\ the Lie-Poisson structure
on $(\pi^! L)^\vee$.

More importantly for our purposes, the differential
$\sQ=[s_{\boldi},\argument]$ in $\Gamma (\pi^! L)$
extends uniquely to a \emph{homological Poisson derivation}
\[ D_S = \{ s_{\boldi},\argument \} :
\Gamma (S \pi^! L) \to \Gamma (S \pi^! L) ,\]
i.e.\ a derivation of degree $+1$ of both the associative product and the Poisson bracket,
endowing $\Gamma (S \pi^! L)$ with a structure of \emph{dg Poisson algebra}.
In particular, $\big(\Gamma(S\pi^! L),D_S\big)$ is a dg module over the 
\dga\ $(\Gamma(\Lambda^\bullet A^\vee),\dA)$.
We have an obvious isomorphism of dg modules:
\begin{equation}\label{eq:Delta1}
\big(\Gamma(S\pi^! L),D_S\big) \xto{\simeq}
\big(S_{\Gamma(\Lambda^\bullet A^\vee)} \Gamma(A[1];\pi^! L),D_S\big)
.\end{equation}

Now consider the bundle of symmetric algebras $S B \to M$
and the graded $\Gamma(\Lambda^\bullet A^\vee)$-module
$\Gamma(\Lambda^\bullet A^\vee\otimes S B)$.
The Bott $A$-connection on $B$ extends naturally to a flat $A$-connection on $SB$
with associated Chevalley--Eilenberg differential
\[ \dA : \Gamma (\Lambda^\bullet A^\vee \otimes S B)
\to \Gamma (\Lambda^{\bullet + 1} A^\vee \otimes S B) .\]
It is clear that both
$\big(\Gamma(\Lambda^\bullet A^\vee\otimes B),\dA\big)$
and $\big(\Gamma(\Lambda^\bullet A^\vee\otimes S B),\dA\big)$ are dg modules over the \dga\ $(\Gamma(\Lambda^\bullet A^\vee),\dA)$,
and we have an obvious isomorphism of dg modules
\begin{equation}\label{eq:Delta2}
\big(\Gamma(\Lambda^\bullet A^\vee\otimes S B),\dA\big) \xto{\simeq}
\big(S_{\Gamma(\Lambda^\bullet A^\vee)}\Gamma(\Lambda^\bullet A^\vee\otimes B),\dA\big)
\end{equation}

Now, as an immediate consequence, Lemma~\ref{lem:tensor-trick} together
with~\eqref{eq:Delta1}, and~\eqref{eq:Delta2} implies the following

\begin{proposition}
The contraction data \eqref{eq:contraction_0} extends naturally
to contraction data $(P_S, I_S, H_S)$:
\begin{equation}\label{eq:contraction:S}
\begin{tikzcd}
\big(\Gamma(S\pi^! L),D_S\big)
\arrow[r, "P_S", shift left] \arrow["H_S", loop left] &
\big(\Gamma(\Lambda^\bullet A^\vee\otimes S B),\dA\big)
\arrow[l, "I_S", shift left]
\end{tikzcd}
.\end{equation}
\end{proposition}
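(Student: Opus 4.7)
The plan is to invoke the tensor trick of Lemma~\ref{lem:tensor-trick} applied to the contraction of Theorem~\ref{lem:contraction_0}, and then transport the result through the canonical isomorphisms~\eqref{eq:Delta1} and~\eqref{eq:Delta2}.

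First I would confirm that~\eqref{eq:contraction_0} really is a contraction of dg modules over the dgca $(\Gamma(\Lambda^\bullet A^\vee),\dA)$: the three maps $P_0$, $I_0$, and $H_0$ are $\Gamma(\Lambda^\bullet A^\vee)$-linear by construction (this is explicit from the formulas~\eqref{eq:H00} and~\eqref{eq:SFO}, as well as from the definition of $P_0$), while the differentials $\dA$ on $\Gamma(\Lambda^\bullet A^\vee\otimes B)$ and $\sQ$ on $\Gamma(\pi^! L)$ are derivations with respect to the $\Gamma(\Lambda^\bullet A^\vee)$-module structures---for $\sQ$, this is noted in Remark~\ref{flageolet}. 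The hypotheses of Lemma~\ref{lem:tensor-trick} are therefore met with $\cR=(\Gamma(\Lambda^\bullet A^\vee),\dA)$, $(M,d_M)=(\Gamma(\pi^! L),\sQ)$, and $(N,d_N)=(\Gamma(\Lambda^\bullet A^\vee\otimes B),\dA)$.

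Next I would apply the tensor trick to obtain a contraction between $\big(S_\cR\Gamma(\pi^! L),\widetilde{\sQ}\big)$ and $\big(S_\cR\Gamma(\Lambda^\bullet A^\vee\otimes B),\widetilde{\dA}\big)$, where $\widetilde{\sQ}$ and $\widetilde{\dA}$ denote the unique degree-$+1$ derivations of the symmetric-algebra products extending $\sQ$ and $\dA$ respectively, and where the three structural maps are given by the explicit formulas~\eqref{eq:sym}. The key identifications at this stage are twofold: via~\eqref{eq:Delta1}, $\widetilde{\sQ}$ corresponds to $D_S=\{s_{\boldi},\argument\}$---both being the unique degree-$+1$ derivation of the commutative product on $\Gamma(S\pi^! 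L)$ restricting to $\sQ$ on linear sections---and, similarly, via~\eqref{eq:Delta2}, $\widetilde{\dA}$ corresponds to the Chevalley--Eilenberg differential of the Bott $A$-connection on $SB$, since by definition the latter is the derivation extension of the Bott connection on $B$.

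Transporting the tensor-trick contraction data along~\eqref{eq:Delta1} and~\eqref{eq:Delta2} then produces precisely the contraction~\eqref{eq:contraction:S}, with $P_S$, $I_S$, and $H_S$ given by the formulas in~\eqref{eq:sym}. There is no genuine obstacle here; the only point requiring verification is the identification of the two induced differentials on the symmetric algebras with $D_S$ and the Chevalley--Eilenberg differential on $\Gamma(\Lambda^\bullet A^\vee\otimes SB)$ respectively, and this follows immediately from the uniqueness of derivation-extensions of a degree-$+1$ operator once its action on a set of generators is fixed.
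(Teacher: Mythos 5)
Your proposal is correct and follows exactly the route the paper takes: the paper derives this proposition as an immediate consequence of the tensor trick (Lemma~\ref{lem:tensor-trick}) applied to the $\Gamma(\Lambda^\bullet A^\vee)$-linear contraction of Theorem~\ref{lem:contraction_0}, transported through the identifications~\eqref{eq:Delta1} and~\eqref{eq:Delta2}. Your added verifications --- that the hypotheses of the tensor trick are satisfied and that the induced differentials agree with $D_S$ and the Chevalley--Eilenberg differential on $\Gamma(\Lambda^\bullet A^\vee\otimes SB)$ by uniqueness of derivation extensions --- merely spell out what the paper leaves implicit.
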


\begin{remark}
It is simple to check that
$(P_S, I_S, H_S)$ is indeed a semifull algebra contraction
of dg commutative algebras.
Thus, according to \cite[Theorem 2.16]{MR4091493},
via homotopy transfer, it gives rise to
$L_\infty$-brackets on $\Gamma(\Lambda^\bullet A^\vee\otimes SB)$,
which make it into a $P_\infty$-algebra \cite{MR2304327,MR2180451}.
One can also check that its $l$th bracket, for any $l\geq 1$,
is of the weight $(1 -l)$. Therefore, according to
\cite[Proposition 3.8]{MR4091493}, such a $P_\infty$-algebra
is equivalent to an $L_\infty$-algebroid on $A[1]\times_M B\to A[1]$.
We can show that this coincides with the one obtained
from Theorem~\ref{prop:L_infty_alg}.
See~\cite{MR4091493,MR3313214,MR3300319} for details.
\end{remark}

\subsection{Projection map for contraction on \texorpdfstring{$\cU(\pi^! L)$}{U(pi! L)}}

In the next step towards the proof of Theorem~\ref{theor:main}
we consider the universal enveloping algebra $\cU(\pi^! L)$ of the dg
Lie algebroid $\pi^! L \to A[1]$. It is a graded associative algebra. Additionally,
the differential $\sQ=\lie{s_{\boldi}}{\argument}$ in
$\Gamma (\pi^! L)$ induces a differential
$D_\cU : \cU(\pi^! L) \to \cU(\pi^! L)$.
Namely, the homological section $s_{\boldi} \in \Gamma (\pi^! L)$
can also be seen as an element in $\cU(\pi^! L)$ via the canonical
embedding $\Gamma (\pi^! L) \hookrightarrow \cU(\pi^! L)$, and we put
\begin{equation}\label{eq:Dour}
D_\cU \Upsilon = [s_{\boldi},\Upsilon]
= s_{\boldi} \circc \Upsilon
- (-1)^{|\Upsilon|} \Upsilon \circc s_{\boldi}
,\end{equation}
for all $\Upsilon \in \cU (\pi^! L)$.
The differential $D_\cU$ gives to $\cU(\pi^! L)$ the
structure of a dg associative algebra, which is also
a dg module over $\big(\Gamma (\Lambda^\bullet A^\vee), \dA\big)$.

\begin{lemma}\label{lem:P_U}
There is a unique morphism of dg modules
\[ P_\cU : \cU(\pi^! L)
\to \Gamma(\Lambda^\bullet A^\vee)\otimes_R\cU_{L/A} \]
over the dg algebra $\big( \Gamma(\Lambda^\bullet A^\vee), \dA \big)$
satisfying
\begin{equation}\label{eq:P_U}
P_\cU\big((Y_1,v_1\circ\pi) \circc\cdots\circc (Y_n,v_n\circ\pi)\big)
=\class{v_1 \circc \cdots \circc v_n}
\end{equation}
for all pairs of the special type $(Y_1,v_1\circ\pi),\cdots,(Y_n,v_n\circ\pi)$
characterizing sections of the Lie algebroid $\pi^! L\to A[1]$.
The notation $\overline{u}$ refers to the equivalence class
in $\cU_{L/A} = \frac{\cU(L)}{\cU(L)\cdot\Gamma(A)}$
of an element $u\in\cU(L)$.
\end{lemma}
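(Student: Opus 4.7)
The plan is to construct $P_\cU$ by realizing $\Gamma(\Lambda^\bullet A^\vee) \otimes_R \cU_{L/A}$ as a left $\cU(\pi^! L)$-module and then setting $P_\cU(\Upsilon) := \Upsilon \cdot (1 \otimes \class{1_{\cU(L)}})$.

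Uniqueness is immediate: the algebra $\cU(\pi^! L)$ is generated, as an $R$-algebra, by $\Gamma(\Lambda^\bullet A^\vee)$ and $\Gamma(\pi^! L)$, and $\Gamma(\pi^! L)$ is generated, as a $\Gamma(\Lambda^\bullet A^\vee)$-module, by the special sections $(Y, v\circ\pi)$ (Section~\ref{doudou}); a $\Gamma(\Lambda^\bullet A^\vee)$-linear morphism is therefore completely determined by its values on products of special sections, which are prescribed by~\eqref{eq:P_U}.

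For existence, I would equip $\Gamma(\Lambda^\bullet A^\vee) \otimes_R \cU_{L/A}$ with a left $\cU(\pi^! L)$-module structure as follows. Elements $f \in \Gamma(\Lambda^\bullet A^\vee)$ act by left multiplication on the first tensor factor, and a section $(X, \nu) \in \Gamma(\pi^! L)$, with $\nu$ viewed canonically as an element of $\Gamma(\Lambda^\bullet A^\vee) \otimes_R \Gamma(L)$ via the identification of sections of $A[1]\times_M L \to A[1]$, acts by
\[
(X, \nu) \cdot (g \otimes \class{u}) := X(g) \otimes \class{u} + (-1)^{|X|\cdot|g|} \sum_I (g \wedge \xi^I) \otimes \class{v_I \circc u},
\]
where $\nu = \sum_I \xi^I \otimes v_I$. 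The formula is manifestly independent of the decomposition of $\nu$, and direct computation confirms the graded Leibniz rule $\sigma \cdot (f \cdot m) = \sigma(f) \cdot m + (-1)^{|\sigma||f|} f \cdot (\sigma \cdot m)$. By Rinehart's universal property, this prescription extends to a $\cU(\pi^! L)$-module structure once one checks Lie bracket compatibility $[\sigma_1, \sigma_2] \cdot m = \sigma_1 \cdot \sigma_2 \cdot m - (-1)^{|\sigma_1||\sigma_2|} \sigma_2 \cdot \sigma_1 \cdot m$ against the bracket formula~\eqref{eq:BRU}: the cross terms $X_1(\nu_2) - (-1)^{|X_2||\nu_1|} X_2(\nu_1)$ there are exactly what is needed to match the $X_i$-action on the $\Gamma(\Lambda^\bullet A^\vee)$-coefficients of $\nu_j$ in the iterated product. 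Defining $P_\cU(\Upsilon) := \Upsilon \cdot (1 \otimes \class{1_{\cU(L)}})$ then yields a $\Gamma(\Lambda^\bullet A^\vee)$-linear map by associativity, and iterating the action of special sections starting from $1 \otimes \class{1}$ (at each step $X(1) = 0$ and the multiplicative term collapses to $1 \otimes \class{v_i \circc u}$) produces exactly $1 \otimes \class{v_1 \circc \cdots \circc v_n}$, establishing~\eqref{eq:P_U}.

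For compatibility with the differentials, $\dA \circ P_\cU = P_\cU \circ D_\cU$, write $s_{\boldi} = (\dA, \boldi)$ with $\boldi = \sum_k \varepsilon^k \otimes i(e_k)$ for any pair $\{e_k\}, \{\varepsilon^k\}$ of dual local frames for $A$ and $A^\vee$. A direct application of the action formula (using $g \wedge \varepsilon^k = (-1)^{|g|}\varepsilon^k \wedge g$) gives
\[
s_{\boldi} \cdot (g \otimes \class{u}) = \dA(g) \otimes \class{u} + \sum_k (\varepsilon^k \wedge g) \otimes \class{i(e_k) \circc u},
\]
which is precisely the Chevalley--Eilenberg differential $\dA$ on $\Gamma(\Lambda^\bullet A^\vee) \otimes_R \cU_{L/A}$ for the left $\Gamma(A)$-action on $\cU_{L/A}$. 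Moreover, $s_{\boldi} \cdot (1 \otimes \class{1_{\cU(L)}}) = \sum_k \varepsilon^k \otimes \class{i(e_k)} = 0$ because each $i(e_k) \in \Gamma(A)$ forces $\class{i(e_k)} = 0$ in $\cU_{L/A}$. Combining these,
\[
P_\cU(D_\cU \Upsilon) = [s_{\boldi}, \Upsilon] \cdot (1 \otimes \class{1}) = s_{\boldi} \cdot P_\cU(\Upsilon) - (-1)^{|\Upsilon|} \Upsilon \cdot 0 = \dA\, P_\cU(\Upsilon).
\]
The main technical obstacle lies in the verification of Lie bracket compatibility in the existence step: the bookkeeping of graded signs arising from~\eqref{eq:BRU} is non-trivial, but all identities ultimately reduce mechanically to the defining bracket formula and the Lie--Rinehart relations in $\Gamma(L)$.
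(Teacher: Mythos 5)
Your proof is correct, but it takes a genuinely different route from the paper's. The paper obtains $P_\cU$ from a general functoriality statement for universal enveloping algebras along Lie algebroid morphisms: applied to the second projection $\pi^! L \cong T_{A[1]}\times_{T_M} L \to L$ covering $\pi : A[1]\to M$, this yields a filtered $\Gamma(\Lambda^\bullet A^\vee)$-linear map $\pi_\cU : \cU(\pi^! L)\to \Gamma(\Lambda^\bullet A^\vee)\otimes_R\cU(L)$, which is then composed with the quotient onto $\cU_{L/A}$; the cochain-map property is subsequently verified by a fairly long computation that expands $s_{\boldi}$ locally as $\sum_k \xi^k\cdot(\liederivative{e_k}, i\circ e_k\circ\pi) - (\dA\xi^k)\cdot(\interior{e_k},0)$ and pushes each term through $P_\cU$. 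You instead endow the target $\Gamma(\Lambda^\bullet A^\vee)\otimes_R\cU_{L/A}$ with a left $\cU(\pi^! L)$-module structure via the universal property of the enveloping algebra of the graded Lie--Rinehart pair $\big(\Gamma(\Lambda^\bullet A^\vee),\Gamma(\pi^! L)\big)$ and define $P_\cU$ as the action on the cyclic vector $1\otimes\class{1}$. What this buys is a markedly cleaner proof of compatibility with the differentials: once one knows that $s_{\boldi}$ acts as $\dA$ and annihilates $1\otimes\class{1}$ (because $\class{i(e_k)}=0$ in $\cU_{L/A}$), the identity $P_\cU\circ D_\cU = \dA\circ P_\cU$ follows in two lines from associativity of the module action, replacing the paper's page of local computation. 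The cost is that the Lie--Rinehart module axioms (graded Leibniz rule and bracket compatibility against~\eqref{eq:BRU}) must actually be verified; you only assert that the signs work out, but the check does go through --- the cross terms $X(\nu')-(-1)^{\degree{X'}\degree{\nu}}X'(\nu)$ in~\eqref{eq:BRU} match exactly the commutators of the $X_i$-action on the form coefficients, and the term $\lie{\nu}{\nu'}$ matches $\class{(vv'-v'v)u}$ --- and it is no harder than the unproved ``general fact'' about $F_\cU$ on which the paper relies. The two constructions necessarily agree, since both are $\Gamma(\Lambda^\bullet A^\vee)$-linear and take the values prescribed by~\eqref{eq:P_U} on products of $\pi$-related sections, which generate $\cU(\pi^! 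L)$ as a $\Gamma(\Lambda^\bullet A^\vee)$-module.
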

\begin{proof}

The statement is a consequence of the following general fact.
Given $\ZZ$-graded Lie algebroids $\cL_\cN \to \cN$ and $\cL_\cM \to \cM$
and a Lie algebroid morphism $F : \cL_\cN \to \cL_\cM$ covering a smooth map
$f:\cN\to\cM$,
there is a unique $C^\infty(\cN)$-linear map of filtered modules
\begin{equation}\label{eq:F_U}
F_\cU : \cU(\cL_\cN) \to C^\infty(\cN)
\underset{C^\infty(\cM)}{\otimes} \cU(\cL_\cM)
\end{equation}
such that
\[ F_\cU (V_1 \circc \cdots \circc V_n) = v_1 \circc \cdots \circc v_n \]
for any $n$-tuple of pairs $(V_i, v_i)$, where
$V_i \in \Gamma (\cL_\cN)$
and $v_{i} \in \Gamma (\cL_\cM)$ are $F$-related sections, i.e.~$F \circ V_i =
v_i \circ f$, $i = 1, \cdots, n$.
In~\eqref{eq:F_U}, $C^\infty(\cN)$ is equipped with the structure of a
$C^\infty(\cM)$-module via the pull-back $f^\ast : C^\infty(\cM) \to
C^\infty(\cN)$.

Applying the discussion above to the case that
$\cN = A[1]$, $\cL_\cN = \pi^! L$, 
$f = \pi : A[1] \to M$,
and $F=\pr_2: \pi^! L\cong A[1]\times_{TM}L\to L$, 
we obtain a degree $0$ $\Gamma (\Lambda^\bullet A^\vee) \cong
C^\infty(A[1])$-linear map of filtered modules
\[ \pi_\cU : \cU(\pi^! L) \to \Gamma (\Lambda^\bullet A^\vee)
\underset{R}{\otimes} \cU(L) \]
such that
\[ \pi_\cU \big((Y_1,v_1\circ\pi) \circc\cdots\circc (Y_n,v_n\circ\pi)\big)
= v_1 \circc\cdots\circc v_n \]
for all characterizing pairs of the special type
$(Y_i,v_i\circ\pi) \in \Gamma (\pi^! L)$. Composing with the projection
\[ \Gamma (\Lambda^\bullet A^\vee) \otimes_R \cU(L)
\to \Gamma (\Lambda^\bullet A^\vee) \otimes_R \cU_{L/A} \]
we obtain the desired homomorphism $P_\cU$.
The uniqueness follows from the fact that compatible sections generate
the $\Gamma (\Lambda^\bullet A^\vee) $-module $\Gamma (\pi^! L)$.

It remains to check that $P_\cU$ is a cochain map.
It is simple to see that the operator
\[ P_\cU\circ D_\cU -\dA\circ P_\cU:
\quad\cU(\pi^! L)\to\Gamma(\Lambda^{\bullet+1} A^\vee)
\otimes_R\cU_{L/A} \]
is $\Gamma(\Lambda^\bullet A^\vee)$-linear.
Therefore, it suffices to check that
$P_\cU\circ D_\cU$ and $\dA\circ P_\cU$
agree on elements of the form
$(Y_1,v_1\circ\pi) \circc \cdots \circc (Y_n,v_n\circ\pi)$,
where $(Y_i,v_i\circ\pi) \in \Gamma(\pi^! L)$.

It follows from Equations~\eqref{eq:Dour}, \eqref{eq:Darwin}, and~\eqref{eq:P_U} that
\[ \begin{split}
& P_\cU \circ D_\cU \big( (Y_1, v_1 \circ \pi) \circc
\cdots \circc (Y_n, v_n \circ \pi) \big) \\
& = P_\cU \left(s_{\boldi} \circc
(Y_1, v_1 \circ \pi) \circc \cdots \circc (Y_n, v_n \circ \pi)
- (-1)^{|Y_1| + \cdots + |Y_n|} (Y_1, v_1 \circ \pi) \circc \cdots \circc
(Y_n, v_n \circ \pi) \circc s_{\boldi} \right) \\
& = \sum_k P_\cU \big( \xi^k\cdot(\liederivative{e_k}, i\circ e_k \circ \pi)
\circc (Y_1, v_1 \circ \pi) \circc \cdots \circc (Y_n, v_n \circ \pi) \big) \\
& \quad - \sum_k P_\cU \big( (d_A \xi^k\cdot\interior{e_k}, 0)
\circc (Y_1, v_1 \circ \pi) \circc \cdots \circc (Y_n, v_n \circ \pi) \big) \\
& \quad - (-1)^{|Y_1|+\cdots+|Y_n|} \sum_k P_\cU \big( (Y_1, v_1 \circ \pi)
\circc \cdots \circc (Y_n, v_n \circ \pi) \circc \xi^k\cdot
(\liederivative{e_k}, i\circ e_k \circ \pi) \big) \\
& \quad + (-1)^{|Y_1|+\cdots+|Y_n|} \sum_k P_\cU \big( (Y_1, v_1 \circ \pi)
\circc \cdots \circc (Y_n, v_n \circ \pi) \circc
(d_A\xi^k\cdot\interior{e_k},0) \big) \\
& = \sum_k \xi^k\cdot \class{i(e_k)\cdot v_1 \cdot \cdots\cdot v_n}
- \sum_k \class{0\cdot v_1 \cdot \cdots\cdot v_n} \\
& \quad - (-1)^{|Y_1|+\cdots+|Y_n|} \sum_k P_\cU \big( (Y_1, v_1 \circ \pi)
\circc \cdots \circc (Y_n, v_n \circ \pi) \circc \xi^k\cdot
(\liederivative{e_k}, i\circ e_k \circ \pi) \big) \\
& \quad + (-1)^{|Y_1|+\cdots+|Y_n|} \sum_k
\class{v_1 \cdot \cdots\cdot v_n\cdot 0}
.\end{split} \]

Using the identity $(Y,v\circ\pi)\cdot f=Y(f)+f\cdot(Y,v\circ\pi)$ repeatedly,
each product of the form
\[ (Y_1, v_1 \circ \pi)
\circc \cdots \circc (Y_n, v_n \circ \pi) \circc \xi^k\cdot
(\liederivative{e_k}, i\circ e_k \circ \pi) \]
can be rewritten as a linear combination of the form
\[ \sum_{j=1}^n f_j\cdot (Y_1,v_1\circ\pi)\cdot\cdots\cdot(Y_j,v_j\circ\pi)
\cdot(\liederivative{e_k}, i\circ e_k \circ \pi) \]
with coefficients $f_j$ in $\Gamma(\Lambda^\bullet A^\vee)$.
It follows that
\[ P_\cU \big((Y_1, v_1 \circ \pi)
\circc \cdots \circc (Y_n, v_n \circ \pi) \circc \xi^k\cdot
(\liederivative{e_k}, i\circ e_k \circ \pi)\big)
=\sum_{j=1}^n f_j\cdot\class{v_1\cdot\cdots\cdot v_j\cdot i(e_k)}=0, \]
since $e_k\in\Gamma(A)$.

Hence, we obtain
\begin{multline*}
P_\cU \circ D_\cU \big( (Y_1, v_1 \circ \pi) \circ
\cdots \circ (Y_n, v_n \circ \pi) \big)
= \sum_k \xi^k\cdot \class{i(e_k)\cdot v_1 \cdot \cdots\cdot v_n} \\
= \sum_k \xi^k\cdot e_k \triangleright \overline{v_1 \circc \cdots \circc v_n}
= \dA \big( \overline{v_1 \circc \cdots \circc v_n}\big)
= \dA\circ P_\cU \big( (Y_1, v_1 \circ \pi) \circc \cdots \circc
(Y_n, v_n \circ \pi) \big)
.\qedhere\end{multline*}
This concludes the proof of the lemma.
\end{proof}

\subsection{Contraction of universal enveloping algebras}
We are finally ready to present our strategy to prove Theorem~\ref{theor:main}.
The idea is to complete the cochain map $P_\cU$ to a whole set of contraction 
data
$(P_\cU, I_\cU, H_\cU)$:

\begin{theorem}
\label{thm:Genova}
A set of contraction data
\begin{equation}
\label{eq:contraction_S}
\begin{tikzcd}
\Big(\cU(\pi^! L),D_\cU\Big)
\arrow[r, "P_\cU", shift left] \arrow["H_\cU", loop left] &
\Big(\Gamma(\Lambda^\bullet A^\vee)\otimes_R\cU_{L/A},\dA\Big)
\arrow[l, "I_\cU", shift left]
\end{tikzcd}
\end{equation}
can be constructed from a splitting of~\eqref{eq:splitting1}
and appropriate $L$-connections $\nabla^A$, $\nabla^B$, and $\nabla^L$ on $A$, $B$, and $L$, and a linear connection on $A$.

Moreover, the projection map is precisely the cochain map $P_\cU$ from Lemma~\ref{lem:P_U}.
\end{theorem}

Then using homotopy transfer,
we produce the desired $A_\infty$-algebra structure
on $\Gamma (\Lambda^\bullet A^\vee )\otimes_R \cU_{L/A}$
from the dg associative algebra structure on
$\cU(\pi^! L)$. 

To prove Theorem~\ref{thm:Genova}, We proceed as follows.

First, using an $L$-connection $\nabla^B$ on $B$ extending the Bott
$A$-connection
and a $\pi^! L$-connection $\nabla$ on $\pi^! L$,
we construct a pair of isomorphisms of Poincaré-Birkhoff-Witt type:
\[ \PBW : \Gamma(S\pi^! L) \xto{\simeq} \cU(\pi^! L)
\qquad\text{and}\qquad
\pbw : \Gamma(S B) \xto{\simeq} \cU_{L/A} .\]
Second, we push-forward the contraction diagram \eqref{eq:contraction:S}
via $\PBW$ and $\id\otimes\pbw$
so as to obtain the isomorphic contraction diagram
\begin{equation}
\label{eq:Roma}
\begin{tikzcd}
\Big(\cU(\pi^! L),D^{\pbw}_S\Big)
\arrow[r, "P^{\pbw}_S", shift left] \arrow["H^{\pbw}_S", loop left] &
\Big(\Gamma(\Lambda^\bullet A^\vee)\otimes_R\cU_{L/A},\dA^{\pbw}\Big)
\arrow[l, "I^{\pbw}_S", shift left]
\end{tikzcd}
\end{equation}
with
\[ \begin{gathered}
D^{\pbw}_S=\PBW\circ D_S\circ\PBW^{-1} \\
\dA^{\pbw}=(\id\otimes\pbw)\otimes
\dA\circ(\id\otimes\pbw)^{-1}
\end{gathered}
\qquad\text{and}\qquad
\begin{gathered}
P^{\pbw}_S=(\id\otimes\pbw)\circ P_S\circ\PBW^{-1} \\
I^{\pbw}_S=\PBW\circ I_S\circ(\id\otimes\pbw)^{-1} \\
H^{\pbw}_S=\PBW\circ H_S\circ\PBW^{-1}
\end{gathered} .\]

Third, we interpret $D_\cU$ as a perturbation of $D^{\pbw}_S$
and use homological perturbation to produce a new contraction diagram
\begin{equation}\label{eq:contr_hp}
\begin{tikzcd}
\Big(\cU(\pi^! L),D_\cU\Big)
\arrow[r, "P^{\hp}_S", shift left] \arrow["H^{\hp}_S", loop left] &
\Big(\Gamma(\Lambda^\bullet A^\vee)\otimes_R\cU_{L/A},\dA^{\hp}\Big)
\arrow[l, "I^{\hp}_S", shift left]
\end{tikzcd}
.\end{equation}
Fourth and last step, we show that the connections $\nabla^B$ and $\nabla$
can be chosen in such a way that $P^{\hp}_S=P_\cU$ and, consequently,
$\dA^{\hp}=\dA$.

This rest of the section is devoted
to the proof of Theorem~\ref{thm:Genova},
by carrying out, in detail, the outline described above.

\subsubsection{Poincaré--Birkhoff--Witt type isomorphisms}

The next two propositions provide us with the pair of isomorphisms
of Poincaré--Birkhoff--Witt type $\PBW$ and $\pbw$.

\begin{proposition}[{\cite[Lemma~4.2]{MR3319134} and
\cite[Proposition~4.2]{MR3910470}}]
Given a $\pi^! L$-connection $\nabla$ on $\pi^! L$,
there exists a unique isomorphism of filtered $\Gamma(\Lambda^\bullet A^\vee)$-modules
\[ \PBW : \Gamma(S\pi^! L) \to \cU(\pi^! L) \]
satisfying
\[ \PBW(f)=f \qquad\text{and}\qquad \PBW(s)=s ,\]
for all $f\in C^\infty(A[1])\cong\Gamma(\Lambda^\bullet A^\vee)$
and $s\in\Gamma(\pi^! L)$, and, moreover,
\begin{equation}\label{eq:PBW}
\PBW (s_1 \odot \cdots \odot s_{n}) = \frac{1}{n} \sum_{i = 1}^n
(-1)^{|s_i| (|s_1|+ \cdots + |s_{i-1}|)} \left(s_i\cdot \PBW(s^{\{i\}})
-\PBW(\nabla_{s_i}s^{\{i\}}) \right),
\end{equation}
for all $s_1,\cdots,s_n\in\Gamma(\pi^! L)$,
where $s^{\{i\}} := s_1 \odot\cdots\odot \widehat{s_i} \odot\cdots\odot s_n$.
\end{proposition}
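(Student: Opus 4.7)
The plan is to construct $\PBW$ by induction on the symmetric tensor degree $n$ and then deduce the isomorphism property via a filtered argument.

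\textbf{Uniqueness.} The recursion \eqref{eq:PBW} expresses the value of $\PBW$ on $S^n\pi^!L$ in terms of its values on symmetric tensors of strictly smaller degree, so together with the prescribed base values on $C^\infty(A[1])$ and on $\Gamma(\pi^!L)$ it determines $\PBW$ uniquely by induction on $n$.

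\textbf{Existence.} Define $\PBW$ recursively starting from the base values and using \eqref{eq:PBW} for $n\geq 2$. The crucial point is well-definedness: the right-hand side of \eqref{eq:PBW} depends \emph{a priori} on an ordering of $s_1,\ldots,s_n$, whereas the left-hand side only depends on their symmetric product. It suffices to show invariance (with appropriate Koszul signs) under the interchange of two adjacent arguments $s_k$ and $s_{k+1}$. Tracking the two kinds of terms that appear when one swaps them, the antisymmetric part of the products $s_k\circc s_{k+1}$ in $\cU(\pi^!L)$ contributes a commutator $[s_k,s_{k+1}]$, which is precisely cancelled by the contribution coming from $\nabla_{s_k} s_{k+1}-(-1)^{|s_k||s_{k+1}|}\nabla_{s_{k+1}} s_k$ extracted from the $\PBW(\nabla_{s_i}s^{\{i\}})$ terms, since the connection $\nabla$ is a $\pi^!L$-connection on $\pi^!L$ itself. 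Combined with the induction hypothesis this establishes the required symmetry.

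\textbf{Filtration and isomorphism.} Both $\Gamma(S\pi^!L)$ and $\cU(\pi^!L)$ are filtered $\Gamma(\Lambda^\bullet A^\vee)$-modules, the former by symmetric degree and the latter by the standard PBW filtration. Inspection of \eqref{eq:PBW} shows that $\PBW$ respects these filtrations and that, modulo terms of strictly lower order in $\cU(\pi^!L)$,
\[ \PBW(s_1\odot\cdots\odot s_n)\equiv s_1\circc\cdots\circc s_n , \]
so the associated graded map is the canonical symmetrization. The latter is an isomorphism by the classical graded Poincar\'e--Birkhoff--Witt theorem for Lie--Rinehart algebras, hence $\PBW$ itself is an isomorphism of filtered $\Gamma(\Lambda^\bullet A^\vee)$-modules.

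I expect the main technical obstacle to be the well-definedness step: keeping careful track of Koszul signs in the graded setting and verifying the cancellation between the commutator terms from the associative product and the torsion-like terms from the connection requires a somewhat delicate combinatorial induction. This is essentially the dg variant of the Poincar\'e--Birkhoff--Witt isomorphism developed in \cite{MR3319134,MR3910470}, to which the present proposition can ultimately be reduced.
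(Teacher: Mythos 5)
First, a point of comparison: the paper does not actually prove this proposition --- it is imported verbatim from the cited references \cite{MR3319134,MR3910470} --- so there is no internal proof to measure you against, and I am judging your argument on its own terms. Your skeleton (define $\PBW$ by the recursion; check that the right-hand side depends only on the symmetric product; conclude that $\PBW$ is a filtered isomorphism because its associated graded is the classical graded Rinehart--PBW symmetrization) is the correct and standard route, and your uniqueness and filtration paragraphs are fine.

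The well-definedness step, however --- precisely the step you single out as the crux --- is argued incorrectly. You claim that under an adjacent transposition the commutator $[s_k,s_{k+1}]$ arising from comparing $s_k\circ s_{k+1}$ with $s_{k+1}\circ s_k$ must be cancelled by $\nabla_{s_k}s_{k+1}-(-1)^{|s_k||s_{k+1}|}\nabla_{s_{k+1}}s_k$. That cancellation is exactly the statement that $\nabla$ is torsion-free, which is \emph{not} assumed: the proposition holds for an arbitrary $\pi^! L$-connection on $\pi^! L$, so if the construction genuinely needed this identity the statement would be false. In fact no such cancellation is required, because the right-hand side of Equation~\eqref{eq:PBW} is already graded-symmetric by construction: it is the Koszul-signed average over which factor is pulled out, and each $s^{\{i\}}$ is a symmetric product by the induction hypothesis. (Check $n=2$ explicitly: both orderings yield $\tfrac12\left(s_1\circ s_2-\nabla_{s_1}s_2\right)+\tfrac12(-1)^{|s_1||s_2|}\left(s_2\circ s_1-\nabla_{s_2}s_1\right)$ with no condition on $\nabla$.) What genuinely has to be verified --- and what your sketch omits --- is that the right-hand side is balanced over $C^\infty(A[1])$, so that it descends from tuples of sections to $\Gamma(S^n\pi^! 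L)\cong S^n_{C^\infty(A[1])}\Gamma(\pi^! L)$ and makes $\PBW$ a morphism of $\Gamma(\Lambda^\bullet A^\vee)$-modules. Concretely, replacing $s^{\{i\}}$ by $f\cdot s^{\{i\}}$, the term $s_i\circ\PBW(f\cdot s^{\{i\}})$ produces $\rho(s_i)(f)\cdot\PBW(s^{\{i\}})$ in addition to $\pm f\cdot s_i\circ\PBW(s^{\{i\}})$, and this derivative term is cancelled by the Leibniz term of $\nabla_{s_i}(f\cdot s^{\{i\}})$; this is the entire reason the formula pairs $s_i\circ(\argument)$ with $-\nabla_{s_i}(\argument)$. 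With that substitution your argument goes through and matches the proofs in the cited references.
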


\begin{proposition}\cite[Theorem~2.1]{MR4271478}
Given an $L$-connection $\nabla^B$ on $B$ extending the Bott $A$-connection,
there exists a unique induced isomorphism of filtered $\cirm$-modules
\[ \pbw : \Gamma(S B) \to \cU_{L/A} \]
satisfying
\[ \pbw(f) = f \qquad\text{and}\qquad \pbw(\betaa) = \betaa ,\]
for all $f\in\cirm$ and $\betaa\in\Gamma(B)$, and, moreover,
\[ \pbw(\betaa_1\odot\cdots\odot\betaa_n)
= \frac{1}{n} \sum_{i=1}^n \left( j(\betaa_i) \circc \pbw(\betaa^{\{i\}})
- \pbw(\nabla^B_{j(\betaa_i)}\betaa^{\{i\}}) \right) ,\]
for all $\betaa_1,\dots,\betaa_n\in\Gamma(B)$, where, in the r.h.s,
we used the obvious left $\cU(L)$-module structure on $\cU_{L/A}$
(also denoted by $\circc$), and
$\betaa^{\{i\}}$ is an abbreviation for
$\betaa_1 \odot\cdots\odot \widehat{\betaa_i} \odot\cdots\odot \betaa_n$.
\end{proposition}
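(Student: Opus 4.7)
The plan is to reduce the construction of $\pbw$ to the PBW theorem for the full Lie algebroid $L$ (the immediately preceding proposition) by first extending $\nabla^B$ to an $L$-connection on $L$ and then pushing forward along the canonical projection $\cU(L)\twoheadrightarrow\cU_{L/A}$. Concretely, I would fix an auxiliary $L$-connection $\nabla^A$ on $A$ and define an $L$-connection $\nabla^L$ on $L$ using the splitting $L\cong i(A)\oplus j(B)$ by
\[ \nabla^L_l\bigl(i(a)+j(b)\bigr) = i\bigl(\nabla^A_l a\bigr)+j\bigl(\nabla^B_l b\bigr), \]
for $l\in\Gamma(L)$, $a\in\Gamma(A)$, $b\in\Gamma(B)$. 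By construction, $\nabla^L$ preserves the sub-bundle $j(B)\subset L$, and this is the key structural feature that makes the reduction work.

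With $\nabla^L$ in hand, the preceding proposition produces a filtered $\cirm$-module isomorphism $\PBW^L:\Gamma(SL)\to\cU(L)$ satisfying the analogous recursion. I would then take $\pbw$ to be the composition
\[ \Gamma(SB)\xto{S(j)}\Gamma(SL)\xto{\PBW^L}\cU(L)\twoheadrightarrow\cU_{L/A}, \]
and verify the listed properties. The equalities $\pbw(f)=f$ and $\pbw(\betaa)=\betaa$ are immediate from the corresponding equalities for $\PBW^L$, combined with the identification of $\betaa\in\Gamma(B)$ with the class of $j(\betaa)$ in $\cU_{L/A}$. The recursive formula for $\pbw$ then follows by applying the recursion for $\PBW^L$ to the $n$-tuple $j(\betaa_1),\ldots,j(\betaa_n)$ and observing that each covariant-derivative term $\nabla^L_{j(\betaa_i)}$ acts on $j^{\odot(n-1)}(\betaa^{\{i\}})$ to produce again a symmetric product of sections of $j(B)$, namely $j^{\odot(n-1)}(\nabla^B_{j(\betaa_i)}\betaa^{\{i\}})$; projecting to $\cU_{L/A}$ then yields precisely the formula in the statement.

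Uniqueness is automatic from the recursion, which determines $\pbw(\betaa_1\odot\cdots\odot\betaa_n)$ inductively from its values on $S^{<n}B$ and from the initial conditions on $S^{\leq 1}B$; in particular, the resulting map is independent of the auxiliary choice of $\nabla^A$. To see that $\pbw$ is an isomorphism of filtered $\cirm$-modules, I would pass to the associated graded: the canonical filtration of $\cU(L)$ descends to $\cU_{L/A}$, and the essential structural fact is the canonical identification $\operatorname{gr}\cU_{L/A}\cong\Gamma(SB)$; combined with $\operatorname{gr}\PBW^L=\id_{\Gamma(SL)}$, this forces $\operatorname{gr}\pbw=\id_{\Gamma(SB)}$, so $\pbw$ itself is a filtered isomorphism. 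The main obstacle is precisely establishing this identification $\operatorname{gr}\cU_{L/A}\cong\Gamma(SB)$, which amounts to showing that the right ideal $\cU(L)\cdot\Gamma(A)$ intersects each filtration layer $\cU^{\leq n}(L)$ in a controlled way --- in effect, the PBW theorem for Lie pairs proper --- after which bijectivity of $\pbw$ is automatic by comparing graded pieces degree by degree.
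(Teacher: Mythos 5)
The paper does not prove this proposition itself --- it is imported verbatim from \cite{MR4271478} --- so there is no internal argument to compare yours against; I am therefore assessing your proposal on its own terms. The reduction of existence, uniqueness and the recursion to the PBW isomorphism for the full algebroid $L$ is sound: choosing $\nabla^L$ with $\nabla^L_l\big(j(b)\big)=j\big(\nabla^B_l b\big)$ makes the derivation term in the recursion for $\PBW^L$ close up on $S(j)\big(\Gamma(SB)\big)$, and since the projection $\cU(L)\to\cU_{L/A}$ is a morphism of left $\cU(L)$-modules the stated recursion for $\pbw$ follows. Defining the map as a composite rather than by the recursion also gives you well-definedness for free, and uniqueness is indeed forced by the recursion plus the initial conditions, since decomposables span $\Gamma(SB)$.

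The gap is in the final step. That $\pbw$ is a filtered \emph{isomorphism} is not ``automatic by comparing graded pieces'': it hinges on the identification $\operatorname{gr}\cU_{L/A}\cong\Gamma(SB)$, equivalently on the equality $\operatorname{gr}\big(\cU(L)\cdot\Gamma(A)\big)=\Gamma(SL)\cdot\Gamma(A)$ inside $\operatorname{gr}\cU(L)\cong\Gamma(SL)$. The inclusion $\supseteq$ is easy, and with it surjectivity of $\pbw$ follows by commuting the $\Gamma(A)$-factors to the right modulo lower order; but the inclusion $\subseteq$ --- that an element of the right ideal $\cU(L)\cdot\Gamma(A)$ lying in $\cU^{\leq n}(L)$ agrees, modulo $\cU^{\leq n-1}(L)$, with an element of $\cU^{\leq n-1}(L)\cdot\Gamma(A)$ --- is exactly the Poincar\'e--Birkhoff--Witt theorem for the Lie pair, i.e.\ the injectivity of $\pbw$ and the substantive content of the cited theorem. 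You correctly name this as ``the main obstacle'' but then set it aside, so the heart of the statement remains unproved. To close the gap one must either establish $\operatorname{gr}\cU_{L/A}\cong\Gamma(SB)$ directly (e.g.\ by a local-frame argument adapted to the splitting $L\cong i(A)\oplus j(B)$, combined with Rinehart's theorem for $\cU(L)$), or, as in \cite{MR4271478}, construct the inverse of $\pbw$ explicitly by a parallel recursion and check by induction on the filtration degree that the two maps are mutually inverse.
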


The second step described above is straightforward.
The third step requires some explanations.
In order for the classical homological perturbation lemma to be applicable,
we need $H_{S}^{\pbw}\circ(D_\cU-D_{S}^{\pbw})$
to be a locally nilpotent operator,
i.e.\ for every $\Upsilon\in\cU(\pi^! L)$ there must exist $k\in\NN$
such that $\big(H_{S}^{\pbw}\circ(D_\cU-D_{S}^{\pbw})\big)^k(\Upsilon)=0$.
This is indeed the case.
To show it, notice that the operator
$D_\cU-D_{S}^{\pbw}$ lowers by $1$ the order
with respect to the standard filtration of $\cU(\pi^! L)$
and that the operator $H_{S}^{\pbw}$ respects that same filtration.
This follows from the definitions of the operators $D_\cU$, $D_S$, and $H_S$
and the simple fact that
\[ \PBW(s_1 \odot \cdots \odot s_n) = s_1 \cdot \cdots \cdot s_n
+ \text{lower order terms} \]
for all $s_1,\cdots,s_n\in\Gamma(\pi^! L)$.
Since the filtration in $\cU(\pi^! L)$ is bounded from below
(the first non-trivial order in the filtration is the $0$-th order),
the operator $H^{\pbw}_S \circ(D_\cU-D_{S}^{\pbw})$
is indeed locally nilpotent and the homological perturbation lemma applies.

It remains to construct connections $\nabla^B$ and $\nabla$
such that $P_{S}^{\hp}=P_\cU$.

\subsubsection{A key proposition}

We need a key proposition.

\begin{proposition}\label{pro:nabla}
There exists an $L$-connection $\nabla^B$ on $B$ extending the Bott $A$-connection
and a $\pi^! L$-connection $\nabla$ on $\pi^! L$ such that
\begin{equation}\label{eq:P_PBW_I}
P_\cU \circ I_S^{\pbw} = \id
\end{equation}
and
\begin{equation}\label{eq:P_PBW_H}
P_\cU \circ H_S^{\pbw} = 0
.\end{equation}
\end{proposition}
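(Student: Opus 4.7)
The plan is to construct $\nabla^B$ and $\nabla$ by prescribing their action on a convenient local frame, and then to verify \eqref{eq:P_PBW_I} and \eqref{eq:P_PBW_H} by induction on the symmetric degree using the defining PBW recursion \eqref{eq:PBW}. First, I would pick any $L$-connection $\nabla^B$ on $B$ extending the Bott $A$-connection (such extensions always exist, for instance by choosing any $B$-connection on $B$ and transferring it through the splitting $j$). Next, recall from Remark~\ref{flageolet} that the sections $(\liederivative{e_k}, i\circ e_k\circ\pi)$, $(\interior{e_k}, 0)$, and $(\Delta_{b_l}, j(b_l)\circ\pi)$ --- associated with local frames $\{e_k\}$ of $A$ and $\{b_l\}$ of $B$ --- form a local $C^\infty(A[1])$-frame of $\pi^! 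L$; on this frame $\nabla$ may be prescribed freely. I would impose the three requirements:
\begin{itemize}
\item[(i)] $\nabla_{I_0(b)}I_0(b')=I_0(\nabla^B_{j(b)}b')$ for all $b,b'\in\Gamma(B)$;
\item[(ii)] $\nabla_s(Y,0)$ is a null section for every $s\in\Gamma(\pi^! L)$ and every null section $(Y,0)$;
\item[(iii)] $\nabla_{(Y,0)}s=0$ for every $s\in\Gamma(\pi^! L)$ and every null section $(Y,0)$.
\end{itemize}
These requirements constrain $\nabla$ on disjoint parts of the frame, so they are mutually consistent.

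To establish \eqref{eq:P_PBW_I}, by $\Gamma(\Lambda^\bullet A^\vee)$-linearity of all the maps involved, it will suffice to check the identity on elements $1\otimes b_1\odot\cdots\odot b_n$, for which $I_S$ produces the symmetric product $I_0(b_1)\odot\cdots\odot I_0(b_n)$. Setting $s_i:=I_0(b_i)$ and expanding via \eqref{eq:PBW}, condition~(i) ensures that each $\nabla_{s_i}s^{\{i\}}$ remains a symmetric product of sections in $I_0(\Gamma(B))$, so the recursion stays within this subspace. Combining this with Lemma~\ref{lem:P_U} --- which identifies $P_\cU$ of an associative monomial of special-form sections with the corresponding class in $\cU_{L/A}$ --- an induction on $n$ reduces the PBW recursion on the source exactly to the defining recursion of $\pbw$, yielding $P_\cU\circ\PBW\circ I_S(1\otimes b_1\odot\cdots\odot b_n)=1\otimes\pbw(b_1\odot\cdots\odot b_n)$.

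To establish \eqref{eq:P_PBW_H}, observe that the tensor trick formula \eqref{eq:sym} presents every element of $\im(H_S)$ as a sum of symmetric products, each containing at least one null factor produced by $H_0$. Iterating \eqref{eq:PBW} and applying (ii) and (iii), every associative monomial arising in $\PBW\circ H_S$ still contains at least one null factor: condition~(ii) ensures null factors remain null under $\nabla$-derivatives, while condition~(iii) ensures that $\nabla$-corrections along null directions contribute nothing. By Lemma~\ref{lem:P_U}, $P_\cU$ then vanishes on any such monomial, since a zero factor annihilates the product in $\cU(L)$ modulo the right ideal generated by $\Gamma(A)$. The hard part will be this last verification: carefully bookkeeping how the null factor persists through iterated applications of the PBW recursion, given that the $\nabla$-corrections can reshuffle terms in nontrivial ways before one can invoke Lemma~\ref{lem:P_U}.
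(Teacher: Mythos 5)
Your overall strategy coincides with the paper's: build connections adapted to the contraction data and then run an induction through the PBW recursion \eqref{eq:PBW}, using the explicit formula \eqref{eq:P_U} for $P_\cU$ on products of $\pi$-related sections. Your treatment of \eqref{eq:P_PBW_I} via condition~(i) is essentially the paper's Corollary~\ref{lem:3.20} followed by the same induction, and is fine. However, there are two genuine problems with the rest.

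First, condition~(iii) as stated is inconsistent with the Leibniz rule: if $(Y,0)$ is a null section with $Y\neq 0$, then $\nabla_{(Y,0)}(f\cdot s)=Y(f)\cdot s+f\cdot\nabla_{(Y,0)}s$, so $\nabla_{(Y,0)}$ cannot annihilate \emph{every} section of $\pi^! L$. What is actually true for the paper's connection (Corollary~\ref{Omacron}) and what the argument needs is weaker: $\nabla_s s'$ is \emph{null} whenever either $s$ or $s'$ is null. Second, and more seriously, your conditions (i)--(iii) do not suffice for \eqref{eq:P_PBW_H}. The products produced by $H_S$ via \eqref{eq:sym} contain, besides one null factor and some factors in $I_0(\Gamma(B))$, the untouched factors $s_{i+1},\dots,s_n$, which are $\pi$-related to \emph{arbitrary} sections of $L$. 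For the induction to close you must know that $\nabla_t t'$ is again $\pi$-related to a section of $L$ whenever $t$ and $t'$ are --- otherwise the PBW recursion immediately leaves the subspace of products of $\pi$-related sections, formula \eqref{eq:P_U} no longer computes $P_\cU$, and the inductive hypothesis (the analogue of Lemma~\ref{lem:3.21} in lower symmetric degree) cannot be invoked. Your conditions say nothing about $\nabla_{(\liederivative{e_k},\,i\circ e_k\circ\pi)}(\Delta_{b},\,j\circ b\circ\pi)$ and the other mixed frame pairs, and ``prescribing freely'' there (e.g.\ with coefficients of positive degree on non-null frame elements) destroys this closure property. This closure is exactly what the paper engineers: it first produces mutually compatible $L$-connections $\nabla^A$, $\nabla^B$, $\nabla^L$ (Lemma~\ref{lem:FRA}, with $\nabla^A_{j(b)}=\Delta_b$ and $\nabla^L\circ i=i\circ\nabla^A$, $\nabla^L\circ j=j\circ\nabla^B$), then uses $\nabla^A$ to trivialize $T_{A[1]}$ and identify $\pi^! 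L\cong\pi^*(A[1]\oplus L)$, and finally takes $\nabla$ to be the pullback of $\nabla^A\oplus\nabla^L$; Lemma~\ref{eq:BOS} and Corollary~\ref{Omacron} then give precisely that $\nabla_t t'$ is $\pi$-related to $\nabla^L_v v'$. You would need to add this preservation property as a further requirement on $\nabla$ and, crucially, exhibit a connection satisfying all the requirements simultaneously --- the ``mutual consistency'' of your list is not automatic once this condition is included, which is why the paper proceeds by an explicit geometric construction rather than by frame prescription.
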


First, we need a few lemmas.

\begin{lemma}\label{lem:FRA}
There exists an $L$-connection $\nabla^A$ on $A$,
an $L$-connection $\nabla^B$ on $B$,
and an $L$-connection $\nabla^L$ on $L$ satisfying
the relations
\[ \nabla^L_X \big(i(a)\big) = i\big(\nabla^A_X a\big)
\qquad\text{and}\qquad
\nabla^L_X \big(j(b)\big) = j\big(\nabla^B_X b\big) \]
as well as
\[ \nabla^A_{j(b)} a = \Delta_b a
\qquad\text{and}\qquad
\nabla^B_{i(a)} b = \nabla^{\Bott}_a b \]
for all $X\in\Gamma(L)$, $a\in\Gamma(A)$, and $b\in\Gamma(B)$.
\end{lemma}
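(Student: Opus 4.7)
The idea is to exploit the splitting $j:B\to L$ of~\eqref{eq:splitting1} to reduce the lemma to a pair of free choices together with a routine verification. The splitting yields a direct sum decomposition $L\cong i(A)\oplus j(B)$ with projections $p:L\to A$ and $q:L\to B$ satisfying $p\circ i=\id_A$, $q\circ j=\id_B$, $p\circ j=0$, and $q\circ i=0$, so that every $X\in\Gamma(L)$ decomposes uniquely as $X=i(p(X))+j(q(X))$. The key observation is that the prescribed values $\nabla^A_{j(b)}a=\Delta_b a$ and $\nabla^B_{i(a)}b=\nabla^{\Bott}_a b$ only determine $\nabla^A$ and $\nabla^B$ along one ``half'' of $L$ each, leaving the complementary half free; and both $\Delta_b$ and $\nabla^{\Bott}_a$ already satisfy the Leibniz rules required of honest partial $L$-connections (with anchor images $\rho_L\circ j(b)$ and $\rho_L\circ i(a)=\rho_A(a)$ respectively).

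Concretely, I would first pick, via a standard partition-of-unity argument, any $A$-connection $\nabla^{A,A}$ on $A$ and any $B$-connection $\nabla^{B,B}$ on $B$, and then set
\[ \nabla^A_X a := \nabla^{A,A}_{p(X)}a + \Delta_{q(X)}a, \qquad \nabla^B_X b := \nabla^{\Bott}_{p(X)}b + \nabla^{B,B}_{q(X)}b \]
for all $X\in\Gamma(L)$, $a\in\Gamma(A)$, $b\in\Gamma(B)$. The $L$-connection $\nabla^L$ on $L$ is then defined component-wise using the isomorphism $L\cong i(A)\oplus j(B)$ induced by the splitting:
\[ \nabla^L_X\bigl(i(a)+j(b)\bigr) := i(\nabla^A_X a) + j(\nabla^B_X b). \]

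It remains to verify the connection axioms and the identities appearing in the statement. The two intertwining relations $\nabla^L_X i(a)=i(\nabla^A_X a)$ and $\nabla^L_X j(b)=j(\nabla^B_X b)$ hold tautologically by construction, and the prescribed specializations $\nabla^A_{j(b)}a=\Delta_b a$ and $\nabla^B_{i(a)}b=\nabla^{\Bott}_a b$ follow immediately from $p\circ j=0$, $q\circ j=\id_B$, $p\circ i=\id_A$, and $q\circ i=0$. The only nontrivial point is the Leibniz rule for each of $\nabla^A$, $\nabla^B$, $\nabla^L$; this reduces to the anchor decomposition $\rho_L(X)=\rho_A(p(X))+\rho_L(j(q(X)))$ (valid because $\rho_L\circ i=\rho_A$) combined with the individual Leibniz properties of $\nabla^{A,A}$, $\nabla^{B,B}$, $\Delta$, and $\nabla^{\Bott}$. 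I do not anticipate a real obstacle: every step is either a formal consequence of the splitting or routine bookkeeping, and the shape of the construction is essentially forced by the partial specifications in the statement.
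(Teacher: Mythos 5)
Your construction is correct and is essentially the paper's own proof: the paper also decomposes the direction $X$ as $ip(X)+jq(X)$, uses the forced partial connections ($\Delta_{q(X)}=p[jq(X),i(\argument)]$ and $\nabla^{\Bott}_{p(X)}=q[ip(X),j(\argument)]$) in the cross directions, fills the remaining directions with auxiliary data, and defines $\nabla^L$ blockwise so that the two intertwining relations hold tautologically. The only cosmetic difference is that the paper packages your two free choices $\nabla^{A,A}$ and $\nabla^{B,B}$ as $p(\nabla'_{i(\argument)}i(\argument))$ and $q(\nabla'_{j(\argument)}j(\argument))$ for a single auxiliary $L$-connection $\nabla'$ on $L$, whereas you choose them independently.
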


\begin{proof}
Choose an (auxiliary) $L$-connection $\nabla'$ on $L$.
Then define an $L$-connection $\nabla^L$ on $L$ by the relation
\[ \nabla^L_X Y=ip\big(\nabla'_{ip(X)}ip(Y)+\lie{jq(X)}{ip(Y)}\big)
+jq\big(\nabla'_{jq(X)}jq(Y)+\lie{ip(X)}{jq(Y)}\big) ,\]
an $L$-connection $\nabla^A$ on $A$ by the relation
\[ \nabla^A_X a=p\big(\nabla'_{ip(X)}i(a)+\lie{jq(X)}{i(a)}\big) ,\]
and an $L$-connection $\nabla^B$ on $B$ by the relation
\[ \nabla^B_X b=q\big(\nabla'_{jq(X)}j(b)+\lie{ip(X)}{j(b)}\big) .\]
It is straightforward to check that $\nabla^L$, $\nabla^A$, and $\nabla^B$
enjoy the desired properties.
\end{proof}

Now we proceed to construct the $\pi^! L$-connection
$\nabla$ on $\pi^! L$.

Choose a linear connection, i.e. a 
$T_M$-connection on the vector bundle $A\to M$.
It induces a linear connection on the vector bundle $A[1]\xto{\pi}M$,
which, in turn, induces a trivialization \cite{MR2581370}
of the tangent bundle $T_{A[1]}$:
\[ T_{A[1]}\xto{\simeq}A[1]\times_M (A[1]\oplus T_M)\cong\pi^*(A[1]\oplus T_M) .\]
Consequently, we obtain the following isomorphism of vector bundles over $A[1]$:
\[ \pi^! L=T_{A[1]} \times_{T_M} L\xrightarrow[\simeq]{\Phi}
A[1]\times_M(A[1]\oplus L) \cong \pi^* (A[1]\oplus L) ,\]
which maps $(\nabla^A_v,v\circ\pi)\in\Gamma(\pi^! L)$
to $\pi^*(v)\in\Gamma\big(\pi^*(A[1]\oplus L)\big)$
and maps $(\iota_a,0)\in\Gamma(\pi^! L)$
to $\pi^*(a[1])\in\Gamma\big(\pi^*(A[1]\oplus L)\big)$.

The $L$-connections $\nabla^A$ and $\nabla^L$ on $A$ and $L$
induce an $L$-connection on the graded vector bundle $A[1]\oplus L\to M$.
Pulling back the Lie algebroid $L$, the graded vector bundle $A[1]\oplus L$,
and this $L$-connection on $A[1]\oplus L\to M$
through the surjective submersion $\pi: A[1]\to M$,
we obtain a $\pi^! L$-connection $\nabla$ on the vector bundle
$\pi^*(A[1]\oplus L)\cong \pi^! L$.

\begin{lemma}\label{eq:BOS}
For all $v,v'\in\Gamma(L)$ and $a,a'\in\Gamma(A)$, we have
\[ \begin{gathered}
\nabla_{(\nabla^A_v,v\circ\pi)}(\nabla^A_{v'},v'\circ\pi)
= \big(\nabla^A_{\nabla^L_v v'},(\nabla^L_v v')\circ\pi\big), \\
\nabla_{(\interior{a},0)}(\nabla^A_{v'},v'\circ\pi) = (0,0), \\
\nabla_{(\nabla^A_v,v\circ\pi)}(\interior{a},0)
= (\interior{\nabla^A_v a},0), \\
\nabla_{(\interior{a},0)}(\interior{a'},0) = (0,0).
\end{gathered} \]
\end{lemma}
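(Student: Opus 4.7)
The plan is to reduce all four identities to the definition of $\nabla$ as the pullback of the $L$-connection $\bar\nabla := \nabla^A\oplus\nabla^L$ on $A[1]\oplus L \to M$ along the surjective submersion $\pi : A[1]\to M$, transported back to $\pi^!L$ via the trivialization $\Phi : \pi^!L \xrightarrow{\simeq}\pi^*(A[1]\oplus L)$ recorded just before the lemma.

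First I would spell out explicitly how this pullback connection $\widetilde\nabla := \pi^!\bar\nabla$ on $\pi^*(A[1]\oplus L)$ acts on pullback sections. Since $\pi^!L \cong T_{A[1]}\times_{T_M}L$ projects onto $L$ (as a vector bundle, covering $\pi$) via $\mathrm{pr}_2 : (X,\nu)\mapsto\nu$, and since $\bar\nabla$ is $C^\infty(M)$-linear in its Lie-algebroid entry, one may set, for $\sigma = (X,\nu)\in\Gamma(\pi^!L)$ and $e\in\Gamma(A[1]\oplus L)$,
\[
\widetilde\nabla_\sigma(\pi^*e)\big|_p \;:=\; \bar\nabla_{\nu(p)}\,e \qquad (p\in A[1]).
\]
A short verification then shows this extends, by $C^\infty(A[1])$-linearity in $\sigma$ and the graded Leibniz rule in the second entry (against the anchor $\rho(\sigma)=X$), to a bona fide $\pi^!L$-connection on $\pi^*(A[1]\oplus L)$. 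Two consequences will do all the work: when $\nu = v\circ\pi$ for some $v\in\Gamma(L)$, the formula reads $\widetilde\nabla_\sigma(\pi^*e) = \pi^*(\bar\nabla_v e)$; and when $\sigma$ is null, the formula gives $0$.

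Then I would simply plug in, using the identifications supplied by $\Phi$: the horizontal lift $(\nabla^A_v, v\circ\pi)$ corresponds to $\pi^*v$ in the $L$-summand, and the null section $(\iota_a,0)$ corresponds to $\pi^*(a[1])$ in the $A[1]$-summand. Combined with the direct-sum structure $\bar\nabla_v v' = \nabla^L_v v'$ and $\bar\nabla_v(a[1]) = (\nabla^A_v a)[1]$, the two rules above yield the four claims at once: cases (1) and (3), where the first slot is a horizontal lift, produce the expected $\pi^*$ of a $\bar\nabla$-derivative (hence, applying $\Phi^{-1}$, the horizontal lift of $\nabla^L_v v'$ and the vertical vector $(\iota_{\nabla^A_v a},0)$, respectively); cases (2) and (4), where the first slot is a null section, produce zero.

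The only genuine obstacle is the consistency check in the first paragraph, i.e.\ verifying that the pullback formula above really defines the same $\pi^!L$-connection $\nabla$ as the geometric construction in the paragraph preceding the lemma (the one using the trivialization $T_{A[1]}\cong\pi^*(A[1]\oplus T_M)$ supplied by $\nabla^A$, followed by pullback of $\bar\nabla$ along $\pi$). This amounts to checking that $\Phi$ identifies horizontal sections of $\pi^!L$ with $\pi^*\Gamma(L)$ and null sections with $\pi^*\Gamma(A[1])$ in a way compatible with the summand-by-summand prescription of $\bar\nabla$; this is bookkeeping involving the defining properties of $\Phi$ but raises no essential difficulty.
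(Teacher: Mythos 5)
Your proposal is correct and follows essentially the same route as the paper: the paper's proof likewise reduces all four identities to the defining property of the pullback connection on pullback sections, namely $\nabla_{(X,v\circ\pi)}\pi^*s=\pi^*(\nabla^E_v s)$ for $\pi$-related sections (which covers the null case via $v=0$), and then invokes the identifications $\Phi\big((\nabla^A_v,v\circ\pi)\big)=\pi^*(v)$ and $\Phi\big((\interior{a},0)\big)=\pi^*(a[1])$. Your explicit pointwise formula for the pullback connection is just an unpacking of that same property, and the consistency check you flag is exactly the content the paper takes for granted in the phrase ``pullback $\pi^!L$-connection.''
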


\begin{proof}
Given a Lie algebroid $L\to M$ and
a vector bundle $E\to M$ over the same
base manifold $M$, an $L$-connection $\nabla^E$
on $E$, and a surjective submersion
$\pi:N\to M$, we can consider the pullback
Lie algebroid $\pi^! L\to N$, the pullback
vector bundle $\pi^* E\to N$ and
the pullback $\pi^! L$-connection $\nabla$
on $\pi^* E\to N$.
For any section $s$ of the vector bundle $E\to M$,
any section $v$ of the Lie algebroid $L\to M$,
and any $\pi$-related section $(X,v\circ\pi)$
of the pullback Lie algebroid $\pi^! L$, we have
\begin{equation}\label{eq:edmond}
\nabla_{(X,v\circ\pi)}\pi^* s=\pi^*(\nabla^E_v s)
.\end{equation}

In the special case at hand, we have
$N=A[1]$, $E=A[1]\oplus L$,
and $\nabla^E=\nabla^A+\nabla^L$.
Furthermore, the pullback Lie algebroid is
identified to the pullback vector bundle through
the map $\Phi:\pi^! L\to\pi^*(A[1]\oplus L)$.

The desired identities follow immediately from
Equation~\eqref{eq:edmond} and the identities
\begin{gather*}
\pi^*(v)=\Phi\big((\nabla^A_v,v\circ\pi)\big),
\quad\forall v\in\Gamma(L); \\ \intertext{and}
\pi^*(a[1])=\Phi\big((\interior{a},0)\big),
\quad\forall a\in\Gamma(A)
.\qedhere\end{gather*}
\end{proof}

\begin{corollary}\label{Omacron}
\strut
\begin{enumerate}
\item If $s\in\Gamma(\pi^! L)$ is $\pi$-related to $v\in\Gamma(L)$
and $s'\in\Gamma(\pi^! L)$ is $\pi$-related to $v'\in\Gamma(L)$,
then $\nabla_s s'\in\Gamma(\pi^! L)$ is $\pi$-related
to $\nabla^L_v v'\in\Gamma(L)$.
\item In particular, if either $s$ or $s'$ is null, then so is $\nabla_s s'$.
\end{enumerate}
\end{corollary}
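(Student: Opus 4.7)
The plan is to derive the corollary directly from Lemma~\ref{eq:BOS} by exploiting the trivialization $\Phi : \pi^! L \xto{\simeq} \pi^*(A[1] \oplus L)$ built just above that lemma. Under $\Phi$, the sections $(\nabla^A_v, v \circ \pi)$ for $v \in \Gamma(L)$ and $(\iota_a, 0)$ for $a \in \Gamma(A)$ correspond to $\pi^*(v)$ and $\pi^*(a[1])$ respectively, so they generate $\Gamma(\pi^! L)$ as a $C^\infty(A[1])$-module. Of these two families, the former consists of sections $\pi$-related to $v$, the latter of null sections; moreover every $\pi$-vertical vector field on $A[1]$ is a $C^\infty(A[1])$-linear combination of interior products $\iota_a$.

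For part~(1), I would write any $\pi$-related section $s = (X, v \circ \pi)$ in the canonical form
\[ s = (\nabla^A_v, v \circ \pi) + N_s ,\]
where $N_s$ is null: indeed, since $\pi_*(\nabla^A_v) = \rho_L(v) = \pi_*(X)$, the vector field $X - \nabla^A_v$ is $\pi$-vertical, and so $N_s = (X - \nabla^A_v, 0)$ is a $C^\infty(A[1])$-linear combination of sections of the form $(\iota_a, 0)$. Decomposing $s' = (\nabla^A_{v'}, v' \circ \pi) + N_{s'}$ in the same way, I would expand $\nabla_s s'$ using the $C^\infty(A[1])$-linearity of $\nabla$ in its first argument and the Leibniz rule in its second, then evaluate each resulting term via Lemma~\ref{eq:BOS}. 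Only the ``horizontal–horizontal'' contribution is nonzero modulo null sections, and it equals $(\nabla^A_{\nabla^L_v v'}, (\nabla^L_v v') \circ \pi)$, which is $\pi$-related to $\nabla^L_v v'$. All the remaining cross terms either vanish or are $C^\infty(A[1])$-multiples of sections $(\iota_{a''}, 0)$, hence null; the Leibniz corrections arising from pulling coefficients across $\nabla$ likewise only multiply null sections by functions on $A[1]$, and thus preserve nullity.

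Part~(2) is then immediate: if $s$ or $s'$ is null then $v$ or $v'$ vanishes, so $\nabla^L_v v' = 0$ and part~(1) says that $\nabla_s s'$ is $\pi$-related to the zero section of $L$, i.e.\ null. The only delicate point I anticipate is the bookkeeping in the Leibniz expansion, which has to confirm that every contribution other than the horizontal–horizontal one ends up as a $C^\infty(A[1])$-combination of null generators. This is mechanical rather than conceptual, since the anchor of a null section $(\iota_a, 0)$ is the $\pi$-vertical vector field $\iota_a$ and $C^\infty(A[1])$-multiples of null sections remain null.
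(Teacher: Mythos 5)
Your argument is correct and follows the same route as the paper: decompose each $\pi$-related section as $(\nabla^A_v,v\circ\pi)$ plus a $C^\infty(A[1])$-linear combination of null sections $(\interior{a},0)$, then reduce to the four identities of Lemma~\ref{eq:BOS} via linearity and the Leibniz rule. The paper merely states that the conclusion ``follows immediately'' from that lemma after exhibiting the decomposition, so your more explicit bookkeeping of the cross terms is just a fuller write-up of the same proof.
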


\begin{proof}
Since the section $s$ of $\pi^! L$ is
$\pi$-related to the section $v$ of $L$,
we have $s=(X,v\circ\pi)$ for some vector field
$X$ on $A[1]$ $\pi$-related to the vector field
$\rho(v)$ on $M$.
Since the pair $(\nabla^A_v,v\circ\pi)$
characterizes another section of $\pi^! L$
$\pi$-related to the section $v$ of $L$,
the vector field $X-\nabla^A_v\in\XX(A[1])$
is tangent to the fibers of $\pi:A[1]\to M$
and can therefore be written as a linear
combination \[ X-\nabla^A_v=
\sum_k\alpha^k\cdot\interior{a_k} \]
where the $a_k$'s are sections of $A$
and the coefficients $\alpha^k$ are
elements of the algebra $C^\infty(A[1])$.
It follows that
\[ s=(X,v\circ\pi)=(\nabla^A_v,v\circ\pi)
+\sum_k\alpha^k\cdot(\interior{a_k},0) .\]
Indeed, every section of $\pi^! L$ that is
$\pi$-related to a section $v$ of $L$ arises as
the sum of $(\nabla^A_{v},v\circ\pi)$ and
a linear combination of sections of type
$(\interior{a},0)$.
The conclusion now follows immediately from
Lemma~\ref{eq:BOS}.
\end{proof}

\begin{corollary}\label{lem:3.20}
Consider the injection $I_0:\Gamma(\Lambda^\bullet A^\vee\otimes B)
\to\Gamma(\pi^! L)$ defined in Section~\ref{defIo}.
For all $b,b'\in\Gamma(B)$, we have
\[ \nabla_{I_0(1\otimes b)}\big(I_0(1\otimes b')\big)
=I_0\big(1\otimes\nabla^B_{j(b)}b'\big) .\]
\end{corollary}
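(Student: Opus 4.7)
The plan is to chain Lemmas~\ref{lem:FRA} and~\ref{eq:BOS} together after rewriting $I_0(1\otimes b)$ in a form suitable for applying the pullback-connection formulas. Unpacking the definition of $I_0$ gives $I_0(1\otimes b)=(\Delta_b, j(b)\circ\pi)$, and likewise for $b'$. Lemma~\ref{eq:BOS} is phrased in terms of characterizing pairs of the form $(\nabla^A_v,v\circ\pi)$, so the first step is to identify the vector field $\Delta_b\in\XX(A[1])$ with $\nabla^A_{j(b)}$.

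To make this identification I would check equality on generators of $C^\infty(A[1])\cong\Gamma(\Lambda^\bullet A^\vee)$. Both derivations $\pi_*$-project onto the same vector field $\rho_{j(b)}\in\XX(M)$, so they agree on $\pi^*\cirm$. On $\Gamma(A^\vee)$ they act (by duality) through the endomorphisms $\Delta_b$ and $\nabla^A_{j(b)}$ of $\Gamma(A)$, which coincide by Lemma~\ref{lem:FRA}. Being derivations that match on a set of algebra generators, they are equal. Hence $I_0(1\otimes b)=(\nabla^A_{j(b)},j(b)\circ\pi)$, and similarly for $b'$.

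Now the first identity of Lemma~\ref{eq:BOS}, applied with $v=j(b)$ and $v'=j(b')$, directly computes
\[ \nabla_{I_0(1\otimes b)}\bigl(I_0(1\otimes b')\bigr)
= \bigl(\nabla^A_{\nabla^L_{j(b)} j(b')},\ (\nabla^L_{j(b)} j(b'))\circ\pi\bigr). \]
The compatibility $\nabla^L_{j(b)}j(b')=j(\nabla^B_{j(b)}b')$ from Lemma~\ref{lem:FRA} rewrites the right-hand side as $\bigl(\nabla^A_{j(\nabla^B_{j(b)}b')},\ j(\nabla^B_{j(b)}b')\circ\pi\bigr)$, which, reversing the identification established in the first step, is exactly $I_0(1\otimes\nabla^B_{j(b)}b')$. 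I do not anticipate any real obstacle: once the identity $\Delta_b=\nabla^A_{j(b)}$ is in hand, the statement follows from Lemma~\ref{eq:BOS} and Lemma~\ref{lem:FRA} with no additional work. The only point that deserves some care is the bookkeeping that the choices of $\nabla^A$, $\nabla^B$, and $\nabla^L$ provided by Lemma~\ref{lem:FRA} are exactly the ones used to build the pullback connection $\nabla$ on $\pi^!L$ via the trivialization $\Phi$, so that all the needed compatibilities hold simultaneously.
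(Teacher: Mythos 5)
Your proposal is correct and follows essentially the same route as the paper: identify $I_0(1\otimes b)=(\Delta_b,j(b)\circ\pi)$ with $\big(\nabla^A_{j(b)},j(b)\circ\pi\big)$ via Lemma~\ref{lem:FRA}, apply the first identity of Lemma~\ref{eq:BOS}, and conclude with the compatibility $\nabla^L_{j(b)}j(b')=j\big(\nabla^B_{j(b)}b'\big)$. Your generators-based verification that $\Delta_b=\nabla^A_{j(b)}$ as vector fields on $A[1]$ merely spells out what the paper attributes to Remark~\ref{flageolet} and Lemma~\ref{lem:FRA}.
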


\begin{proof}
According to Remark~\ref{flageolet} and Lemma~\ref{lem:FRA}, we have
\[ I_0(1\otimes b)=(\Delta_b,j\circ b\circ\pi)
=\big(\nabla^A_{j(b)},j(b)\circ\pi\big) .\]

It follows from Lemmas~\ref{eq:BOS} and~\ref{lem:FRA} that
\begin{multline*}
\nabla_{I_0(1\otimes b)}\big(I_0(1\otimes b')\big)
=\nabla_{\big(\nabla^A_{j(b)},j(b)\circ\pi\big)}
\big(\nabla^A_{j(b')},j(b')\circ\pi\big)
=\left(\nabla^A_{\nabla^L_{j(b)}j(b')},
\big(\nabla^L_{j(b)}j(b')\big)\circ\pi\right) \\
=\left(\nabla^A_{j(\nabla^B_{j(b)}b')},j(\nabla^B_{j(b)}b')\circ\pi\right)
=I_0\big(1\otimes\nabla^B_{j(b)}b'\big)
.\qedhere\end{multline*}
\end{proof}

\begin{lemma}\label{lem:3.21}
Let $s_1,\cdots,s_n$ be sections of $\pi^! L\to A[1]$.
Suppose that each section $s_k$ is $\pi$-related to a section $v_k$ of $L\to M$.
Provided at least one of the sections $s_1,\cdots,s_n$ is null, we have
\[ P_\cU\circ\PBW (s_1\odot\cdots\odot s_n) = 0 .\]
\end{lemma}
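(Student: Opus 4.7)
The strategy is induction on $n$, exploiting the PBW recursion~\eqref{eq:PBW} together with Corollary~\ref{Omacron}. I will actually prove the following strengthened claim: under the hypotheses, $\PBW(s_1 \odot \cdots \odot s_n)$ can be written in $\cU(\pi^! L)$ as a finite $C^\infty(A[1])$-linear combination of associative products $t_1 \circc \cdots \circc t_m$ of $\pi$-related sections $t_k \in \Gamma(\pi^! L)$, where for each summand at least one factor is null. The lemma is immediate from this: since $P_\cU$ is $C^\infty(A[1])$-linear and Lemma~\ref{lem:P_U} sends each such product to the class $\class{w_1 \circc \cdots \circc w_m}$ in $\cU_{L/A}$ (with $w_k$ the $L$-component of $t_k$), this class vanishes as soon as one $w_k=0$, i.e.\ as soon as one factor is null.

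The base case $n=1$ is trivial because $\PBW(s_1)=s_1$ is a one-factor product of a null $\pi$-related section. For the inductive step, fix $j$ with $s_j$ null and expand $n\cdot\PBW(s_1\odot\cdots\odot s_n)$ via~\eqref{eq:PBW} as a signed sum of terms of two kinds: (i) $s_i\circc\PBW(s^{\{i\}})$, and (ii) $\PBW(\nabla_{s_i}s^{\{i\}})$. For (i) with $i=j$, the null section $s_j$ stands on the left; iterating~\eqref{eq:PBW} on $\PBW(s^{\{j\}})$ expresses it as a $C^\infty(A[1])$-linear combination of products of $\pi$-related sections (no null factor is needed here, since Corollary~\ref{Omacron} ensures that $\nabla$ preserves $\pi$-relatedness), and commuting the resulting scalar coefficients through $s_j$ via the graded Lie--Rinehart identity $s\circc f=(-1)^{|s||f|}f\cdot s+\rho(s)(f)$ preserves $s_j$ as a leading null factor. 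For (i) with $i\neq j$, the tuple $s^{\{i\}}$ still contains $s_j$, so the inductive hypothesis gives a suitable expansion of $\PBW(s^{\{i\}})$; composing with $s_i$ on the left and shuffling coefficients as above preserves the null factor residing inside each inner product.

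For terms of type (ii), expand
\[ \nabla_{s_i}s^{\{i\}}=\sum_{k\neq i}(\pm)\,s_1\odot\cdots\odot\nabla_{s_i}s_k\odot\cdots\odot\widehat{s_i}\odot\cdots\odot s_n .\]
Corollary~\ref{Omacron} is the decisive input: $\nabla_s s'$ is null whenever $s$ or $s'$ is null among $\pi$-related sections. Hence every summand of $\nabla_{s_i}s^{\{i\}}$ is a $\odot$-product of $n-1$ $\pi$-related sections still containing a null factor (namely $s_j$ itself if $k\neq j$ and $i\neq j$; $\nabla_{s_i}s_j$ if $k=j$; $\nabla_{s_j}s_k$ if $i=j$), and the inductive hypothesis supplies the required expansion of each $\PBW(\nabla_{s_i}s^{\{i\}})$.

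The only substantial bookkeeping nuisance is the non-centrality of $C^\infty(A[1])$-valued coefficients inside $\cU(\pi^! L)$, but the Lie--Rinehart identity above ensures that every commutation merely produces further $C^\infty(A[1])$-linear combinations of products of $\pi$-related sections in which the null factor is retained. Collecting all contributions proves the inductive step and hence the strengthened claim, from which the lemma follows.
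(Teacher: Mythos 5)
Your argument is essentially the paper's own proof: both proceed by induction through the PBW recursion~\eqref{eq:PBW}, use Corollary~\ref{Omacron} (equivalently Lemma~\ref{eq:BOS}) to see that $\nabla$ preserves both $\pi$-relatedness and nullity --- so that every $\odot$-product occurring in $\nabla_{s_i}s^{\{i\}}$ retains a null factor --- and conclude by applying Equation~\eqref{eq:P_U} to products of $\pi$-related sections one of which is null. The one imprecision is your handling of coefficients: the identity $s\circc f=(-1)^{\degree{s}\degree{f}}f\cdot s+\rho(s)(f)$ produces a correction term $\rho(s_j)(f)\cdot(\cdots)$ in which the null factor $s_j$ has disappeared, so commutation does \emph{not} literally ``preserve $s_j$ as a leading null factor,'' and with genuinely nonconstant coefficients $f$ your strengthened claim would fail at exactly this point. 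The step is nevertheless harmless here, because no nonconstant coefficients ever arise: each $\nabla_{s_i}s_k$ is again a single $\pi$-related section by Corollary~\ref{Omacron}, so the recursion~\eqref{eq:PBW} applied to a $\odot$-product of $\pi$-related sections yields a purely numerical (signed, rational) combination of $\circc$-products of $\pi$-related sections, and $\rho(s)$ annihilates constants. The paper's proof avoids the issue by observing precisely this closure property; you should either do the same or note explicitly that the correction terms vanish because the coefficients are constants.
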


\begin{proof}
It follows immediately from Lemma~\ref{eq:BOS} that
$\nabla_{s_i}(s_1\odot\cdots\odot s_{i-1}\odot s_{i+1}\odot\cdots\odot s_n)$
is a sum of products $t_1\odot\cdots\odot t_{n-1}$
of sections of $\pi^! L\to A[1]$ in which each $t_j$ is $\pi$-related
to a section $w_j$ of $L\to M$.
It then follows from Equation~\eqref{eq:PBW} that
$\PBW(s_1\odot\cdots\odot s_n)$ is also a sum of products
of sections of $\pi^! L\to A[1]$,
each of which is $\pi$-related to a section of $L\to M$.
Furthermore, if any of the sections $s_1,\cdots,s_n$ is null,
then $\PBW(s_1\odot\cdots\odot s_n)$ is a sum of products
of sections of $\pi^! L\to A[1]$ all $\pi$-related to sections of $L\to M$
and at least one of them is null.
The conclusion now follows from Equation~\eqref{eq:P_U}.
\end{proof}

\subsubsection{Proof of Proposition~\ref{pro:nabla}}

We are now ready to prove Proposition~\ref{pro:nabla}.

\begin{proof}[Proof of Proposition~\ref{pro:nabla} ]
First we prove by induction the identity
\[ P_\cU\circ\PBW\circ I_S = \id\otimes\pbw ,\]
which is equivalent to Equation~\eqref{eq:P_PBW_I}.
Since both $P_\cU\circ\PBW\circ I_S$ and
$\id\otimes\pbw$ are $\Gamma(\Lambda^\bullet A^\vee)$-linear,
it suffices to verify that
\[ P_\cU\circ\PBW\circ I_S(b_1\odot\cdots\odot b_n)
= \pbw(b_1\odot\cdots\odot b_n) \]
for all $n\in\NN$ and $b_1,\dots,b_n\in\Gamma(B)$.
It is clear that the identity holds for $n=0$ and $n=1$.
Assuming that the identity holds for $n\leq N-1$,
it remains to prove that it holds for $n=N$ as well.
According to ~\eqref{eq:sym}, we have
\begin{align*}
P_\cU\circ\PBW\circ I_S(b_0\odot b_1\odot\cdots\odot b_n)
&= P_\cU\circ\PBW(I_0(b_0)\odot I_0(b_1)\odot\cdots\odot I_0(b_n)) \\
&= \frac{1}{n+1}\sum_{i=0}^n
P_\cU\big(I_0(b_i)\circc\PBW(I_S(b^{\{i\}}))
-\PBW(\nabla_{I_0(b_i)}I_S(b^{\{i\}})) \big)
\end{align*}
for all $b_0,b_1,\dots,b_n\in\Gamma(B)$,

Since the section $I_0(b_i)$ of $\pi^! L\to A[1]$ is $\pi$-related to
the section $b_i$ of $L\to M$ for each $i\in\{0,1,\cdots,n\}$,
it follows from Corollary~\ref{Omacron} and Equation~\eqref{eq:PBW} that
$\PBW(I_S b^{\{i\}})\in\cU(\pi^! L)$ can be rewritten as a sum
of products of sections of $\pi^! L\to A[1]$ that are $\pi$-related to sections
of $L\to M$.
Therefore, it follows from Equation~\eqref{eq:P_U}
and our induction assumption that
\[ P_\cU \big(I_0(b_i)\circc\PBW(I_S (b^{\{i\}})\big)
= j(b_i)\cdot(P_\cU\circ\PBW\circ I_S)(b^{\{i\}})
= j(b_i)\cdot\pbw(b^{\{i\}}) .\]

On the other hand, from Lemma~\ref{lem:3.20} and our induction assumption,
 it follows that
\[ P_\cU\circ\PBW\big(\nabla_{I_0(b_i)}I_S (b^{\{i\}})\big)
= P_\cU\circ\PBW\circ I_S \big(\nabla^B_{j(b_i)}(b^{\{i\}})\big)
= \pbw\big(\nabla^B_{j(b_i)} (b^{\{i\}})\big) .\]
Hence we obtain
\begin{multline*}
P_\cU\circ\PBW\circ I_S(b_0\odot b_1\odot\cdots\odot b_n)
= \frac{1}{n+1}\sum_{i=0}^{n}
\left(j(b_i)\cdot\pbw(b^{\{i\}})
-\pbw\big(\nabla^B_{j(b_i)} (b^{\{i\}})\big)\right) \\
= \pbw(b_0\odot b_1\odot\cdots\odot b_n)
.\end{multline*}

To establish Equation~\eqref{eq:P_PBW_H}, it suffices to prove that
\[ P_\cU\circ\PBW\circ H_S = 0 \]
since $H_S^{\pbw}\circ\PBW=\PBW\circ H_S$ and $\PBW$ is invertible.
Furthermore, since the map $P_\cU\circ\PBW\circ H_S$
is $\Gamma(\Lambda^\bullet A^\vee)$-linear, it suffices to show that
\begin{equation}\label{eq:PUPBWHS}
P_\cU\circ\PBW\circ H_S (s_1\odot\cdots\odot s_n) = 0
\end{equation}
for sections $s_1,\cdots,s_n$ of $\pi^! L\to A[1]$
that are $\pi$-related to sections of $L\to M$.

According to ~\eqref{eq:sym}, we have
\[ H_S(s_1\odot\cdots\odot s_n)
= \sum_{i=1}^n (I_0\circ P_0)(s_1)\odot\cdots\odot(I_0\circ P_0)(s_{i-1})\odot
H_0(s_i)\odot s_{i+1}\odot\cdots\odot s_n .\]
Since $s_1,\dots,s_n$ are sections of $\pi^! L\to A[1]$
that are $\pi$-related to sections of $L\to M$,
it is easy to see that $(I_0\circ P_0)(s_1),\dots,(I_0\circ P_0)(s_{i-1})$ as well
are sections of $\pi^! L\to A[1]$ that are $\pi$-related to sections of $L\to M$.
Moreover, $H_0(s_i)$ is clearly a null section.
Thus Equation~\eqref{eq:PUPBWHS} follows from Lemma~\ref{lem:3.21}.
\end{proof}

\subsubsection{Proof of Theorem~\ref{thm:Genova}}

We are now ready to prove Theorem~\ref{thm:Genova}.

\begin{proof}[Proof of Theorem~\ref{thm:Genova}]
Recall that, according to the homological perturbation lemma,
the perturbations $I_S^{\hp}$ and $H_S^{\hp}$ of $I_S^{\pbw}$
and $H_S^{\pbw}$ are given by the following formulas:
\begin{gather}
I_S^{\hp} = I_{S}^{\pbw} + \sum_{k=1}^\infty \left(H_S^{\pbw} \circ
(D_S^{\pbw}- D_\cU)\right)^k\circ I_{S}^{\pbw} , \nonumber\\
H_S^{\hp} = H_S^{\pbw} + \sum_{k=1}^\infty H_S^{\pbw} \circ
\left( (D_S^{\pbw}-D_\cU) \circ H_S^{\pbw}\right)^k \label{eq:H^hp}
.\end{gather}
Hence the identities $P_\cU \circ I_S^{\pbw} = \id$
and $P_\cU \circ H_S^{\pbw} = 0$ imply that
\begin{equation}\label{eq:Php}
P_\cU \circ I_S^{\hp} = \id
\qquad\text{and}\qquad
P_\cU \circ H_S^{\hp} = 0.
\end{equation}
Finally, we obtain
\begin{multline*}
P_\cU = P_\cU \circ \left(H_S^{\hp} \circ D_\cU
+ D_\cU \circ H_S^{\hp} + I_S^{\hp} \circ P_S^{\hp} \right) \\
= (P_\cU\circ H_S^{\hp})\circ D_\cU
+\dA\circ(P_\cU\circ H_S^{\hp})
+(P_\cU\circ I_S^{\hp})\circ P_S^{\hp} = P_S^{\hp}
,\end{multline*}
where we used Equations~\eqref{eq:Php} together with the facts that Diagram~\eqref{eq:contr_hp} is a contraction
and $P_\cU$ is a cochain map.
Since $P_\cU = P_S^{\hp}$ is a surjective map,
it follows from the identities
\[ P_S^{\hp}\circ D_\cU = \dA^{\hp}\circ P_S^{\hp}
\qquad\text{and}\qquad
P_\cU\circ D_\cU = \dA\circ P_\cU \]
that $\dA = \dA^{\hp}$.

This concludes the proof of Theorem~\ref{thm:Genova}.
\end{proof}

\subsection{Proof of Theorem~\ref{theor:main}}

Finally we are ready to complete the proof of Theorem~\ref{theor:main}.

Let us first recall the following standard homotopy transfer theorem.

\begin{theorem}[{\cite[Theorem~1.3]{MR3276839}},
see Section~12 in loc.~cit.\ for
the $A_\infty$-algebra case]\label{theor:berglund}
\label{thm:Atransfer}
Let $(A,d)$ and $(K,d_K)$ be cochain complexes and let
\[ \begin{tikzcd}
(A,d) \arrow[r, "P", shift left] \arrow["H", loop left] &
(K,d_K) \arrow[l, "I", shift left]
\end{tikzcd},\]
be a contraction of $(A,d)$ onto $(K,d_K)$. Given an $A_\infty$-algebra
structure on $A$ (with $d$ as unary operation), there exists
a `transferred' $A_\infty$-algebra structure on $K$ and a pair
of $A_\infty$-quasi-isomorphisms $\mathbb{I}:K\rightsquigarrow A$
and $\mathbb{P}:A\rightsquigarrow K$ having the cochain maps $I$
and $P$ as respective first Taylor coefficients.
\end{theorem}

\begin{proof}[Proof of Theorem~\ref{theor:main}] 
Consider the contraction \eqref{eq:contraction_S}
in~Theorem~\ref{thm:Genova}.
According to homotopy transfer Theorem~\ref{thm:Atransfer}, we can equip
$\Gamma (\Lambda^\bullet A^\vee )\otimes_R \cU_{L/A}$ with an $A_\infty$-algebra
from the dg associative algebra structure on $\cU(\pi^! L)$.

In order to complete the proof of Theorem~\ref{theor:main},
it remains to show that the $A_\infty$-algebra structures obtained
from different choices of splitting $j : B \to L$ of the short exact
sequence \eqref{eq:splitting1} and different choices
of the $L$-connections $\nabla^A, \nabla^B, \nabla^L$
of Lemma~\ref{lem:FRA} and $T_M$-connection on $A$ 
are all $A_\infty$-isomorphic.

Now let $(P_S^{\mathsf{hp}}, I_S^{\mathsf{hp}}, H_S^{\mathsf{hp}})$
and $({}'\!P_S^{\mathsf{hp}}, {}'\!I_S^{\mathsf{hp}}, {}'\!H_S^{\mathsf{hp}})$
be a pair of contractions of $(\cU(\pi^! L), D_\cU)$
onto $(\Gamma(\Lambda^\bullet A^\vee) \otimes_R \cU_{L/A}, d_A)$
induced by two different choices of splitting and connections.
Denote by $\mathbb{A}$ and $\mathbb{A}'$ the space
$\Gamma(\Lambda^\bullet A^\vee) \otimes_R \cU_{L/A}$
equipped with the two transferred $A_\infty$-algebra
structures respectively.
Use Theorem~\ref{theor:berglund} to promote $I_S^{\mathsf{hp}}$
and ${}'\!P_S^{\mathsf{hp}}$ to $A_\infty$-quasi-isomorphisms
$\mathbb{I} : \mathbb{A} \rightsquigarrow (\cU(\pi^! L),D_\cU)$
and $\mathbb{P}' : (\cU(\pi^! L),D_\cU) \rightsquigarrow
\mathbb{A}'$. The composition $\mathbb{P}' \circ \mathbb{I} : \mathbb{A}
\rightsquigarrow \mathbb{A}'$ is an $A_\infty$-quasi-isomorphism
whose first Taylor coefficient is given by
${}'\!P_S^{\mathsf{hp}} \circ I_S^{\mathsf{hp}}$.
But ${}'\!P_S^{\mathsf{hp}} = P_\cU = P_S^{\mathsf{hp}}$.
Hence the first Taylor coefficient of $\mathbb{P}' \circ \mathbb{I}$
is $P_S^{\mathsf{hp}}\circ I_S^{\mathsf{hp}}=\id$.
As an $A_\infty$-morphism with invertible first Taylor coefficient
is invertible, we conclude that $\mathbb{P}'\circ\mathbb{I}$ is actually an $A_\infty$-isomorphism.

This concludes the proof of Theorem~\ref{theor:main}.
\end{proof}

\section{The dga of a matched pair of Lie algebroids}

\subsection{Universal enveloping algebra of the dg Lie algebroid}

Let $(L, A)$ be a Lie pair. In this section we study
in details the case when the splitting $j : L/A \to L$
of the short exact sequence
\[ 0 \to A \to L \to L/A \to 0 \]
is \emph{integrable}, i.e.~it identifies $L/A$
to a Lie subalgebroid $j(L/A)$ of $L$. In this case,
$B := L/A$ inherits from $j(B)$
the structure of a Lie algebroid, and $(A, B)$ is a 
\emph{matched pair of Lie algebroid}. Recall from 
\cite{MR1460632, MR1716681} that a matched pair of Lie 
algebroids is a pair $(A,B)$, where $A$ and
$B$ are Lie algebroids over the same manifold $M$, equipped with a Lie algebroid structure on their direct sum $A \oplus B$, also denoted $A \bowtie B$, 
such that both the natural inclusions
$i:A\hookrightarrow A\oplus B$, 
$\alpha \mapsto (\alpha, 0)$
and $j:B\hookrightarrow A \oplus B$,
$\beta\mapsto(0,\beta)$ are Lie algebroid morphisms.
In this case, $(A,A\bowtie B)$ is a Lie pair,
and the natural inclusion $j : B \to A \oplus B$ is an 
integrable splitting. Clearly, every Lie pair with an 
integrable splitting arises in this way.

\begin{example}\label{exmpl:action}
Let $\frakg$ be a finite dimensional Lie algebra,
and let $M$ be a $\frakg$-manifold, i.e.~$M$ is equipped
with a $\frakg$-action. Consider the action Lie algebroid
$\frakg \ltimes M$. Then $(\frakg \ltimes M, TM)$
is a matched pair of Lie algebroids in a natural way \cite{MR3650387}.
\end{example}

\begin{example}\label{exmpl:compl_man}
Let $X$ be a complex manifold. As usual, the complexified tangent bundle $T_{\CC} X$ splits as the direct sum $T^{0,1} X \oplus T^{1,0} X$ of the antiholomorphic and the holomorphic tangent bundles (the $\pm i$-eingenbundles of the 
almost complex structure). Then $(T^{0,1} X, T^{1,0} X)$ is a matched pair of complex Lie algebroids and $T^{0,1} X \bowtie T^{1,0} X \cong T_{\CC} X$.
\end{example}

Next we explore the $L_\infty$-algebroid structure
on $A[1] \times_M B \to A[1]$ in the matched pair case.
Let $(A,B)$ be a matched pair of Lie algebroids over a manifold $M$.
Since $A$ and $B$ play symmetric roles as
a pair of complementary Lie subalgebroids of the Lie algebroid $L=A\bowtie B$,
we have a pair of Bott connections: the Bott $A$-connection on
$B$ and the Bott $B$-connection on $A$, both denoted by $\nabla^{\Bott}$
by abuse of notations. The Lie algebroid structure on $L=A\bowtie B$
can be explicitly expressed by those of $A$ and $B$ together with
the Bott-connections:
\begin{equation}
\label{eq:anchor-AB}
\rho_L (\alpha, \beta ) = \rho_A (\alpha) + \rho_B (\beta) 
\end{equation}
and
\begin{equation}
\label{eq:bracket-AB}
[(\alpha_1, \beta_1), (\alpha_2, \beta_2)]_L
= \Big( [\alpha_1, \alpha_2]_A + \nabla^{\mathsf{Bott}}_{\beta_1} \alpha_2
- \nabla^{\mathsf{Bott}}_{\beta_2} \alpha_1, [\beta_1, \beta_2]_B
+ \nabla^{\mathsf{Bott}}_{\alpha_1} \beta_2
- \nabla^{\mathsf{Bott}}_{\alpha_2} \beta_1\Big) ,
\end{equation}
for all $\alpha, \alpha_1,\alpha_2 \in \Gamma (A)$,
and $\beta, \beta_1, \beta_2 \in \Gamma (B)$.

In addition, according to Mackenzie~\cite{MR2831518,}
\begin{equation}
\label{eq:double}
\begin{tikzcd}[column sep=small,row sep=small]
A\oplus B \arrow{r} \arrow{d} & B \arrow{d} \\
A \arrow{r} & M
\end{tikzcd} 
\end{equation}
is a double Lie algebroid.
As a consequence, by a theorem of Voronov~\cite{MR2971727}, 
$(A[1]\oplus B, d^{\Bott}_A )$ is a dg Lie algebroid over $(A[1],\dA)$.
See~\cite[Corollary 4.2]{MR4325718}.

\begin{lemma}
Let $(A, B)$ be a matched pair of Lie algebroid.
Fix the canonical integrable splitting $j : B \to A \bowtie B$
of the short exact sequence $0 \to A \to A \bowtie B \to B \to 0$
of the Lie pair $(A, A \bowtie B)$. Then the $L_\infty$-algebroid structure
on $A[1] \times_M B\to A[1]$ as in Theorem~\ref{prop:L_infty_alg}
agrees with the dg Lie algebroid structure
arising from the double Lie algebroid
\eqref{eq:double} as above.
\end{lemma}
\begin{proof}
We denote by $d_A^{\mathsf{Bott}}$ the homological
vector field on $A[1] \times_M B$ for the dg vector
bundle $A[1] \times_M B\to A[1]$.
Denote, by $\widetilde{d_A^{\mathsf{Bott}}}$, the homological
vector field on $A[1] \times_M B [1]$ by applying the shifting $[1]$-functor
\cite{MR2709144}.
By $d_{\mathsf{Lie}}$, we denote the homological vector field on
$A[1]\oplus B[1]$ induced from the $\ZZ$-graded Lie algebroid
$A[1]\oplus B\to A[1]$, i.e., its Chevalley--Eilenberg differential.
In view of Theorem~\ref{prop:L_infty_alg},
it suffices to show that 
\[ \widetilde{d_A^{\mathsf{Bott}}} + d_{\mathsf{Lie}} = d_L ,\]
where $d_L$ is the homological vector field
on $A[1]\oplus B[1]\cong(A\bowtie B)[1]$
corresponding to the Lie algebroid structure
on $L=A\bowtie B$.
This can be verified directly \cite[Lemma~4.3]{MR4325718}.
\end{proof}

Next we prove the following:

\begin{lemma}\label{lem:Phi_B}
Let $(A,B)$ be a matched pair of Lie algebroids.
Denote $L := A \bowtie B$ and consider the Lie pair $(A,L)$.
The map
\begin{equation}\label{eq:U(B)=U(L/A)}
\Phi : \mathcal{U}(B) \to \mathcal{U}_{L/A},
\quad \beta_1 \cdot \cdots \cdot \beta_k \mapsto \overline{(0,\beta_1)
\cdot \cdots \cdot (0,\beta_k)}, 
\end{equation}
is a well-defined $R$-module isomorphism (here, as usual, the notation $\overline{u}$ 
refers to the equivalence class in
$\mathcal{U}_{L/A}=\frac{\mathcal{U}(L)}{\mathcal{U}(L)\cdot\Gamma(A)}$
of an element $u\in\mathcal{U}(L)$). 
\end{lemma}
\begin{proof}
First of all, $\Phi$ is the composition of the morphism
$\tilde{j}:\mathcal{U}(B)\to\mathcal{U}(L)$ of universal enveloping algebras
determined by the Lie algebroid morphism $j:B\to L$, followed by the projection
$\pr:\mathcal{U}(L)\to\mathcal{U}_{L/A}$, so it is indeed well-defined.
Since both $\tilde{j}$ and $\pr$ are morphisms of $R$-modules, so is $\Phi$.
From~\eqref{eq:bracket-AB}, it is simple to check by induction
that $\mathcal{U}(L)\cong \mathcal{U}(B)\cdot \mathcal{U}(A)$, where $\cdot$ stands for the 
multiplication in $\mathcal{U}(L)$. Thus it follows
that $\Phi$ is surjective and its kernel is isomorphic
to $\mathcal{U}(L) \cdot \Gamma (A)$. Thus the conclusion follows.
\end{proof}

We are now ready to prove the following:

\begin{proposition}\label{theor:dga_MP}
Let $(A,B)$ be a matched pair of Lie algebroids.
Then $\Gamma(\Lambda^\bullet A^\vee)\otimes_R\mathcal{U}(B)$
is a \dga\ in a natural way.
\end{proposition}
\begin{proof}
As $\pi^\ast B = A[1] \times_M B \to A[1]$ is a dg Lie algebroid,
its universal enveloping algebra $\mathcal{U}(\pi^\ast B)$ is a dga.
Clearly, $\mathcal{U}(\pi^\ast B)$ is generated, as a graded algebra,
by $C^\infty (A[1]) = \Gamma (\Lambda^\bullet A^\vee)$ and $\Gamma (B)$.
Also, note that $ \mathcal{U}(B)$ is naturally an $A$-module.
In order to conclude the proof, we will show that there is an isomorphism of
dg modules over $\big(\Gamma (\Lambda^\bullet A^\vee), \ d_A\big)$:
\[ \Psi : \mathcal{U}(\pi^\ast B)
\cong \Gamma (\Lambda^\bullet A^\vee) \otimes_R \mathcal{U}(B) .\]
We define $\Psi$ on generators as
\begin{equation}\label{eq:Psi_gener}
\Psi (\omega) = \omega \quad \text{and} \quad \Psi (\beta) = \beta
\end{equation}
for all $\omega \in \Gamma (\Lambda^\bullet A^\vee)$ and $\beta \in \Gamma (B)$.
It is clear that $\Psi$ is well-defined by~\eqref{eq:Psi_gener}.
It is also clear that $\Psi$ intertwines the differentials.
It remains to check that $\Psi$ is bijective. To do this we argue, first notice that $\Psi$ preserves the filtrations
\[ \cdots \subseteq \mathcal{U}^{k-1} (\pi^\ast B)
\subseteq \mathcal{U}^k (\pi^\ast B) \subseteq \cdots
\subseteq \mathcal{U} (\pi^\ast B) \]
and
\[
\cdots \subseteq \Gamma (\Lambda^\bullet A^\vee) \otimes_R \mathcal{U}^{k-1}(B)
\subseteq \Gamma (\Lambda^\bullet A^\vee) \otimes_R \mathcal{U}^{k}(B)
\subseteq \cdots \subseteq \Gamma (\Lambda^\bullet A^\vee) \otimes_R \mathcal{U}(B)
\]
The rest follows from the following simple remarks:
\begin{itemize}
\item The quotients $\mathcal{U}^k (\pi^\ast B)/ \mathcal{U}^{k-1} (\pi^\ast B)$
and $\Gamma (\Lambda^\bullet A^\vee) \otimes_R \mathcal{U}^{k}(B) /
\Gamma (\Lambda^\bullet A^\vee) \otimes_R \mathcal{U}^{k-1}(B)$
are both canonically isomorphic to $\Gamma (\Lambda^\bullet A^\vee \otimes S^k B)$,
\item the diagrams
\[
\begin{tikzcd}
0 \arrow[r] & \mathcal{U}^{k-1}(\pi^\ast B) \arrow[r] \arrow[d, "\Psi"']
& \mathcal{U}^{k}(\pi^\ast B) \arrow[r] \arrow[d, "\Psi"']
& \Gamma (\Lambda^\bullet A^\vee \otimes S^k B) \arrow[d, equal] \arrow[r] & 0 \\
0 \arrow[r] & \Gamma (\Lambda^\bullet A^\vee) \otimes_R \mathcal{U}^{k-1}(B) \arrow[r]
& \Gamma (\Lambda^\bullet A^\vee) \otimes_R \mathcal{U}^{k}(B) \arrow[r]
& \Gamma (\Lambda^\bullet A^\vee \otimes S^k B) \arrow[r] & 0
\end{tikzcd}
\]
commute.
\end{itemize}

\end{proof}

\subsection{Statement of main theorem}

The main result of this section is the following:

\begin{theorem}\label{theor:A_infty_MP}
Let $(A, B)$ be a matched pair of Lie algebroids and denote $L = A \bowtie B$.
Fix the canonical integrable splitting $j : B \to A \bowtie B$
of the short exact sequence $0 \to A \to A \bowtie B \to B \to 0$
of the Lie pair $(A, A \bowtie B)$. Then the $A_\infty$-algebra structure
on $\Gamma (\Lambda^\bullet A^\vee) \otimes_R \mathcal{U}_{L/A}$
given by Theorem~\ref{theor:main} agrees with the natural dga structure
of Proposition~\ref{theor:dga_MP} (in particular the operations of arity
greater than 3 all vanish), where
$\mathcal{U}_{L/A}$ is identified with $\mathcal{U}(B)$ via the $R$-module isomorphism $\Phi$
in Lemma~\ref{lem:Phi_B}.
\end{theorem}
Before proving Theorem~\ref{theor:A_infty_MP} we need a lemma.

\begin{lemma}\label{lem:ht_dga}
Let $(\mathcal{A},d_\mathcal{A})$ and $(\mathcal{B}, d_\mathcal{B})$ be dgas and let
\begin{equation}\label{eq:ht_dga}
\begin{tikzcd}
(\mathcal{A},d_\mathcal{A}) \arrow[r, "P", shift left] \arrow["H", loop left] &
(\mathcal{B},d_\mathcal{B}) \arrow[l, "I", shift left]
\end{tikzcd},
\end{equation}
be a contraction of cochain complexes with the additional property
that $I : (\mathcal{B},d_\mathcal{B}) \to (\mathcal{A},d_\mathcal{A})$ is
a morphism of dgas.
Then the $A_\infty$-algebra structure induced on $\mathcal{B}$ via homotopy transfer agrees with the preexisting dga structure, i.e.
\begin{enumerate}
\item the binary operation agrees with the preexisting associative product, and
\item all higher operations vanish.
\end{enumerate} 
\end{lemma}
\begin{proof}
Denote by $\star$ both the associative products in $\mathcal{A}$ and $\mathcal{B}$.
Recall, from the homotopy transfer theorem \cite{MR720689, MR2640649, MR3276839},
that the higher operations $\alpha_k : \mathcal{B}^{\otimes k} \to \mathcal{B}$
induced on $\mathcal{B}$ via homotopy transfer, are given by
\[
\alpha_1 = d_\mathcal{B}, \quad \alpha_k := P \circ \beta_k, \quad k \geq 2,
\]
where $\beta_k : \mathcal{B}^{\otimes k} \to \mathcal{A}$ are inductively
 defined by the following relations:
\[
\gamma_1 = -I, \quad \gamma_k = H \circ \beta_k,
\]
and 
\[
\beta_k (b_1, \cdots, b_k) = \sum_{i+j = k}(-1)^{\chi (i, j, b)} \gamma_i (b_1, \cdots, b_i) \star \gamma_j (b_{i+1}, \cdots, b_{i+j})
\]
$b_1, \cdots, b_k \in \mathcal{B}$, with $\chi (i,j,b) = i-1 + (j-1) (|b_1| + \cdots + |b_i|)$. Therefore
\[
\alpha_2 (b_1, b_2) = P \beta_2 (b_1, b_2) = P (\gamma_1 (b_1) \star \gamma_1 (b_2)) = P (I(b_1) \star I(b_2)) = (P \circ I)(b_1 \star b_2) = b_1 \star b_2.
\]
In order to see that the higher operations vanish, it suffices
to show that $\gamma_k = 0$ for all $k \geq 2$. This can be done by induction on $k$. For $k = 2$,
\[
\gamma_2 (b_1, b_2) = (H \circ \beta_2) (b_1, b_2) = (H \circ I) (b_1 \star b_2) = 0.
\] 
The rest is straightforward.
\end{proof}

We are now ready to prove Theorem~\ref{theor:A_infty_MP}.

\begin{proof}[Proof of Theorem~\ref{theor:A_infty_MP}]
Let $(A,B)$ be a matched pair of Lie algebroids,
and let $(A,L)$ be the associated Lie pair, where $L = A \bowtie B$.
According to Lemma~\ref{lem:Phi_B}, we have a
canonical isomorphism $\cU (B)\cong\cU_{L/A}$.
Choose $L$-connection $\nabla^A$, $\nabla^B$, and $\nabla^L$
on $A$, $B$ and $L$ and a linear
connection on $A$ as in Section~\ref{sec:A_infty_LP} and let 
\begin{equation}
\label{eq:Taipei}
\begin{tikzcd}
\Big(\cU(\pi^! L),D_\cU\Big)
\arrow[r, "P_\cU", shift left] \arrow["H_\cU", loop left] &
\Big(\Gamma(\Lambda^\bullet A^\vee)\otimes_R\cU (B),\dA\Big)
\arrow[l, "I_\cU", shift left]
\end{tikzcd}
\end{equation}
be the contraction from Theorem~\ref{thm:Genova} by
identifying $\cU_{L/A}$ with $\cU (B)$ as in Lemma~\ref{lem:Phi_B}.
Recall from the proof of Proposition~\ref{theor:dga_MP} that,
in the present matched pair case, $\pi^\ast B = A[1] \times_M B \to A[1]$
is a dg Lie algebroid and that
$\Gamma(\Lambda^\bullet A^\vee)\otimes_R\cU (B) \cong \mathcal{U}(\pi^\ast B)$.
It is simple to check that the morphism \eqref{eq:I0}:
$I_0 : A[1]\times_M B\to \pi^! L \cong T_{A[1]}\times_{T_M}L$
as defined by~\eqref{eq:I00} is a morphism of dg Lie algebroids
over $A[1]$.
Therefore, it induces a morphism of dgas between their universal enveloping
algebras:
\[ I : \Big(\Gamma(\Lambda^\bullet A^\vee)\otimes_R\cU (B),\dA\Big)
\to \Big(\cU(\pi^! L),D_\cU\Big) .\]

Our next goal is to prove that $I$ coincides with the injection
$I_\cU$ in~\eqref{eq:Taipei}.

It is clear that 
\[ P_\mathcal{U} \circ I = \operatorname{id} .\]
Next, we prove that 
\begin{equation}
\label{eq:HS}
H_S^{\mathsf{pbw}} \circ I = 0,
\end{equation}
where $H_S^{\mathsf{pbw}}$ is the morphism as in~\eqref{eq:Roma}.

Since both $H_S^{\mathsf{pbw}}$ and $I$ are $\Gamma (\Lambda^\bullet A^\vee)$-linear,
it suffices to check that $H_S^{\mathsf{pbw}} \circ I$ vanish on elements of the form
\[ \mathsf{pbw} (b_1 \odot \cdots \odot b_n), \quad b_i \in \Gamma (B) .\]
First of all, we prove, by induction on $n$, that 
with the above choice of connections, the diagram
\begin{equation}\label{diag:I_pbw}
\begin{tikzcd}
\mathcal{U}^{n}(\pi^\ast B) \arrow[d, "I"'] & \Gamma (S^n \pi^\ast B) \arrow[d, "I_S"'] \arrow[l, "\mathsf{pbw}"'] \\
\mathcal{U}^{n}(\pi^! L) & \Gamma (S^n \pi^! L) \arrow[l, "\mathsf{PBW}"']
\end{tikzcd}
\end{equation}
commutes.

It is obvious that it holds when $n=1$. Assume that it holds
for $n=k-1$.
We compute, for any $b_1, \cdots, b_k \in \Gamma (B)$,
\[
I \left(\mathsf{pbw} (b_1 \odot \cdots \odot b_k) \right) = 
\frac{1}{k} \sum_{i = 1}^k \Big(I (j (b_i) \cdot \mathsf{pbw} (b^{\{i\}}) - I \mathsf{pbw} (\nabla^B_{j(b_i)} b^{\{i\}})\Big).
\]
The action of $j(b_i)$ on $\mathsf{pbw} (b^{\{i\}})$ agrees with the multiplication by $b_i \cdot \mathsf{pbw} (b^{\{i\}})$ in $\mathcal{U}(B)$. Hence
\[
\begin{aligned}
I \left(\mathsf{pbw} (b_1 \odot \cdots \odot b_k) \right) & = 
\frac{1}{k} \sum_{i = 1}^k \Big(I(b_i \cdot \mathsf{pbw} (b^{\{i\}})) - I (\mathsf{pbw} (\nabla^B_{j(b_i)} b^{\{i\}}))\Big) \\
& = \frac{1}{k} \sum_{i = 1}^k \Big(I(b_i) \cdot I(\mathsf{pbw} (b^{\{i\}})) - \mathsf{PBW} (I_S(\nabla^B_{j(b_i)} b^{\{i\}}))\Big) \\
& = \frac{1}{k} \sum_{i = 1}^k \Big(I_0(b_i) \cdot \mathsf{PBW} (I_S(b^{\{i\}})) - \mathsf{PBW} (I_S(\nabla^B_{j(b_i)} b^{\{i\}}))\Big), 
\end{aligned}
\]
where we used the induction assumption. It remains to check that
\[
I_S(\nabla^B_{j(b_i)} b^{\{i\}}) = \nabla_{I_0 (b_i)} I_S(b^{\{i\}}).
\]
For this purpose, it indeed suffices to show that, for all $b, b' \in \Gamma (B)$
\[
I(\nabla^B_{j(b)} b') = \nabla_{I_0(b)} I_0(b').
\]
However, we have
\[
I(\nabla^B_{j(b)} b') = I_0 (\nabla^B_{j(b)} b') = \left(\Delta_{\nabla^B_{j(b)} b'}, j(\nabla^B_{j(b)} b') \circ \pi \right).
\]
It is now easy to see that
\[
\Delta_{\nabla^B_{j(b)} b'} = \nabla^{\mathsf{Bott}}_{\nabla^B_{j(b)}b'} = \nabla^A_{\nabla^L_{j(b)}j(b')},
\]
where we used the fact
that $\nabla^A$ extends the Bott $B$-connection
on $A$
and $\nabla^L$ reduces to $\nabla^B$ when being
restricted to $B$ (see
Lemma~\ref{lem:FRA}).
Similarly, we also have
\[
j(\nabla^B_{j(b)} b') = \nabla^L_{j(b)}j(b').
\]
Hence
\[
I(\nabla^B_{j(b)} b') = \left( \nabla^A_{\nabla^L_{j(b)}j(b')}, \  (\nabla^L_{j(b)}j(b')) \circ \pi \right) = \nabla_{(\nabla^A_{j(b)}, j(b) \circ \pi)} \left(\nabla^A_{j(b')}, j(b') \circ \pi \right),
\]
where, in the second step, we used Lemma~\ref{eq:BOS}.
Using again the fact that $\nabla^A$ extends
the Bott $B$-connection on $A$ and agrees with $\Delta_b$,
we have
\[
I(\nabla^B_{j(b)} b') = \nabla_{(\Delta_b, j(b) \circ \pi)} \left(\Delta_{b'}, j(b') \circ \pi \right) = \nabla_{I_0(b)}I_0(b')
\]
as desired. Therefore the diagram \eqref{diag:I_pbw} commutes.
It follows that, for any $b_1,\cdots,b_n\in\Gamma(B)$,
\[
(H_S^{\mathsf{pbw}} \circ I)(\mathsf{pbw} (b_1 \odot \cdots \odot b_n)) = (H_S^{\mathsf{pbw}} \circ \mathsf{PBW})I_S (b_1 \odot \cdots \odot b_n) = (\mathsf{PBW} \circ H_S \circ I_S) (b_1 \odot \cdots \odot b_n) = 0,
\]
where we used the fact that $H_S \circ I_S = 0$.
Thus we conclude that $H_S^{\mathsf{pbw}} \circ I = 0$.

In summary, we have proved that
$I : \Big(\Gamma(\Lambda^\bullet A^\vee)\otimes_R\cU (B),\dA\Big)
\to \Big(\cU(\pi^! L),D_\cU\Big)$ is a dga morphism such that
$P_{\cU} \circ I = \operatorname{id}$ and $H_S^{\mathsf{pbw}} \circ I = 0$.
Then, from~\eqref{eq:H^hp}, it follows that $H_S^{\mathsf{hp}} \circ I = 0$.
Moreover, we have
\[ I^{\mathsf{hp}}=I^{\mathsf{hp}}\circ P^{\mathsf{hp}} \circ I=
(\operatorname{id} - [H^{\mathsf{hp}}, D_{\cU}])\circ I
=I-H^{\mathsf{hp}}\circ D_{\cU}\circ I
=I-H^{\mathsf{hp}} \circ I \circ d_A
=I ,\]
where we also used the fact that $P^{\mathsf{hp}}_S = P_\cU$. The conclusion
 now follows from Lemma~\ref{lem:ht_dga}.
This concludes the proof of Theorem~\ref{theor:A_infty_MP}.
\end{proof}

\begin{remark}
Let $\mathfrak g$ be a finite dimensional Lie algebra, and let $M$ be a $\mathfrak g$-manifold. When $(A,B)$ is the matched pair $(\mathfrak g \ltimes M, TM)$ from Example \ref{exmpl:action}, then $\mathcal U_{L/A} \cong \mathcal U (B) = \mathcal U (TM)$ consists of differential operators on $M$, and $\Gamma (\Lambda^\bullet A^\vee) \otimes_R \mathcal U (B) \cong \Lambda^\bullet \mathfrak g^\vee \otimes \mathcal U (TM)$ is the cochain complex coming from the action of $\mathfrak g$ on differential operators. The $0$-th cohomology $H^0 (\Lambda^\bullet \mathfrak g^\vee \otimes \mathcal U (TM))$ consists of $\mathfrak g$-invariant differential operators, and it inherits an associative algebra structure from the dga structure on $\Lambda^\bullet \mathfrak g^\vee \otimes \mathcal U (TM)$ which actually agrees the usual associative algebra structure of (invariant) differential operators.
\end{remark}

\begin{remark}
Let $X$ be a complex manifold. When $(A, B)$ is the matched pair $(T^{0,1}X, T^{1,0}X)$ from Example \ref{exmpl:compl_man}, then $\mathcal U_{L/A} \cong \mathcal U (B) = \mathcal U (T^{1,0} X)$ consists of sections of the holomorphic vector bundle of differential operators on $X$, and $\Gamma (\Lambda^\bullet A^\vee) \otimes_R \mathcal U (B) = \Omega^{0,\bullet} (X) \otimes \mathcal U (T^{1,0} X)$ is the associated Dolbeault-like cochain complex. The $0$-cohomology $H^0 (\Omega^{0,\bullet} (X) \otimes \mathcal U (T^{1,0} X))$ consists of holomorphic differential operators on $X$, and it inherits an associative algebra structure from the dga structure on $\Omega^{0,\bullet} (X) \otimes \mathcal U (T^{1,0} X)$ which agrees with the usual associative algebra structure of (holomorphic) differential operators.
\end{remark}

\appendix

\section{Dg Lie algebroids associated with Lie algebroid morphisms}
\label{Luca}

The dg Lie algebroid $\pi^!L\xto{\breve{\varpi}}A[1]$
constructed from the morphism of Lie algebroids $\phi:A\to L$
in Section~\ref{doudou} is closely related to
the comma double Lie algebroid \cite{MR4322148},
which is the infinitesimal counterpart
of the comma double Lie groupoid \cite{MR1170713}.

\subsection{The double Lie groupoid \texorpdfstring{$\groupoid{D}$}{D} arising
from a morphism of Lie groupoids \texorpdfstring{$\varphi$}{ϕ}}

We recall a classical construction of double Lie groupoids, \emph{comma
double Lie groupoids},
due to Brown-Mackenzie \cite[Example~1.8]{MR1170713}
(see also \cite[Example~2.5]{MR1174393}) associated to a morphism of
Lie groupoids with the same base manifold.

Let $\varphi$ be a morphism from a Lie groupoid $\groupoid{A}\toto M$
to a Lie groupoid $\groupoid{L}\toto M$:
\[ \begin{tikzcd}
\groupoid{A} \arrow[d, shift left, color=gray] \arrow[d, shift right, color=gray]
\arrow[r, "\varphi"] &
\groupoid{L} \arrow[d, shift left, color=gray] \arrow[d, shift right, color=gray] \\
{\color{gray}M} \arrow[r, "\id_M"', color=gray] & {\color{gray}M}
\end{tikzcd} \]

Let $\groupoid{D}$ be the colimit (in the category of smooth manifolds) of the diagram
\[ \begin{tikzcd} \groupoid{A} \arrow[rd, "s"'] & & \groupoid{L} \arrow[ld, "s"]
\arrow[rd, "t"'] && \groupoid{A} \arrow[ld, "s"] \\
& M & & M & \end{tikzcd} \]
where the symbols $s$ and $t$ denote the source and target maps of the two groupoids at play.

We can think of an element in $\groupoid{D}$
as a triple $(a,\lambda,b)$ with $\lambda\in\groupoid{L}$ and $a,b\in\groupoid{A}$
such that $s(a)=s(\lambda)$ and $t(\lambda)=s(b)$ or
as a square diagram
\[ \begin{tikzcd}
s(a)=s(\lambda) \arrow[d, "a"'] \arrow[r, dashed, "\lambda"] & t(\lambda)=s(b) \arrow[d, "b"] \\
t(a) \arrow[r, dashed, gray, "\varphi(a^{-1})\cdot \lambda\cdot\varphi(b)"'] & t(b)
\end{tikzcd} \]
with horizontal sides labeled by elements of $\groupoid{L}$,
vertical sides labeled by elements of $\groupoid{A}$, and vertices labeled by points of $M$.

We define two partial multiplications on $\groupoid{D}$,
which we call horizontal composition ($\boxvert$):
\[ \begin{tikzcd}
\bullet \arrow[d, "a"'] \arrow[r, dashed, "\lambda"] & \bullet \arrow[d, "b"] \\
\bullet \arrow[r, dashed, gray, "\varphi(a^{-1})\cdot\lambda\cdot\varphi(b)"'] & \bullet
\end{tikzcd}
\quad\boxvert\quad
\begin{tikzcd}
\bullet \arrow[d, "b"'] \arrow[r, dashed, "\lambda'"] & \bullet \arrow[d, "c"] \\
\bullet \arrow[r, dashed, gray, "\varphi({b}^{-1})\cdot\lambda'\cdot\varphi(c)"'] & \bullet
\end{tikzcd}
\quad=\quad
\begin{tikzcd}
\bullet \arrow[d, "a"'] \arrow[r, dashed, "\lambda\cdot\lambda'"] & \bullet \arrow[d, "c"] \\
\bullet \arrow[r, dashed, gray, "\varphi({a}^{-1})\cdot\lambda\cdot\lambda'\cdot\varphi(c)"']
& \bullet
\end{tikzcd} \]
and vertical composition ($\boxminus$):
\[ \begin{tikzcd}
\bullet \arrow[d, "a"'] \arrow[r, dashed, "\lambda"] & \bullet \arrow[d, "b"] \\
\bullet \arrow[r, dashed, gray, "\varphi(a^{-1})\cdot\lambda\cdot\varphi(b)"'] & \bullet
\end{tikzcd}
\quad\boxminus\quad
\begin{tikzcd}
\bullet \arrow[d, "a'"'] \arrow[r, dashed, "\varphi(a^{-1})\cdot\lambda\cdot\varphi(b)"]
& \bullet \arrow[d, "b'"] \\
\bullet \arrow[r, dashed, gray,
"\varphi({a'}^{-1}\cdot a^{-1})\cdot\lambda\cdot\varphi(b\cdot b')"'] & \bullet
\end{tikzcd}
\quad=\quad
\begin{tikzcd}
\bullet \arrow[d, "a\cdot a'"'] \arrow[r, dashed, "\lambda"] & \bullet \arrow[d, "b\cdot b'"] \\
\bullet \arrow[r, dashed, gray,
"\varphi({a'}^{-1}\cdot a^{-1})\cdot\lambda\cdot\varphi(b\cdot b')"'] & \bullet
\end{tikzcd} \]

These two partial multiplications are compatible in the sense that,
for every pattern
\[ \begin{tikzcd}
\bullet \arrow[d] \arrow[r, dashed] \arrow[rd, phantom, "e" description] &
\bullet \arrow[d] \arrow[r, dashed] \arrow[rd, phantom, "\beta" description] &
\bullet \arrow[d] \\
\bullet \arrow[d] \arrow[r, dashed] \arrow[rd, phantom, "\gamma" description] &
\bullet \arrow[d] \arrow[r, dashed] \arrow[rd, phantom, "\delta" description] &
\bullet \arrow[d] \\
\bullet \arrow[r, dashed] & \bullet \arrow[r, dashed] & \bullet
\end{tikzcd} \]
in $\groupoid{D}$, we have
\[ (e\boxvert\beta)\boxminus(\gamma\boxvert\delta)
=(e\boxminus\gamma)\boxvert(\beta\boxminus\delta) .\]

The horizontal composition $\boxvert$ makes $\groupoid{D}$ a Lie groupoid
over $\groupoid{A}$ with source map
$(a,\lambda,b)\mapsto a$ and target map $(a,\lambda,b)\mapsto b$,
while the vertical composition $\boxminus$ makes $\groupoid{D}$ a Lie groupoid
over $\groupoid{L}$ with source map
$(a,\lambda,b)\mapsto\lambda$ and target map
$(a,\lambda,b)\mapsto\varphi(a^{-1})\cdot\lambda\cdot\varphi(b)$.

Indeed, we obtain a double Lie groupoid structure on $\groupoid{D}$
\cite{MR1170713,MR1174393,MR1780095,MR1650045}:
\begin{equation}\label{eq:doublegroupoid}
\begin{tikzcd}
\groupoid{D}
\arrow[r, shift left, "u"] \arrow[r, shift right, "d"']
\arrow[d, shift left, "r"] \arrow[d, shift right, "l"'] &
\groupoid{L}
\arrow[d, shift left, "t"] \arrow[d, shift right, "s"'] \\
\groupoid{A}
\arrow[r, shift left, "s"] \arrow[r, shift right, "t"'] & M
\end{tikzcd}
\end{equation}

We summarize the construction above in a more familiar terminology
in the following

\begin{proposition}[{see \cite[Example~2.5]{MR1174393}}]
\begin{enumerate}
\item As a smooth manifold, $\groupoid{D}$ is diffeomorphic to
$\groupoid{A}\times_{s, M, s}\groupoid{L}\times_{t, M, s}\groupoid{A}$.
\item The vertical groupoid $\groupoid{D}\toto \groupoid{A}$
is the pull back groupoid \cite{MR3453885} of
$\groupoid{L}\toto M$ via the surjective submersion $s: \groupoid{A}\to M$.
\item The horizontal groupoid $\groupoid{D}\toto \groupoid{L}$
is the action groupoid \cite{MR3453885} arising from the right action of
the Cartesian product groupoid
$\groupoid{A}\times \groupoid{A}\toto M\times M$
on $\groupoid{L}$ with the momentum map $J=(s, t): \groupoid{L} \to M\times M$,
with the action $\lambda \cdot (a, b)=\varphi(a^{-1})\cdot\lambda\cdot\varphi(b)$.
\end{enumerate}
\end{proposition}

\subsection{The LA groupoid \texorpdfstring{$\cD$}{D}
arising from the double Lie groupoid \texorpdfstring{$\groupoid{D}$}{D}}

There exists a well-known functor from the category of Lie groupoids and their morphisms
to the category of Lie algebroids and their morphisms,
which we call Lie functor.
According to Mackenzie~\cite{MR1174393,MR1780095},
two LA groupoids can be obtained from a double Lie groupoid
by applying the Lie functor horizontally and vertically, respectively.
Now let us apply the Lie functor to the double Lie groupoid
\eqref{eq:doublegroupoid} horizontally.
Applying the Lie functor to the Lie groupoid $\groupoid{A}\toto M$,
we obtain a Lie algebroid $A\xto{\pi}M$ with anchor $\rho:A\to T_M$.
Likewise, applying the Lie functor to the Lie groupoid $\groupoid{L}\toto M$,
we obtain a Lie algebroid $L\xto{\varpi}M$ with anchor $\varrho:L\to T_M$.
Furthermore, applying the Lie functor to the morphism of Lie groupoids
$\varphi:\groupoid{A}\to\groupoid{L}$, we obtain a morphism of Lie algebroids
$\phi:A\to L$.

Let $\cD$ be the colimit (in the category of smooth manifolds) of the diagram
\[ \begin{tikzcd} A \arrow[rd, "\pi"'] & & \groupoid{L} \arrow[ld, "s"]
\arrow[rd, "t"'] & & A \arrow[ld, "\pi"] \\ & M & & M & \end{tikzcd} \]

We can think of a point in $\cD$ as a triple $(v,\lambda,w)$
with $\lambda\in\groupoid{L}$ and $v,w\in A$ such that $\pi(v)=s(\lambda)$
and $t(\lambda)=\pi(w)$.

Applying the Lie functor to the Lie groupoid $\groupoid{D}\toto\groupoid{L}$
with multiplication $\boxminus$,
we obtain a Lie algebroid structure on $\cD$
with $\cD\xto{p_2}\groupoid{L}$ as underlying vector bundle ---
the projection $p_2$ maps $(v,\lambda,w)$ to $\lambda$.
Its anchor is the map
\begin{equation}\label{eq:Rome}
\cD\ni(v,\lambda,w)\longmapsto
L_{\lambda*}\big(\phi(w)\big) - R_{\lambda*}\big(\phi(v)\big)\in T_{\groupoid{L}} .
\end{equation}
Each pair of sections $v,w\in\Gamma(A\xto{\pi}M)$ determines a section
\[ \groupoid{L}\ni\lambda\longmapsto
\big(v\circ s(\lambda),\lambda,w\circ t(\lambda)\big)\in\cD \]
of $\cD\xto{p_2}\groupoid{L}$, which we denote
$(v\circ s,\id_{\groupoid{L}},w\circ t)$.
The Lie bracket on $\Gamma(\cD\xto{p_2}\groupoid{L})$
is determined by the Lie bracket on $\Gamma(A\xto{\pi}M)$
through the relation
\[ \big[(v\circ s,\id_{\groupoid{L}},w\circ t),
(v'\circ s,\id_{\groupoid{L}},w'\circ t)\big]
=\big(\lie{v}{v'}\circ s,\id_{\groupoid{L}},\lie{w}{w'}\circ t\big) .\]

On the other hand, the partial multiplication
\[ (u,\lambda,v)\boxvert(v,\lambda',w)=(u,\lambda\cdot\lambda',w) \]
on $\cD$ makes it a Lie groupoid over $A$ with
the projection $(v,\lambda,w)\mapsto v$ as its source map $p_1$
and the projection $(v,\lambda,w)\mapsto w$ as its target map $p_3$.

Indeed, $\cD$ admits an LA-groupoid structure
in the sense of Mackenzie~\cite{MR1174393}:
\[ \begin{tikzcd}
\cD \arrow[d, shift left, "p_1"] \arrow[d, shift right, "p_3"'] \arrow[r, "p_2"] &
\groupoid{L} \arrow[d, shift left, "s"] \arrow[d, shift right, "t"'] \\
A \arrow[r, "\pi"'] & M
\end{tikzcd} \]

In summary, as a smooth manifold, $\cD$ is diffeomorphic to
$A\times_{M, s}\groupoid{L}\times_{t, M}A$.
The vertical groupoid $\cD\toto A$
is the pull back groupoid \cite{MR3453885} of
$\groupoid{L}\toto M$ via the surjective submersion $\pi: A\to M$.
The horizontal Lie algebroid $\cD\to\groupoid{L}$
is the action Lie algebroid \cite{MR3453885} arising from the right action of
the Cartesian product Lie algebroid
$A\times A\to M\times M$
on $\groupoid{L}$ given by~\eqref{eq:Rome}.

Similarly, one obtains a second LA-groupoid by applying
the Lie functor to the vertical Lie groupoids on~\eqref{eq:doublegroupoid}:
\begin{equation}\label{eq:LA2}
\begin{tikzcd}
\mathcal{D'} \arrow[r, shift left, "p_1"] \arrow[r, shift right, "p_3"'] \arrow[d, "p_2"'] &
L \arrow[d, "\varpi"] \\
\groupoid{A} \arrow[r, shift left, "s"] \arrow[r, shift right, "t"'] & M
\end{tikzcd}
\end{equation}

As a manifold $\mathcal{D'}$ is diffeomorphic to
$T_\groupoid{A}\times_{s_*, T_M}{L}$.
The horizontal groupoid $\mathcal{D'}\toto L$
is the action groupoid \cite{MR3453885} corresponding to the right action
of the tangent Lie groupoid $T_\groupoid{A}\toto T_M$
on $L$ with the momentum map $\varrho: L\to T_M$ ---
see~\cite[Example~4.7]{MR1174393}.
The vertical Lie algebroid $\mathcal{D'}\to \groupoid{A}$
is the pull-back Lie algebroid \cite{MR3453885} of $L$
via the surjective submersion $s: \groupoid{A} \to M$.
See~\cite[Example~4.7]{MR1174393} for more details.

\subsection{The double Lie algebroid \texorpdfstring{$D$}{D} arising from the LA groupoid \texorpdfstring{$\cD$}{D}}

Note that the Lie groupoid $\cD\toto A$ is the colimit
(in the category of Lie groupoids and their morphisms) of the diagram
\begin{equation}\label{eq:Dessel} \begin{tikzcd}
\groupoid{L} \arrow[rd, "{(s,t)}"] \arrow[dd, shift left, "t", color=gray]
\arrow[dd, shift right, "s"', color=gray] &
& A\times A \arrow[ld, "\pi\times\pi"'] \arrow[dd, shift left, "p_2", color=gray]
\arrow[dd, shift right, "p_1"', color=gray] \\
& M\times M \arrow[dd, shift left, "p_2", color=gray]
\arrow[dd, shift right, "p_1"', color=gray] & \\
{\color{gray}M} \arrow[rd, "\id_M"', color=gray] & & {\color{gray}A}
\arrow[ld, "\pi", color=gray] \\
& {\color{gray}M} &
\end{tikzcd} \end{equation}

Consider the diagram in the category of Lie algebroids
\[ \begin{tikzcd} L \arrow[rd, "\varrho"'] && T_A \arrow[ld, "\pi_*"] \\
& T_M & \end{tikzcd} \]
obtained by applying the Lie functor to Diagram~\eqref{eq:Dessel}.
Its colimit is the Lie algebroid $D\xto{\breve{\varpi}}A$, the image
of the Lie groupoid $\cD\toto A$ (with multiplication $\boxvert$)
under the Lie functor.
We can think of an element in $D$ as a pair $(X, v)$ with $X\in T_A$
and $v\in L$ such that $\varrho(v)=\pi_*(X)$.
The projection $\breve{\varpi}$ is the canonical map $D\to T_A\to A$.

The anchor of this Lie algebroid structure on $D$ is the map
\[ D\ni(v,X)\longmapsto X\in T_A .\]
Each pair $(v,X)$ consisting of
(1) a section $v$ of the Lie algebroid $L\xto{\varpi}M$
and (2) a vector field $X\in\XX(A)$ that is $\pi_*$-projectable
onto the vector field $\varrho(v)\in\XX(M)$ determines a section
\[ A\ni a\longmapsto (X_a, v\circ\pi(a))\in D \]
of the Lie algebroid $D\xto{\breve{\varpi}}A$,
which we denote $(X,v\circ\pi)$.
The Lie bracket on $\Gamma(D\xto{\breve{\varpi}}A)$ is determined by the
Lie bracket on $\Gamma(L\xto{\varpi}M)$ through the relation
\[ \big[(X, v\circ\pi),( X', v'\circ\pi)\big]=
\big( \lie{X}{X'}, \lie{v}{v'}\circ\pi\big) .\]
We note that the Lie algebroid $D\xto{\breve{\varpi}}A$ is the Lie algebroid pullback
--- see~\cite{MR2157566,MR4322148} --- of the Lie algebroid $L\xto{\varpi}M$
through the surjective submersion $\pi:A\to M$.

On the other hand, $D$ inherits a second Lie algebroid structure
by applying the Lie functor to the
horizontal Lie groupoid $\mathcal{D'}\toto {L}$ in~\eqref{eq:LA2}.
It is the action Lie algebroid corresponding to the
right infinitesimal action of the tangent Lie
algebroid $T_A\to T_M$ on $\varrho_L: L\to T_M$
--- see~\cite[Section~4.1]{MR4322148}.

To define the action, note that $\Gamma(T_M,T_A)$ is generated,
over $C^\infty(T_M)$, by two types of sections: core sections $\Hat{X}$
and tangent sections $TX$, for all $X\in\Gamma(A)$
\cite{MR1262213,MR1617335}.

Recall that the core section $\Hat{X}\in\Gamma(T_M,T_A)$,
for any $X\in\Gamma(A)$, is defined as the map
\[ \Hat{X}: T_M\to T_A,
\quad \Hat{X} (v_m)=v_m +X|_m \in T_M|_m \oplus A|_m \cong T_A|_{0_m},
\quad \forall v_m\in T_M|_m .\]
Indeed their brackets completely determine the Lie bracket
on $\Gamma(T_M,T_A)$ \cite[Equation~(27)]{MR1262213}:
\[ \lie{TX}{TY}=T(\lie{X}{Y}) ,\qquad \lie{TX}{\Hat{Y}}=\hat{[X, Y]} ,\qquad \lie{\Hat{X}}{\Hat{Y}}=0 ,\]
for all $X,Y\in\Gamma(A)$.

Every section $X\in\Gamma(A)$ determines two vector fields on $A$:
the vertical lift vector field $X^\upa$
and the morphic vector field $\Tilde{X}$,
which are defined respectively by
\begin{align}
X^\upa(f\circ \pi) &= 0, &
X^\upa(\ell_\xi) &= \langle\xi, X\rangle\circ \pi , \label{eq:Jotz1} \\
\Tilde{X}(f\circ \pi) &= \rho (X)(f)\circ \pi, &
\Tilde{X}(\ell_\xi) &= \ell_{L_X(\xi)}, \label{eq:Jotz2}
\end{align}
for $f\in\cinf(M)$ and $\xi\in\Gamma(A^\vee)$ --- see~\cite{MR1617335}.
Here $\ell_\xi\in\cinf(A)$ is the fiberwise linear function determined by $\xi$.

Define $\Phi: \Gamma (T_M, T_A) \to \XX(L)$ by
\begin{equation}\label{eq:Jotz}
\Phi (TX)=\TTilde{\phi(X)}, \qquad\qquad \Phi (\Hat{X})=\phi(X)^\upa
\end{equation}
for any $X\in\Gamma(A)$.

One proves that Equation~\eqref{eq:Jotz} defines uniquely
an action of the tangent Lie algebroid $T_A\to T_M$
on $\varrho: L\to T_M$ \cite{MR4322148}. Thus
one can form the transformation Lie algebroid
$D=T_A \times_{\pi_*,T_M, \varrho} L \xto{\breve{\pi}}L$.

\begin{proposition}[\cite{MR4322148}]
\label{pro:Pune}
The double vector bundle $D$, equipped with the
Lie algebroid structures described above 
is a double Lie algebroid:
\begin{equation}\label{eq:Pune} \begin{tikzcd}
D \arrow[r, "\breve{\pi}"] \arrow[d, "\breve{\varpi}"'] & L \arrow[d, "\varpi"] \\
A \arrow[r, "\pi"'] & M
\end{tikzcd} \end{equation}
\end{proposition}
Such a double Lie algebroid is often called
\emph{comma double Lie algebroid} --- see~\cite{MR4322148}.
A similar construction is found in~\cite{MR4421028}.

\begin{remark}
Note that comma double Lie algebroids described above
always exist for any Lie algebroid  morphism $\phi: A\to L$ 
over a field $\KK$ of characteristic zero.
When $\KK$ is the real $\RR$, any Lie algebroid
morphism $\phi: A\to L$ corresponds to
a morphism of local Lie groupoids, so
the comma double Lie algebroid can be obtained
as the infinitesimal of a comma double local Lie groupoid.
However, when $\KK$ is the complex $\CC$, the groupoid
picture no longer exist. But there always exists
a comma double Lie algebroid, and 
all the structure maps of this comma double Lie algebroid
can be explicitly described in terms of
Lie algebroid data as discussed above.
\end{remark}

\subsection{The dg Lie algebroid \texorpdfstring{$D_{\breve{\pi}}[1]$}{Dpibreve[1]}
arising from the double Lie algebroid \texorpdfstring{$D$}{D}}

One important class of dg Lie algebroids arise
from Mackenzie's \emph{double Lie algebroids}
\cite{arXiv:math/9808081,MR1650045},
the infinitesimal counterparts of double Lie groupoids.
The following theorem is essentially due to Voronov~\cite{MR2971727}
and was reformulated in terms of representations
up to homotopy in~\cite{MR3802797}.

\begin{theorem}\label{Manaus}
A double vector bundle structure
\begin{equation}\label{eq:Pune1} \begin{tikzcd}
D \arrow[r, "\breve{\pi}"] \arrow[d, "\breve{\varpi}"'] & L \arrow[d, "\varpi"] \\
A \arrow[r, "\pi"'] & M
\end{tikzcd} \end{equation}
can be upgraded to a double Lie algebroid
in the sense of Mackenzie~\cite{MR1174393,MR1780095,MR1650045}
if and only if $D_{\breve{\pi}}[1]\to A_\pi [1]$ is a dg Lie algebroid.
Here $D_{\breve{\pi}}[1]$ and $A_{\pi}[1]$ denote the graded manifolds obtained by shifting the degree
of the fiberwisely linear functions on the total spaces $D$ and $A$
of the vector bundles $D\xto{\breve{\pi}}L$ and $A\xto{\pi}M$.
\end{theorem}

We are now ready to state the main result of this Appendix.

\begin{theorem}
\label{thm:Cortona}
Let $A \xto{} M$ and $L \xto{} M$ be Lie algebroids over $\KK$
with the same base manifold $M$. Let $\phi:A\to L$ be a Lie algebroid
morphism over the identity map $\id_M:M\to M$.
The dg Lie algebroid corresponding to the comma double Lie
algebroid \eqref{eq:Pune} in Proposition~\ref{pro:Pune} coincides with the one in Theorem~\ref{Zug}.
\end{theorem}

To prove Theorem~\ref{thm:Cortona}, note that
the Chevalley--Eilenberg differential for the Lie algebroid
$A\xto{\pi}M$ (with anchor $\rho:A\to T_M$)
is a homological vector field $Q=\dA$ on the $\ZZ$-graded manifold $A_{\pi}[1]$.
The algebra of functions on $A_{\pi}[1]$ is generated by
$\pi^*\big(C^{\infty}(M)\big)$ in degree $0$
and the $\pi$-fiberwisely linear functions on $A$, which are assigned degree $1$.
Hence $(A_{\pi}[1],Q)$ is a dg manifold.

Likewise, the Chevalley--Eilenberg differential for the Lie algebroid
$D\xto{\breve{\pi}}L$ (with anchor $\breve{\rho}:D\to T_L$)
is a homological vector field $\breve{Q}$ on the $\ZZ$-graded manifold $D_{\breve{\pi}}[1]$.
The algebra of functions on $D_{\breve{\pi}}[1]$ is generated by
$\breve{\pi}^*\big(C^{\infty}(L)\big)$ in degree $0$
and the $\breve{\pi}$-fiberwisely linear functions on $D$, which are assigned degree $1$.
Hence $(D_{\breve{\pi}}[1],\breve{Q})$ is a dg manifold.

As a consequence of Proposition~\ref{pro:Pune} and Theorem~\ref{Manaus},
we obtain a dg vector bundle
\begin{equation}\label{eq:Hull} \begin{tikzcd}
(D_{\breve{\pi}}[1],\breve{Q}) \arrow[d, "\breve{\varpi}"] \\ (A_{\pi}[1],Q)
\end{tikzcd} \end{equation}

The homological vector fields $\breve{Q}$ and $Q$ induce an operator $\cQ$ of degree $+1$
on $\Gamma(D_{\breve{\pi}}[1]\xto{\breve{\varpi}}A_{\pi}[1])$ satisfying $\cQ^2=0$
through the relation
\begin{equation}
\label{eq:SKG}
Q\big(\duality{\varepsilon}{e}\big)
=\duality{\breve{Q}(\varepsilon)}{e}+(-1)^{\degree{\varepsilon}}
\duality{\varepsilon}{\cQ(e)}
\end{equation}
for $e\in\Gamma(D_{\breve{\pi}}[1]\xto{\breve{\varpi}}A_{\pi}[1])$ and
$\varepsilon\in\Gamma((D_{\breve{\pi}}[1])^\vee
\xto{\breve{\varpi}}A_{\pi}[1])\subset C^{\infty}_{\text{linear}}(D_{\breve{\pi}}[1])$.
Note that $\breve{Q}$ is a linear vector field \cite{MR2709144}
with respect to the vector bundle $D_{\breve{\pi}}[1]\xto{\breve{\varpi}}A_{\pi}[1]$.

For all $f\in C^\infty(A_{\pi}[1])$ and
$e\in\Gamma(D_{\breve{\pi}}[1]\xto{\breve{\varpi}}A_{\pi}[1])$, we have
\[ \cQ(f\cdot e)=Q(f)\cdot e+(-1)^{\degree{f}}f\cdot\cQ(e) .\]
Thus, $\big(\Gamma(D_{\breve{\pi}}[1]\xto{\breve{\varpi}}A_{\pi}[1]),\cQ\big)$
is a dg module over the \dga\ $\big(C^\infty(A_{\pi}[1]),Q\big)$.

The double Lie algebroid structure in 
Proposition~\ref{pro:Pune} guarantees that the Lie algebroid structure on~\eqref{eq:Hull}
is compatible with the dg vector bundle structure
encoded in the homological vector field $Q=\dA$ and the operator $\cQ$.
Therefore, the morphism of Lie algebroids $\phi:A\to L$ 
determines a dg Lie algebroid structure on $D_{\breve{\pi}}[1]\xto{\breve{\varpi}}A_\pi[1]$.
It is much easier to describe $\cQ$ than $\breve{Q}$.
Since the graded manifold $D_{\breve{\pi}}[1]$ is the fibered product
\[ \begin{tikzcd} D_{\breve{\pi}}[1] \arrow[d, dashed] \arrow[r, dashed]
& T_{A_{\pi}[1]} \arrow[d, "\pi_*"] \\
L \arrow[r, "\varrho"'] & T_M \end{tikzcd} ,\]
a section $s$ of the Lie algebroid
\begin{equation}\label{eq:Newcastle} \begin{tikzcd}
D_{\breve{\pi}}[1] \arrow[d, "\breve{\varpi}"'] \\
A_{\pi}[1] \arrow[u, bend right, color=gray, "s_\phi"']
\end{tikzcd} \end{equation}
is a pair $s=(X,\mu)$ consisting of a map $\mu:A_{\pi}[1]\to L$
and a vector field $X\in\XX(A_{\pi}[1])$ such that
$\varrho\circ\mu=\pi_*\circ X$.
The Lie algebroid \eqref{eq:Newcastle} admits a canonical section:
\[ s_{\phi}=( Q=\dA, A_{\varpi}[1]\to A\xto{\phi}L) .\]

Thus to prove Theorem~\ref{thm:Cortona},
it suffices to prove the following

\begin{proposition}
\label{pro:Naples}
The operator $\cQ$ acting on sections of the dg vector bundle \eqref{eq:Hull}
admits the following very neat description:
\begin{equation}
\label{eq:Porto}
\cQ(s)=\lie{s_\phi}{s} ,
\end{equation}
where $\lie{\argument}{\argument}$ denotes the Lie bracket
on sections of the Lie algebroid \eqref{eq:Newcastle}.
\end{proposition}

\subsection{Proof of Proposition~\ref{pro:Naples}}

This subsection is devoted to the proof of
Proposition~\ref{pro:Naples}.
Choose local coordinates $(x^1,\cdots,x^n)$ on $M$,
a local frame $(e_1,\cdots,e_m)$ for $A$, and a local frame
$(f_1, \cdots, f_t)$ for $L$.
They induce a local chart $(x^1,\cdots,x^n; \zeta^1, \cdots,
\zeta^t; \xi^1,\cdots,\xi^m, \eta^1, \cdots, \eta^m)$ on
\begin{equation}
\label{eq:DLAA1}
D_{\breve{\pi}}[1] \xto{\cong} L\times_M A[1] \times_M A[1], 
\end{equation}
where the middle component is the base of the vector bundle
$D_{\breve{\pi}}[1]\to A[1]$.

Recall that under this local chart, $d_A$ is
given by Equation~\eqref{eq:dA}, while $X_i\in\XX(A[1])$
is given by Equation~\eqref{eq:Xi}.
By $\ef{i}$, $i=1,\cdots,t$, and $\ee{i}$, $i=1,\cdots,m$,
we denote the (local) section of $D_{\breve{\pi}}[1]\to A[1]$ 
corresponding to the pullback sections $f_i \in \Gamma (L)$
and $e_i \in \Gamma (A[1])$, respectively,
under the identification \eqref{eq:DLAA1}.
Here, by abuse of notation, we use the same symbol
to denote a section of $A$ and the corresponding section of $A[1]$.

We need a few lemmas first.
The following lemma can be easily proved by a direct 
verification using Equation~\eqref{eq:sphi}.

\begin{lemma}
\label{lem:A5}
We have
\begin{enumerate}
\item $\lie{s_{\boldphi}}{\ef{l}}
= \sum_i \xi^i \big(\lie{X_i}{\rho (f_l)}, \ \lie{\phi(e_i)}{f_l}\circ\pi \big), \ \ \ \forall l=1, \cdots , t;$
\item
$\lie{s_{\boldphi}}{\ee{l}}
= \sum_i \xi^i (\lie{X_i}{e_l^{\uparrow}}, 0) -
\sum_i \pairing{\xi^i}{e_l}(X_i, \phi(e_i)\circ\pi), \ \ \ \forall l=1, \cdots , m.$
\end{enumerate}
\end{lemma}

\begin{lemma}
For all $i=1, \cdots, t$, we have
\begin{equation}
\label{eq:Naples}
\breve{Q} (\zeta^i)=
\sum_j \big(\liederivative{\phi (e_j)}\zeta^i) \xi^j+ \duality{\zeta^i}{\phi (e_j)}
\eta^j\big).
\end{equation}
\end{lemma}
\begin{proof}
We have
\begin{align*}
\duality{\breve{Q}(\zeta^i)}{Te_j} &= \Phi(Te_j)(\zeta^i)
&& \text{by Equation~\eqref{eq:Jotz}} \\
&= \TTilde{\phi(e_j)}(\zeta^i)
&& \text{by Equation~\eqref{eq:Jotz2}} \\
&= \liederivative{\phi(e_j)}\zeta^i
\in\Gamma (L^\vee)\cong C^{\infty}_{\text{linear}}(L) . &&
\end{align*}
Here we identify $\Gamma(L^\vee)$ with the space of linear functions on $L$.

On the other hand, according to Equations~\eqref{eq:Jotz1}-\eqref{eq:Jotz}, we have
\[ \duality{\breve{Q}(\zeta^i)}{\Hat{e_j}} = \Phi(\Hat{e_j})(\zeta^i)
= \phi( e_j)^\upa(\zeta^i) = \duality{\zeta^i}{\phi(e_j )} . \]

Equation~\eqref{eq:Naples} thus follows immediately.
\end{proof}

\begin{lemma}
\label{lem:4.7}
For any $i=1,\cdots,m$, we have
\[ \breve{Q} (\eta^i)=
-\half \sum_{s, j, \lambda}\frac{\partial c^{i}_{j \lambda} (x)}{\partial x^s}
\xi^j \xi^{\lambda} y^s
-\sum_{s, \lambda} c_{s \lambda}^i (x)\xi^{\lambda}\eta^s , \]
where $(y^1,\cdots,y^n)$ are the induced vertical local 
coordinates on $T_M$ corresponding to the local
coordinates $(x^1,\cdots,x^n)$ on $M$.
\end{lemma}

We need the following lemma concerning complete (tangent) lift vector fields first.

\begin{lemma}
\label{lem:lift}
Let $\cM$ be a graded manifold with local chart $(z^1,\cdots,z^n)$
and $(z^1, \cdots , z^n; \mu^1, \cdots , \mu^n)$ be
its corresponding coordinates on $T_\cM$.
For any vector field $X=\sum_i V^i (z) \frac{\partial}{\partial z^i}$, we have
\[ \widetilde{X}= X+\sum_{i, s} \frac{\partial V^i(z)}{\partial z^s}
\mu^s \frac{\partial}{\partial \mu^i} .\]
\end{lemma}

\begin{proof}[Proof of Lemma~\ref{lem:4.7}]
By $\widetilde{\dA}$,
we denote the complete (tangent) lift of the homological
vector field $\dA$ on $T_{A[1]}$. By construction, we have
\[ \breve{Q} (\eta^i)=\pr^* ( \widetilde{\dA} (\eta^i)) ,\]
where $\pr: T_{A[1]}\times_{T_M}L\to T_{A[1]}$ is the
natural projection.

Thus, by Lemma~\ref{lem:lift}, we have
\begin{align*}
\breve{Q} (\eta^i) &= \pr^*(\widetilde{\dA}(\eta^i)) \\
&= \sum_i \big( \widetilde{ -\frac{1}{2}\sum_{i,j,k} c^k_{ij}(x) \xi^i\xi^j
\frac{\partial}{\partial \xi^k} } \big) (\eta^i) \\
&= \sum_s \frac{\partial}{ \partial x^s} \big(-\half \sum_{j \lambda}
c_{j \lambda}^i (x) \xi^j \xi^{\lambda}\big) y^s
+\sum_s \frac{\partial}{ \partial \xi^s} \big(-\half \sum_{j \lambda}
c_{j \lambda}^i (x) \xi^j \xi^{\lambda}\big)\eta^s \\
&= -\half \sum_{s, j, \lambda} \frac{\partial c_{j \lambda}^i (x)}{ \partial x^s}
\xi^j \xi^\lambda y^s-\sum_{s \lambda}c_{s\lambda}^i (x) \xi^\lambda\eta^s
.\qedhere\end{align*}
\end{proof}

\begin{proof}[Proof of Proposition~\ref{pro:Naples}]
Let $\breve{Q}$ be the operator as in Equation~\eqref{eq:Porto}.
It suffices to prove that Equation~\eqref{eq:SKG} must hold for
any sections
$e\in\Gamma(D_{\breve{\pi}}[1]\xto{\breve{\varpi}}A_{\pi}[1])$ and
$\varepsilon\in\Gamma((D_{\breve{\pi}}[1])^\vee
\xto{\breve{\varpi}}A_{\pi}[1])$. It is simple to check that,
if Equation~\eqref{eq:SKG} holds for a pair of sections
$(e,\varepsilon)$, it must hold for $(f e, g \varepsilon)$,
where $f,g\in C^\infty(A_{\pi}[1])$ are any functions.
As a consequence, it suffices to prove that
Equation~\eqref{eq:SKG} holds for any local frames of the vector bundles
$D_{\breve{\pi}}[1]\xto{\breve{\varpi}}A_{\pi}[1]$ and
$(D_{\breve{\pi}}[1])^\vee\xto{\breve{\varpi}}A_{\pi}[1]$. 

By Equation~\eqref{eq:Naples}, we have
\begin{equation*}
\pairing{\breve{Q}(\zeta^i)}{\ef{l}}=
\sum_j \pairing{\liederivative{\phi(e_j)}(\zeta^i)}{\ef{l}} \xi^j
=-\sum_j \pairing{\zeta^i}{\lie{\phi(e_i)}{f_l}}\xi^j .
\end{equation*}
On the other hand,
by Lemma~\ref{lem:A5}, we have 
\begin{equation*}
\pairing{\zeta^i}{\cQ (\ef{l})}=
\pairing{\zeta^i}{\lie{s_{\boldphi}}{\ef{l}}}=
\sum_j \xi^j\pairing{\zeta^i}{\lie{\phi(e_i)}{f_l}} .
\end{equation*}
Hence, it follows that
\[ \pairing{\breve{Q}(\zeta^i)}{\ef{l}}+\pairing{\zeta^i}{\cQ (\ef{l})}=0 .\]

Now, according to Lemma~\ref{lem:A5}, we have
\begin{equation*}
\pairing{\zeta^i}{\cQ (\ee{l})}=
\pairing{\zeta^i}{\lie{s_{\boldphi}}{\ee{l}}}
=-\sum_j \pairing{\xi^i}{e_l}\pairing{\zeta^i}{\phi (e_j)}.
\end{equation*}

On the other hand, Equation~\eqref{eq:Naples} implies that
\begin{equation*}
\pairing{\breve{Q}(\zeta^i)}{\ee{l}}= \sum_j
\pairing{\zeta^i}{ \phi (e_j)} \pairing{\xi^i}{ e_l}
=\sum_j \pairing{\xi^i}{e_l}\pairing{\zeta^i}{\phi (e_j)} .
\end{equation*}
Hence, it follow that 
\[ \pairing{\breve{Q}(\zeta^i)}{\ee{l}}+\pairing{\zeta^i}{\cQ (\ee{l})}=0 .\]

According to Lemma~\ref{lem:4.7}, we have
\begin{equation*}
\pairing{\breve{Q}(\eta^i)}{\ef{l}}=
-\half \sum_{ j \lambda} \rho (f_l) (c^{i}_{j \lambda} (x))
\xi^j \xi^{\lambda} .
\end{equation*}

By Lemma~\ref{lem:A5}, we have
\begin{equation*}
\pairing{\eta^i}{\cQ (\ef{l})}=
\pairing{\eta^i}{\lie{s_{\boldphi}}{\ef{l}} }= 
\sum_j \xi^j \lie{X_j}{\rho (f_l)}(\xi^i)
=\half \sum_{ j \lambda} \rho (f_l) (c^{i}_{j \lambda} (x))
\xi^j \xi^{\lambda}. 
\end{equation*}

Hence it follows that
\[ \pairing{\breve{Q}(\eta^i)}{\ef{l}}+\pairing{\eta^i}{\cQ (\ef{l})}=0 .\]

By Lemma~\ref{lem:4.7}, we have
\begin{equation*}
\pairing{\breve{Q}(\eta^i)}{\ee{l}}=
-\sum_j c_{l j}^i(x) \xi^j =\sum_j c_{ jl}^i(x) \xi^j.
\end{equation*}

According to Lemma~\ref{lem:A5}, we have
\begin{align*}
\pairing{\eta^i}{\cQ (\ee{l})}
&= \pairing{\eta^i}{\lie{s_{\boldphi}}{\ee{l}}} \\
&= \sum_j \big(\xi^j \lie{X_j}{e_l^\upa} (\xi^i)
-\pairing{\xi^j}{e_l}X_j (\xi^i)\big) \\
&= -\half \sum_j \xi^j c_{jl}^i (x)- \sum_j \delta_l^j \cdot \sum_\lambda
\half c_{j \lambda}^i (x) \xi^\lambda \\
&= -\sum_j c_{jl}^i (x) \xi^j
.\end{align*}

Hence, it follows that
\[ \pairing{\breve{Q}(\eta^i)}{\ee{l}}+\pairing{\eta^i}{\cQ (\ee{l})}=0 .\]

This concludes the proof of Proposition~\ref{pro:Naples}. 
\end{proof}

\section{A useful formula and its proof}
\label{trenitalia}

Let $A\to M$ be a Lie algebroid.
Let $\interior{a}$ and $\liederivative{a}$ denote the interior product and the Lie derivative w.r.t.\ $a\in\Gamma(A)$ respectively, and let $\dA$ denote the
Chevalley--Eilenberg differential.

Choose any pair of dual (local) frames $\{e_k\}_{k=1,\cdots,n}$ and $\{\xi^k\}_{k=1,\cdots,n}$ for $A$ and $A^\vee$ respectively.
Then $\sum_{k=1}^n \xi^k\cdot\interior{e_k}$ is a (local) derivation
of the graded algebra $\Gamma(\Lambda^\bullet A^\vee)$, i.e.\ a (locally defined)
vector field on the graded manifold $A[1]$.

The purpose of this appendix is to prove the following
useful formula.

\begin{proposition}\label{siccita}
We have
\begin{equation}\label{eq:italo}
\dA=\sum_{k=1}^n \big( \xi_k\cdot\liederivative{e_k}
- (\dA\xi_k)\cdot\interior{e_k} \big)
.\end{equation}
\end{proposition}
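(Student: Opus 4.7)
The plan is to show that both sides of \eqref{eq:italo} are graded derivations of degree $+1$ on $\Gamma(\Lambda^\bullet A^\vee)$ and then verify they agree on a set of generators of this algebra.

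First I would observe that $\dA$ is a degree $+1$ derivation by construction. For the right-hand side, I would invoke the general fact that, in a graded-commutative algebra, the composition $\alpha \cdot D$ of left multiplication by a homogeneous element $\alpha$ of degree $a$ with a graded derivation $D$ of degree $d$ is itself a graded derivation of degree $a+d$; a short sign check using the graded commutativity rule $x\alpha = (-1)^{a|x|}\alpha x$ confirms this. Since $\liederivative{e_k}$ has degree $0$, $\interior{e_k}$ has degree $-1$, $\xi^k$ has degree $1$, and $\dA \xi^k$ has degree $2$, each summand on the right-hand side is a derivation of degree $+1$, and hence so is the full sum.

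Since $\Gamma(\Lambda^\bullet A^\vee)$ is generated as a graded-commutative $\KK$-algebra by $\cirm$ and $\Gamma(A^\vee)$, it suffices to verify \eqref{eq:italo} on these two subspaces. On $f \in \cirm$ the interior product vanishes and $\liederivative{e_k}(f) = \rho(e_k)(f)$, so the right-hand side collapses to $\sum_k \xi^k \cdot \rho(e_k)(f)$, which is precisely the standard local expression for $\dA f$. On $\xi^l \in \Gamma(A^\vee)$ the pairing $\interior{e_k}\xi^l = \delta_k^l$ makes the second sum equal to $\dA \xi^l$. For the first sum, Cartan's magic formula $\liederivative{e_k} = \interior{e_k}\dA + \dA \interior{e_k}$ gives $\liederivative{e_k}\xi^l = \interior{e_k}(\dA \xi^l)$, and then the Euler identity $\sum_k \xi^k \cdot \interior{e_k} = N$ --- with $N$ the form-degree operator on $\Lambda^\bullet A^\vee$ --- yields $\sum_k \xi^k \cdot \liederivative{e_k}(\xi^l) = 2\, \dA \xi^l$. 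Subtracting, the right-hand side evaluates to $\dA \xi^l$, matching the left-hand side.

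There is essentially no substantive obstacle here: once the derivation property of the right-hand side is in hand, the whole identity reduces to Cartan's magic formula together with the Euler-degree identity. The only step requiring a touch of care is the sign verification that $\alpha \cdot D$ remains a graded derivation for $\alpha$ of arbitrary degree, and this is a routine computation in graded-commutative algebra.
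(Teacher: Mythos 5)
Your proof is correct, but it is organized differently from the one in the paper. You establish that the right-hand side is a degree $+1$ graded derivation of $\Gamma(\Lambda^\bullet A^\vee)$ (the sign check that $\alpha\cdot D$ is again a derivation is indeed the only delicate point, and your verification via $x\alpha=(-1)^{a|x|}\alpha x$ is right) and then compare the two sides on the generators $\cirm$ and $\Gamma(A^\vee)$, using Cartan's formula and the Euler identity only on $0$- and $1$-forms. The paper instead never invokes the derivation property of the right-hand side: it applies $\dA$ to the Euler identity $\sum_k\xi^k\cdot\interior{e_k}\omega=p\cdot\omega$ for an \emph{arbitrary} $p$-form $\omega$, uses Cartan's formula inside the resulting expression, applies the Euler identity a second time to the $(p+1)$-form $\dA\omega$, and solves the resulting linear relation for $\dA\omega$. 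The two arguments use the same two ingredients (Cartan's magic formula and the form-degree operator $N=\sum_k\xi^k\cdot\interior{e_k}$); yours is the more standard ``derivations agree on generators'' template and makes the logical structure transparent at the cost of the preliminary sign lemma, while the paper's is a compact direct computation valid for all $\omega$ at once with no auxiliary claim about the right-hand side being a derivation. Both are complete and correct.
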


The following lemma is immediate.

\begin{lemma}\label{rats}
For all $\omega\in\Gamma(\Lambda^p A^\vee)$, we have
\[ \sum_{k=1}^n \xi^k\cdot\interior{e_k} (\omega) = p\cdot\omega .\]
\end{lemma}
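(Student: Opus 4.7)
I would prove Lemma~\ref{rats} by recognizing the operator $E := \sum_{k=1}^n \xi^k \cdot \interior{e_k}$ as the Euler derivation of the graded algebra $\Gamma(\Lambda^\bullet A^\vee)$, i.e.\ as the operator that acts on homogeneous elements by multiplication by the exterior degree. The argument has three short steps: verify that $E$ is a well-defined degree $0$ derivation of $\Gamma(\Lambda^\bullet A^\vee)$; evaluate $E$ on algebra generators in degrees $0$ and $1$; then conclude by the derivation property.

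\textbf{Derivation property.} For homogeneous $\alpha, \beta \in \Gamma(\Lambda^\bullet A^\vee)$ the graded Leibniz rule for the interior product gives
\[ \interior{e_k}(\alpha \wedge \beta) = \interior{e_k}(\alpha) \wedge \beta + (-1)^{|\alpha|} \alpha \wedge \interior{e_k}(\beta) . \]
Wedging from the left with $\xi^k$ and summing over $k$ yields
\[ E(\alpha \wedge \beta) = E(\alpha) \wedge \beta + (-1)^{|\alpha|} \sum_k \xi^k \wedge \alpha \wedge \interior{e_k}(\beta) . \]
Since $\xi^k$ is of degree $1$, commuting it past $\alpha$ produces a sign $(-1)^{|\alpha|}$ that cancels the explicit sign, so $E(\alpha \wedge \beta) = E(\alpha) \wedge \beta + \alpha \wedge E(\beta)$. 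Hence $E$ is a degree $0$ derivation of $\Gamma(\Lambda^\bullet A^\vee)$ (and in particular is globally well-defined and independent of the choice of dual frames).

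\textbf{Evaluation on generators and conclusion.} On $f \in C^\infty(M) = \Gamma(\Lambda^0 A^\vee)$ one has $\interior{e_k}(f) = 0$, so $E(f) = 0 = 0 \cdot f$. On a frame element $\xi^j$ one computes $\interior{e_k}(\xi^j) = \delta_k^j$, whence $E(\xi^j) = \xi^j = 1\cdot \xi^j$. Since $\Gamma(\Lambda^\bullet A^\vee)$ is locally generated as an algebra by $C^\infty(M)$ and $\Gamma(A^\vee)$, a straightforward induction on $p$ using the derivation property of $E$ gives $E(\omega) = p \cdot \omega$ for every $\omega \in \Gamma(\Lambda^p A^\vee)$, which is the desired identity.

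\textbf{Main difficulty.} There is essentially none; this is a classical Euler-type identity. The only mildly subtle point is that the individual summands $\xi^k \cdot \interior{e_k}$ are not themselves derivations, and one must sum them before the sign cancellation exhibited in Step~1 turns the operator into a bona fide derivation. Once that is noted, the lemma reduces to a check on generators.
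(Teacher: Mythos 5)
Your proof is correct. The paper itself offers no argument for Lemma~\ref{rats} --- it is simply declared ``immediate'' --- and your verification that $E=\sum_k\xi^k\cdot\interior{e_k}$ is a degree~$0$ derivation (after summation, with the sign cancellation you point out) that vanishes on $\Gamma(\Lambda^0 A^\vee)$ and acts as the identity on $\Gamma(A^\vee)$ is exactly the standard Euler-vector-field computation that justifies that claim. Nothing is missing.
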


\begin{proof}[Proof of Proposition~\ref{siccita}]
Suppose as above that $\omega\in\Gamma(\Lambda^p A^\vee)$.
It follows from Lemma~\ref{rats} that
\[ \begin{split}
p\cdot \dA\omega &= \sum_k \dA(\xi_k\cdot\interior{e_k}\omega) \\
&= \sum_k \big( (\dA\xi^k)\cdot\interior{e_k}\omega
-\xi_k\cdot \dA(\interior{e_k}\omega) \big) \\
&= \sum_k \big( (\dA\xi^k)\cdot\interior{e_k}\omega
-\xi_k\cdot (\liederivative{e_k}\omega-\interior{e_k}\dA\omega) \big) \\
&= \sum_k \big( (\dA\xi^k)\cdot\interior{e_k}\omega
-\xi_k\cdot \liederivative{e_k}\omega \big)
+\sum_k \xi_k\cdot\interior{e_k}(\dA\omega) \\
&= \sum_k \big( (\dA\xi^k)\cdot\interior{e_k}\omega
-\xi_k\cdot \liederivative{e_k}\omega \big)
+(p+1)\cdot \dA\omega
.\end{split} \]
Hence, we obtain
\[ p\cdot \dA\omega
=\sum_k \big( (\dA\xi^k)\cdot\interior{e_k}\omega
-\xi_k\cdot \liederivative{e_k}\omega \big)
+(p+1)\cdot \dA\omega \]
or
\[ \dA\omega
=\sum_k \big( \xi_k\cdot \liederivative{e_k}\omega
-(\dA\xi^k)\cdot\interior{e_k}\omega \big)
.\qedhere\]
\end{proof}

\section*{Acknowledgments}

We wish to thank
Ruggero Bandiera,
Zhuo Chen,
Madeleine Jotz,
Camille Laurent-Gengoux,
Hsuan-Yi Liao,
Rajan Mehta,
and Maosong Xiang
for inspiring discussions.

We dedicate this paper to the memory of Kirill Mackenzie
whose pioneering study of double structures inspired our present work.
We trust that, as time passes, Kirill's work will receive the
recognition it deserves.

\printbibliography
\end{document}